\documentclass[11pt]{article}

\usepackage[utf8]{inputenc} 
\usepackage[T1]{fontenc}    
\usepackage{url}            
\usepackage{booktabs}       
\usepackage{nicefrac}       
\usepackage{microtype}      
\usepackage{multirow}
\usepackage{array}
\usepackage[noblocks]{authblk}
\usepackage{amssymb,arydshln}
\usepackage[nodayofweek]{datetime}
\usepackage{float}
\usepackage{bm}
\usepackage{algorithm} 
\usepackage{algpseudocode}

\usepackage{tocloft}  
\usepackage[shortlabels]{enumitem}
\setlist[itemize]{itemindent=0ex,parsep=3pt,itemsep=0pt,leftmargin=\parindent,topsep=5pt,labelwidth=0.8em,labelsep=0.7em}
\setlist[enumerate]{label={\arabic*)},itemindent=0ex,parsep=3pt,itemsep=0pt,leftmargin=\parindent,topsep=5pt,labelwidth=0.9em,labelsep=0.6em}

\usepackage[font=small,labelfont=bf]{caption}
\usepackage{amsmath,amsfonts,nccmath,mathtools,mathrsfs}
\usepackage{tcolorbox}
\usepackage{xcolor}

\usepackage[margin=1in]{geometry}

\usepackage{pgfplots}
\usepgfplotslibrary{fillbetween}
\usetikzlibrary{arrows.meta}
\tikzset{>=latex}

\DeclarePairedDelimiterX\setv[2]{\{}{\}}{#1 \;\delimsize\vert\; #2}

\pgfmathdeclarefunction{cubic}{1}{%
  \pgfmathparse{-2*(#1+2)*(#1+2)*(#1-2)+40}%
}

\pgfmathdeclarefunction{bicuadratic}{1}{%
  \pgfmathparse{(#1-1)*(#1-1)*(#1-1)*(#1-1)+10}%
}

\pgfmathdeclarefunction{cuadratic}{1}{%
    \pgfmathparse{(-(2*#1)^2)+70}%
}

\usepackage{pifont}

\usepackage[numbers,merge,sort&compress]{natbib}
\usepackage{amsthm}
\usepackage{graphicx,color}
\usepackage{subcaption,setspace}

\usepackage[colorlinks = true,
            linkcolor = blue,
            urlcolor  = blue,
            citecolor = blue,
            anchorcolor = blue]{hyperref}

\graphicspath{{figure/}}

\newtheorem{theorem}{Theorem}
\newtheorem{fact}{Fact}
\newtheorem{proposition}{Proposition}
\newtheorem{lemma}{Lemma}
\newtheorem{corollary}{Corollary}

\newtheorem{assumption}{Assumption}

\theoremstyle{definition}
\newtheorem{definition}{Definition}
\newtheorem{example}{Example}
\newtheorem{system}{System}
\newtheorem{remark}{Remark}

\usepackage[nameinlink]{cleveref}
\crefname{equation}{}{}
\crefname{theorem}{Theorem}{Theorems}
\crefname{corollary}{Corollary}{Corollaries}
\crefname{example}{Example}{Examples}
\crefname{assumption}{Assumption}{Assumptions}
\crefname{lemma}{Lemma}{Lemmas}
\crefname{proposition}{Proposition}{Propositions}
\crefname{figure}{Figure}{Figures}
\crefname{table}{Table}{Tables}
\crefname{fact}{Fact}{Facts}
\crefname{system}{System}{Systems}
\crefname{conjecture}{Conjecture}{Conjectures}
\crefname{section}{Section}{Sections}
\crefname{appendix}{Appendix}{Appendices}
\Crefname{equation}{}{}
\Crefname{theorem}{Theorem}{Theorems}
\Crefname{corollary}{Corollary}{Corollaries}
\Crefname{example}{Example}{Examples}
\Crefname{lemma}{Lemma}{Lemma}
\Crefname{proposition}{Proposition}{Proposition}
\Crefname{figure}{Figure}{Figures}
\Crefname{table}{Table}{Tables}
\Crefname{section}{Section}{Sections}
\Crefname{appendix}{Appendix}{Appendices}

\newcommand{\tr}{{{\mathsf T}}}

\newcommand{\her}{{{\mathsf H}}}

\newcommand{\argmin}{\mathop{\mathrm{arg\,min}}\limits}

\newcommand{\Hinf}{{\mathcal{H}_\infty}}

\newcommand{\bT}{{\mathbf{T}}}

\newcommand{\diag}{\mathrm{diag}}

\newcommand{\bigO}{\mathcal{O}}

\newcommand{\RR}{\mathbb{R}} 
\newcommand{\innerproduct}[2]{\left \langle #1, #2 \right\rangle }

\graphicspath{{figure/}}

\usepackage{tikz}
\usepackage{pgfplots}
\pgfplotsset{compat=newest}

\makeatletter
\newcommand{\removelatexerror}{\let\@latex@error\@gobble}
\makeatother

\title{\bf \Large 
Weak Convexity and Proximal Bundle Methods for Nonsmooth Policy Optimization in Robust Control\thanks{This work is supported by NSF CAREER 2340713 and NSF CMMI 2320697. Emails: \texttt{y1watanabe@ucsd.edu}; \texttt{fliao@ucsd.edu}; \texttt{zhengy@ucsd.edu}.}
}

\author[1]{Yuto Watanabe} 
\author[1]{Feng-Yi Liao}
\author[1]{Yang Zheng}
\affil[1]{\small Department of Electrical and Computer Engineering, University of California San Diego \vspace{-2mm}}

\date{ \small \today \vspace{-3mm}} 

\begin{document}

\maketitle
\vspace{-5mm}

\begin{abstract}
We study policy optimization for discrete-time robust $\mathcal{H}_\infty$ control with static output-feedback,
and present the first feasibility-preserving algorithm with a deterministic, non-asymptotic complexity guarantee.
This problem naturally leads to a nonsmooth and nonconvex optimization over the set of stabilizing feedback gains. We first establish several structural properties of the $\mathcal{H}_\infty$ cost. In particular, we show that the cost is weakly convex on every convex subset of a sublevel set. For the state-feedback case, we further establish a weak Polyak--{\L}ojasiewicz inequality, which ensures that every stationary point is globally optimal. Building on these properties, we develop a proximal bundle method for $\mathcal{H}_\infty$ policy optimization. The proposed method can be viewed as an implementable approximation of the proximal point method and uses only function value and subgradient information. We show that all iterates remain stabilizing and establish a deterministic non-asymptotic complexity bound of
$\mathcal{O}(\max\{\eta^{-4},\epsilon^{-2}\})$
for finding an $(\eta,\epsilon)$-stationary point. Numerical experiments illustrate our theoretical results.
\end{abstract}


\section{Introduction}\label{section:introduction}

Robustness against unknown disturbances is a central objective in control system design. Among various robust control formulations, $\mathcal{H}_\infty$ control provides a standard framework for designing a stabilizing controller that minimizes the worst-case amplification from exogenous disturbances to outputs \cite{zhou1996robust}. Classical synthesis methods based on Riccati equations and linear matrix inequalities (LMIs) are well established for the centralized setting \cite{doyle1988state,gahinet1994linear}. These methods often do not optimize over controller parameters directly and rely on additional auxiliary variables. Moreover, they generally require explicit model information and may not preserve prescribed controller structures.

Direct policy optimization offers an alternative approach in which the controller parameters are directly optimized without explicitly introducing auxiliary variables.
This approach often achieves favorable empirical performance and has been successfully applied to practical control problems \cite{apkarian2017h}. 
This approach has recently attracted renewed attention \cite{hu2023toward,talebi2024policy},
as it only uses function value and/or (sub)gradient oracles and is more amenable to model-free design than approaches based on LMIs and Riccati equations. 
It is well-known that $\mathcal{H}_\infty$ synthesis is a constrained nonsmooth nonconvex optimization problem in the policy space \cite{apkarian2006nonsmooth,apkarian2009proximity,guo2022global,tang2023global}. The feasible domain consists of stabilizing feedback gains and is generally nonconvex; in the output-feedback setting, it may even be disconnected. The objective is also nonsmooth because of the worst-case maximization inherent in the $\mathcal{H}_\infty$ norm. 
These features make convergence analysis of local policy search algorithms challenging, and existing works only establish at most asymptotic convergence results \cite{apkarian2006nonsmooth,burke2006hifoo,apkarian2009proximity,burke2005robust} or probabilistic convergence guarantees \cite{guo2022global,guo2023complexity,wang2026zeroth}.
To the best of our knowledge, the deterministic and non-asymptotic convergence rate with feasibility guarantees in $\mathcal{H}_\infty$ policy optimization remains~open. 

In this paper, we develop a new first-order policy optimization algorithm for solving the discrete-time static output-feedback $\mathcal{H}_\infty$ control problem. Our algorithm builds on the class of proximal bundle methods~(PBMs) \cite{lemarechal1981bundle,kiwiel2000efficiency}. By exploiting the structural properties of the problem, we establish the first deterministic, non-asymptotic convergence-rate guarantee for $\mathcal{H}_\infty$ policy optimization. 
We first conduct a detailed structural analysis of the $\mathcal{H}_\infty$ cost 
using complementary frequency-domain and linear matrix inequality (LMI) representations. This analysis establishes the weak convexity of the cost and reveals its hidden convexity.
Building on these properties, we develop a PBM as an implementable approximation of the proximal point method (PPM). While the PPM requires solving an exact proximal subproblem at each iteration and is therefore generally impractical, the PBM replaces this computation with a tractable inner loop while retaining the essential descent property.
Specifically, the inner loop locally convexifies the  $\mathcal{H}_\infty$ cost using its weak convexity and solves a proximal subproblem constructed from a piecewise-affine lower approximation.

More specifically, our main contributions of this work are twofold.

\begin{enumerate}
\item \textbf{Structural properties.} We reveal new weak and hidden convexity properties in $\mathcal{H}_\infty$ policy optimization. 
First, we show that the discrete-time static output-feedback $\mathcal{H}_\infty$ cost is weakly convex on every convex subset of a sublevel set (\cref{theorem:local_wc}). Unlike the local weak convexity guaranteed by the known lower-$C^2$ property \cite{noll2013bundle}, our weak-convexity result is uniform over the prescribed sublevel set, which allows for principled algorithm design and analysis.
{Second, for the state-feedback setting, we establish a weak Polyak--\L{}ojasiewicz (PL) inequality (\cref{theorem:weak-PL}) of the form 
$\mu\bigl(J(K)-J^\star\bigr)
\leq
\operatorname{dist}\bigl(0,\partial J(K)\bigr)$ 
for any $K$ on a sublevel set,
where $\mu>0$ is a uniform constant.} 
This inequality directly ensures the global optimality of every stationary point, recovering \cite[Theorem 1]{guo2022global} as a corollary. Our proof uses a partial minimization for a convex lifting based on the non-strict bounded real lemma \cite{boyd1994linear,rantzer1996kalman}, which extends convex-lifting arguments for differentiable costs \cite{umenberger2022globally,watanabe2026gradient} to the nonsmooth setting. 
Finally, we also provide a Fr\'{e}chet subdifferential formula for the $\mathcal{H}_\infty$ cost (\cref{proposition:subgradient}). 
By exploiting the lower-$C^2$ property, our derivation is simpler than the weak topology argument in \cite[Appendix~B.2]{zheng2023benign} and allows for infinitely many active frequencies, which are not covered by \cite[Section~III]{apkarian2006nonsmooth}.

\item \textbf{Algorithm and convergence guarantees.} We develop a feasibility-preserving PBM with a deterministic, non-asymptotic convergence rate. 
The proposed method extends the PBM in \cite{liang2023proximal,liao2025proximal} from globally weakly convex problems to the nonconvex constrained $\mathcal{H}_\infty$ optimization. 
Our~\cref{theorem:main-result} shows that the total number of iterations to find an $(\eta,\epsilon)$-stationary point (see \Cref{def:inexact-stationary}) is at most 
 $   \mathcal{O}\left(
    \max\left\{
  {\eta^{-4}}, 
    {\epsilon^{-2}} 
    \right\}
    \right). $ 
Setting $\eta=\Theta(\epsilon)$ yields a complexity bound of $\mathcal{O}(\epsilon^{-4})$, matching the state-of-the-art result for globally weakly convex optimization \cite{davis2019stochastic,liang2023proximal,liao2024error,kong2024cost,li2026optimal}. 
Moreover, every point generated in both the inner and outer loops of the PBM remains feasible (\cref{lemma:feasibility_nullstep}). Outer-loop feasibility follows from a quantified descent property inherited from the PPM, and inner-loop feasibility is ensured by a suitable choice of algorithmic parameters, analogous to step-size control in gradient-based methods for the linear-quadratic regulator (LQR) \cite{fazel2018global,hu2023toward}. 
Finally, the PBM can be implemented efficiently using only function value and subgradient oracles.
Each iteration of the inner loop
admits an analytic update under a suitable construction of a lower approximation for the $\mathcal{H}_\infty$~cost.
\end{enumerate}

Some preliminary results appeared in our conference version \cite{watanabe2025policy}, which studies a standard subgradient method under the assumption that all generated iterates remain feasible.
In this work, we develop a new PBM that guarantees the feasibility of every iterate by design. We also provide complete proofs that were omitted in \cite{watanabe2025policy} and present more extensive numerical experiments. 

\subsection{Related works}

\noindent {\bf Robust control.} 
The $\mathcal{H}_\infty$ control problem is one of the most fundamental robust control problems.
It is known that an $\mathcal{H}_\infty$ {suboptimal} unstructured controller can be obtained by solving algebraic Riccati equations \cite{doyle1988state,zhou1996robust}. An $\mathcal{H}_\infty$ optimal controller can be approximately obtained by bisection or LMI-based convex optimization \cite{gahinet1994linear,scherer1997multiobjective,masubuchi1998lmi}. Once structural constraints are imposed, however, the general problem is known to be NP-hard \cite{blondel1997np}. Frequency-domain methods (such as Youla~parameterization \cite{youla1976modern}, system level synthesis \cite{wang2019system}, and input-output parameterization \cite{furieri2019input}) can provide convex approximations for structured control problems. However, these formulations are generally infinite-dimensional, and it is non-trivial to develop effective finite-dimensional computations \cite{zheng2022system,zheng2020equivalence}.

An alternative class of methods is to optimize the controller parameters directly using nonsmooth nonconvex optimization \cite{apkarian2006nonsmooth,burke2005robust,burke2006hifoo,saeki2006fixed,apkarian2009proximity}.
This approach has proved effective in practice and~underlies the solver package \texttt{HIFOO} \cite{burke2006hifoo} and MATLAB functions (\texttt{hinfstruct} and \texttt{systune}) \cite{apkarian2006nonsmooth,apkarian2009proximity}, with successful applications such as a space probe by ESA \cite{apkarian2017h}. However, its non-asymptotic convergence theory is still incomplete and lacks deterministic convergence rate guarantees. Classical methods \cite{apkarian2006nonsmooth,burke2006hifoo,saeki2006fixed,apkarian2009proximity} mainly provide asymptotic guarantees, whereas recent non-asymptotic convergence results \cite{guo2022global,guo2023complexity,wang2026zeroth} are only probabilistic, as they rely on gradient sampling, randomized smoothing, {zeroth-order techniques, respectively.} Our work continues this line of research and provides the first deterministic, non-asymptotic convergence guarantees for discrete-time $\mathcal{H}_\infty$ policy optimization.

\vspace{3pt}

\noindent \textbf{Policy optimization in control.} 
Direct optimization over feedback gains can be dated back to the 1970s
\cite{levine1970determination}.
In the 2000s, the growing interest in structured robust control and large-scale systems led to
the early policy optimization methods \cite{apkarian2006nonsmooth,burke2006hifoo,apkarian2009proximity} for nonsmooth $\mathcal{H}_\infty$ optimization.
More recently, along with the success of reinforcement learning,
\cite{fazel2018global} established a global analysis of policy-gradient methods for LQR,
showing that direct optimization can enjoy global linear convergence despite the nonconvexity. This motivates many follow-up works for LQR \cite{mohammadi2019global,fatkhullin2021optimizing,bu2019lqr,watanabe2025revisiting}.
Related landscape results have since been developed for Linear Quadratic Gaussian (LQG) control \cite{tang2023analysis}, distributed control \cite{furieri2020learning}, LQ dynamic
games \cite{zhang2019policy}, and mixed $\mathcal{H}_2/\mathcal{H}_\infty$ control \cite{zhang2020policy,pai2026policy}; see recent surveys \cite{hu2023toward,talebi2024policy}.

For nonsmooth $\mathcal{H}_\infty$ optimization, 
in addition to the probabilistic algorithms and guarantees in  \cite{guo2022global,guo2023complexity,wang2026zeroth},
the recent works
\cite{zheng2023benign,zheng2024benign,zheng2025ECL} established benign landscape properties for full-state feedback and dynamic output feedback cases, while no algorithm to seek stationary points~was~proposed.

\vspace{3pt}

\noindent \textbf{Weakly convex optimization and proximal bundle methods.} 
Weak convexity is a classical regularity notion that includes convex, smooth nonconvex functions, and a composition of a convex Lipschitz function with a smooth map \cite{davis2019stochastic}. The lower-$C^2$ property is a type of local weak convexity, and has been studied as a class of subsmooth functions \cite{lewis2007nonsmooth}. This was already exploited in nonsmooth $\mathcal{H}_\infty$ optimization, including the proximity-control method \cite{apkarian2009proximity}, but the resulting guarantees were only asymptotic. The work \cite{davis2019stochastic} established the first non-asymptotic convergence rate of subgradient methods for weakly convex functions. Since then, weak convexity has become an active framework for nonsmooth nonconvex optimization with non-asymptotic convergence guarantees \cite{liang2023proximal,kong2024cost,liao2025proximal,li2026optimal}.

The proximal bundle method (PBM)  was first proposed in \cite{lemarechal1981bundle} for minimizing nonsmooth convex functions. Unlike subgradient methods, which typically require diminishing step sizes to guarantee convergence, the PBM can converge to a global minimizer while using a constant proximal parameter.
Early studies focused primarily on establishing asymptotic convergence \cite{kiwiel1990proximity,kiwiel1995approximations}. The first non-asymptotic result appeared in the early 2000s \cite{kiwiel2000efficiency}, and a comprehensive complexity analysis across different function classes and growth conditions has only emerged recently \cite{diaz2023optimal,liang2023proximal}. 
Although the PBM was originally developed for convex optimization, several works have extended it to certain classes of nonconvex problems \cite{hare2010redistributed,hare2009computing,liao2025proximal,liang2023proximal}. In particular,  \cite{hare2010redistributed,hare2009computing} developed PBMs for lower-$C^2$ optimization, and \cite{liang2023proximal,liao2025proximal} proposed PBMs for unconstrained weakly convex optimization with non-asymptotic convergence guarantees. 

In all of these works, the nonconvexity arises solely from the objective function, whereas the feasible set remains convex. In contrast, the $\mathcal{H}_\infty$ policy optimization problem considered in this paper involves both a nonconvex objective and a nonconvex feasible set induced by the stability requirement. This additional nonconvexity requires a more careful algorithmic design and convergence analysis.

\subsection{Paper organization}
The remainder of the paper is organized as follows. \cref{section:problem-statememt} formulates the discrete-time static output-feedback $\mathcal{H}_\infty$ policy optimization problem and reviews the background from nonsmooth optimization. \cref{section:analysis-1} establishes the basic landscape properties, lower-$C^2$ structure, and uniform weak convexity of the cost. \cref{section:analysis-2} derives the explicit subdifferential formula and studies stationary points, including the weak PL inequality in the full-state feedback setting. \cref{section:algorithm} introduces a proximal bundle method and establishes its deterministic non-asymptotic convergence guarantee. \cref{section:simulations} presents numerical experiments, and \cref{section:conclusion} concludes the paper.

\section{Preliminaries and problem statement}\label{section:problem-statememt}

Here, we first formulate the $\Hinf$ policy optimization problem in \cref{subsection:Hinf-formulation}
and then review several basic concepts in nonsmooth nonconvex optimization in \cref{subsection:prelim-nonsmooth-nonconvex-opt}. 

\subsection{Policy optimization for robust $\Hinf$ control}\label{subsection:Hinf-formulation}

Consider the discrete-time linear time-invariant system\footnote{
We focus on discrete-time systems because they are practically relevant and also offer a simple setting for $\Hinf$ optimization, partly due to the coercivity property; see \cite[Lemma 3.2]{guo2023complexity} and \cite[Example 2.3]{zheng2024benign}. See also \Cref{remark:continuous-time}.
}
\begin{equation}\label{eq:dynamic}
\begin{aligned}
x_{t+1} &= A x_t + B u_t + B_w w_t, \qquad
y_t = C x_t,
\end{aligned}
\end{equation}
where $x_t \in \mathbb{R}^{n_x}$ is the state, $u_t \in \mathbb{R}^{n_u}$ is the control input, $y_t \in \mathbb{R}^{n_y}$ is the measured output, and $w_t \in \mathbb{R}^{n_w}$ is the disturbance. The matrices have the dimensions 
$A \in \mathbb{R}^{n_x \times n_x}$,
$B \in \mathbb{R}^{n_x \times n_u}$,
$B_w \in \mathbb{R}^{n_x \times n_w}$, and
$C \in \mathbb{R}^{n_y \times n_x}$.
Throughout, the initial state is fixed at $x_0 = 0$.

We model the disturbance sequence $\mathbf{w}=\{w_0,w_1,\ldots\}$ as an adversarial input with bounded energy. Let $\ell_2^k$ denote the set of square-summable signals in $\mathbb{R}^k$, namely,
\begin{equation*}
\ell_2^k
=
\left\{
\mathbf{w}=\{w_0,w_1,\ldots\}
\;\middle|\;
\|\mathbf{w}\|_2
:=
\sqrt{\textstyle\sum_{t=0}^\infty w_t^\tr w_t}
<\infty
\right\}.
\end{equation*}
The robust $\mathcal H_\infty$ control problem seeks a control sequence
$\mathbf{u}=\{u_0,u_1,\ldots\}$ that minimizes the quadratic cost
$\sum_{t=0}^\infty (x_t^\tr Q x_t + u_t^\tr R u_t)$ against the bounded worst-case disturbance.  
The disturbance bound can be normalized without loss of generality. We thus aim to  minimize
\begin{equation*}
\sup_{\|\mathbf{w}\|_2 \le 1}
\sum_{t=0}^\infty \left(x_t^\tr Q x_t + u_t^\tr R u_t\right)
\end{equation*}
over feedback control policies that generate $\mathbf{u}=\{u_0,u_1,\ldots\}$.
Throughout the paper, we impose the following assumption.
\begin{assumption} \label{assumption:stablizability}
    The system matrices $(A, B)$ are stabilizable, and
    $B_w$ and $C$ are full row rank.
    The weight matrices $Q, R$ are positive definite.
\end{assumption}

As the policy class,
we restrict our attention to the class of linear static output-feedback policies 
\begin{equation}\label{eq:static-policies}
u_t = K y_t, \qquad t = 0,1,2,\ldots,
\end{equation}
and study the policy optimization problem
\begin{equation}\label{eq:robust-policy-optimization}
\begin{aligned}
\min_{K \in \mathbb{R}^{n_u \times n_y}}
\sup_{\|\mathbf{w}\|_2 \le 1}
\quad &
\sum_{t=0}^\infty \left(x_t^\tr Q x_t + u_t^\tr R u_t\right) \\
\text{subject to}\quad &
\text{\cref{eq:dynamic,eq:static-policies}},\;\; x_0=0.
\end{aligned}
\end{equation}
For a given gain $K \in \mathbb{R}^{n_u \times n_y}$, the closed-loop dynamics become
$x_{t+1} = (A+BKC)x_t + B_w w_t$.
We define the stabilizing set by
\begin{equation}
\mathcal K
:=
\left\{
K \in \mathbb{R}^{n_u \times n_y}
\;\middle|\;
\rho(A+BKC) < 1
\right\},
\end{equation}
where $\rho(\cdot)$ denotes the spectral radius. We make another assumption. 

\begin{assumption}\label{assumption:K_nonempty}
    The set $\mathcal{K}$ is non-empty. 
\end{assumption}

If $K \in \mathcal K$, then the corresponding state trajectory
$\mathbf x = \{x_0,x_1,\ldots\}$ belongs to $\ell_2^{n_x}$ for every disturbance $\mathbf w \in \ell_2^{n_w}$. Define the performance output by
$z_t = \begin{bmatrix}
    Q^{1/2}\\
    R^{1/2}KC
\end{bmatrix} x_t$
and
$\mathbf z = \{z_0,z_1,z_2,\ldots\}$.
Then, we also have 
$\|\mathbf z\|_2^2
=
\sum_{t=0}^\infty \left(x_t^\tr Q x_t + u_t^\tr R u_t\right)
< \infty,
\
\forall\, \mathbf x \in \ell_2^{n_x},\;
\mathbf w \in \ell_2^{n_w}.
$
Thus, for each stabilizing gain $K \in \mathcal K$, the closed-loop system in \cref{eq:dynamic,eq:static-policies} defines a bounded linear map from $\ell_2^{n_w}$ to $\ell_2^{n_x+n_u}$. We denote this operator by $\mathbb T_{zw}(K)$ and define its induced $\ell_2$ norm as
\begin{equation}\label{eq:linear-operator-time-domain}
\|\mathbb T_{zw}(K)\|
:=
\sup_{\|\mathbf w\|_2 \ne 0}
\frac{\|\mathbf z\|_2}{\|\mathbf w\|_2},
\qquad
\forall K \in \mathcal K.
\end{equation}
Problem \cref{eq:robust-policy-optimization} is then equivalent to
$\min_{K \in \mathcal K} \|\mathbb T_{zw}(K)\|^2$.
The operator \cref{eq:linear-operator-time-domain} admits the usual frequency-domain expression. Specifically, let $\mathbf T_{zw}(K,\omega)$ denote the transfer matrix from $\mathbf w$ to $\mathbf z$:
\begin{equation} \label{eq:transfer-function-w-z}
\mathbf T_{zw}(K,\omega)
=
\begin{bmatrix}
Q^{1/2} \\
R^{1/2}KC
\end{bmatrix}
\left(e^{j\omega}I-(A+BKC)\right)^{-1} B_w,
\end{equation}
where $\omega \in [0,2\pi]$ is the frequency variable. Its $\mathcal H_\infty$ norm is defined by
\begin{equation}\label{eq:Hinf-norm}
\|\mathbf T_{zw}(K)\|_{\Hinf}
=
\max_{\omega \in [0,2\pi]}
\sigma_{\max}\!\left(\mathbf T_{zw}(K,\omega)\right),
\qquad
\forall K \in \mathcal K,
\end{equation}
where $\sigma_{\max}(\cdot)$ denotes the largest singular value. By \cite[Theorem 4.4]{zhou1996robust}, the induced $\ell_2 \to \ell_2$ norm and the $\mathcal H_\infty$ norm coincide, i.e., 
$\|\mathbb T_{zw}(K)\|
=
\|\mathbf T_{zw}(K)\|_{\Hinf}$,
for 
any $K \in \mathcal K$.

Accordingly, \cref{eq:robust-policy-optimization} can be viewed as the following static-policy $\mathcal H_\infty$ optimization 
\begin{equation}\label{eq:policy-optimization-main}
J^\star = \min_{K \in \mathcal K} J(K),
\end{equation}
where
$J(K) := \|\mathbf T_{zw}(K)\|_{\Hinf}$.
Note that \cref{eq:robust-policy-optimization} minimizes $J(K)^2$, whereas \cref{eq:policy-optimization-main} minimizes $J(K)$. These formulations differ only by a square of the objective and therefore have the same optimal solutions.

\subsection{Preliminaries for nonsmooth nonconvex optimization}
\label{subsection:prelim-nonsmooth-nonconvex-opt}
Problem \cref{eq:policy-optimization-main} is in general nonsmooth and nonconvex.
We here review several basic concepts for nonsmooth and nonconvex optimization.

Consider a locally Lipschitz continuous function $f:  U \to \RR$, where $U \subseteq \RR^n$ is an open set. The \textit{Fr\'echet subdifferential} of $f$ at $\bar x\in U$, denoted as $\partial f(\bar x)$, is defined as:
\begin{align*}
      \partial f(\bar x) = \left \{ v \in \RR^n \mid \liminf_{y \to \bar x,\, y\neq\bar x  }  \frac{ f(y) - f(\bar x) -  \innerproduct{v}{y-\bar x}}{\|y-\bar x\|} \geq 0 \right\}.
\end{align*}
We call an element of the subdifferential $\partial f(\bar x)$ a \textit{subgradient}. If $f$ is convex on $U $,
the subdifferential $\partial f(\bar x)$ admits a more concrete form \cite[Proposition 8.12]{rockafellar2009variational}
\begin{align*}
     \partial f(\bar x) = \left \{ v \in \RR^n \mid f(y) \geq f(\bar x) + \innerproduct{v}{y- \bar x} , \forall y \in U \right\},
\end{align*}
meaning that the linear function $y \mapsto f(\bar x) + \innerproduct{v}{y-\bar x}$ is a global lower bound for $f$ over $U$. If $f$ is differentiable at $\bar x$, then we have $\partial f(\bar x) = \{ \nabla f(\bar x)\}$ \cite[Exercise 8.8]{rockafellar2009variational}.

We say the function $f$ is \textit{$m$-weakly convex} on a convex set $U$ if $x \mapsto f(x) + \frac{m}{2}\|x\|^2$ is convex on $U$. 
The Fr\'echet subdifferential of $m$-weakly convex functions also admits a concrete form: the subdifferential of an $m$-weakly convex function is characterized by the \textit{quadratic} function $y \mapsto f(\bar x)+ \innerproduct{v}{y-\bar x} - \frac{m}{2}\|y-\bar x\|^2$. 
We summarize this in the following result.

\begin{lemma}[{\cite[Lemma 2.1]{davis2019stochastic}}]
    \label{prop:subgrad-weakly}
    Let $U \subseteq \RR^n$ be a convex open set and $f: U \to \RR$ be an $m$-weakly convex function. Given a point $\bar x\in U$, the Fr\'echet subdifferential $\partial f(\bar x)$ can be computed as 
    \begin{align*}
       \partial f(\bar x) = \left \{ v \in \RR^n \mid f(y) \geq f(\bar x) + \innerproduct{v}{y-\bar x}   - \frac{m}{2}\|y-\bar x\|^2, \forall y\in 
       U
       \right\}.
    \end{align*}
\end{lemma}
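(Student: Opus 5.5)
We reduce to the convex case by passing to the convexified function $g(x) := f(x) + \frac{m}{2}\|x\|^2$, which is convex on $U$ by the definition of $m$-weak convexity. The proof then proceeds in three steps.

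\textbf{Step 1: relate the two subdifferentials.} Since the quadratic $q(x)=\frac{m}{2}\|x\|^2$ is continuously differentiable with $\nabla q(\bar x)=m\bar x$, the Fr\'echet subdifferential obeys the exact sum rule
\begin{equation*}
\partial g(\bar x) = \partial f(\bar x) + m\bar x .
\end{equation*}
This follows directly from the definition. Write $q(y)-q(\bar x)-\innerproduct{m\bar x}{y-\bar x} = \frac{m}{2}\|y-\bar x\|^2 = o(\|y-\bar x\|)$. Adding this term to the difference quotient defining $\partial f(\bar x)$ does not change the $\liminf$. Hence $v\in\partial f(\bar x)$ if and only if $v+m\bar x\in\partial g(\bar x)$.

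\textbf{Step 2: use the convex characterization.} Because $g$ is convex on the open convex set $U$, the characterization quoted before the lemma applies (\cite[Proposition 8.12]{rockafellar2009variational}). It gives $w\in\partial g(\bar x)$ if and only if $g(y)\ge g(\bar x)+\innerproduct{w}{y-\bar x}$ for all $y\in U$. To be careful about this step, here is why it holds. If the affine inequality holds, the $\liminf$ condition is immediate. Conversely, suppose the $\liminf$ condition holds. Convexity along the segment from $\bar x$ to $y$ shows that $t\mapsto (g(\bar x+t(y-\bar x))-g(\bar x))/t$ is nondecreasing for $t \in (0,1]$. The $\liminf$ condition makes its limit as $t\downarrow0$ at least $\innerproduct{w}{y-\bar x}$, so the value at $t=1$ is also at least this.

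\textbf{Step 3: algebra.} Substitute $w=v+m\bar x$ and expand $g$. We use the identity
\begin{equation*}
\tfrac{m}{2}\|y\|^2-\tfrac{m}{2}\|\bar x\|^2-\innerproduct{m\bar x}{y-\bar x}=\tfrac{m}{2}\|y-\bar x\|^2 .
\end{equation*}
With it, the inequality $g(y)\ge g(\bar x)+\innerproduct{v+m\bar x}{y-\bar x}$ becomes exactly
\begin{equation*}
f(y)\ge f(\bar x)+\innerproduct{v}{y-\bar x}-\tfrac{m}{2}\|y-\bar x\|^2
\end{equation*}
for all $y\in U$. Chaining the equivalences from Steps 1 and 2 gives the claimed set identity.

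The main point requiring care is Step 2. Passing from the local $\liminf$ definition to a global inequality relies on convexity of $g$ on all of $U$, which holds because $U$ is convex. Everything else is routine.
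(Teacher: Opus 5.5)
Your proof is correct and takes the standard route. The paper gives no proof of its own and cites \cite[Lemma 2.1]{davis2019stochastic}, whose argument is exactly this reduction to the convex function $f+\frac{m}{2}\|\cdot\|^2$, combined with the exact sum rule for a smooth perturbation and the convex characterization \cite[Proposition 8.12]{rockafellar2009variational} quoted just before the lemma. You also verify that convex characterization directly, using monotonicity of difference quotients along the segment from $\bar x$ to $y$, which stays in $U$ because $U$ is convex; this correctly handles the restriction to the open convex domain $U$ rather than all of $\RR^n$.
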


Due to the nonconvexity,  
the goal 
is often to find a stationary point $x$, i.e., $0 \in \partial f(x)$. However, it is generally impossible to find an exact stationary point in finite time.
Accordingly,
we define $\epsilon$-inexact subdifferential for weakly convex functions as follows.
\begin{definition}
    \label{def:inexact-subdiff}
    Let $f: U \to \RR$ be an $m$-weakly convex function, where $U \subseteq \RR^n$ is a convex open set. Given a point $\bar x\in U$ and a constant $\epsilon \geq 0$, the $\epsilon$-inexact subdifferential of $f$ at $\bar x$, denoted as $\partial_{\epsilon}f(\bar x)$, is defined as
    \begin{align*}
       \partial_{\epsilon} f(\bar x) = \left \{ v \in \RR^n \mid f(y) \geq f(\bar x) + \innerproduct{v}{y-\bar x}   - \frac{m}{2}\|y-\bar x\|^2 - \epsilon, \forall y\in U \right\}.
    \end{align*}    
\end{definition}
Clearly, when $\epsilon = 0$, the subdifferential $\partial_{\epsilon} f(\bar x)$ recovers the true subdifferential $\partial f(\bar x)$. This definition of $\partial_{\epsilon} f(\bar x)$ is similar to the $\epsilon$-subdifferential for a convex function \cite[Definition 1.1.1]{hiriart2013convex}.  
All the definitions above assume the function has a convex domain $U$.  However, in policy optimization for robust control, the domain of the objective function  (i.e., the set $\mathcal{K}$) is naturally nonconvex.
We here introduce a new definition for the $\epsilon$-inexact subdifferential. 
Given an open set $V \subseteq \RR^n$ and a point $\bar x$, we say that the open ball $U \subseteq V$ with its center at $\bar x$ is \textit{the largest open ball} on $V$ if it includes any open ball $W\subseteq V$ with its center at $\bar x$. 

\begin{definition}
    \label{def:inexact-subdiff-redefined}
    Let $V \subseteq \RR^n$ be an open set and $f:V \to \RR$  be $m$-weakly convex on \textit{any} convex subset of $V$. Given a point $\bar x \in V$ and $\epsilon \geq 0$, we define the $\epsilon$-inexact subdifferential of $f$ at $\bar x$ as 
    \begin{align*}
        \partial_{\epsilon} f(\bar x) = \left \{ v \in \RR^n \mid f(y) \geq f(\bar x) + \innerproduct{v}{y-\bar x}   - \frac{m}{2}\|y-\bar x\|^2 - \epsilon, \forall y\in U_{\bar x} \right\},
    \end{align*}
    where $U_{\bar x}$ is the largest open ball around $\bar x$ on $V$. 
\end{definition}
If the domain $V$ is $\mathbb{R}^n$, \Cref{def:inexact-subdiff-redefined} reduces to \Cref{def:inexact-subdiff}. 
We then consider the following $(\eta,\epsilon)$-inexact stationary measure. For $\epsilon = 0$ and $\eta = 0$, the stationary definition below recovers the exact stationary, i.e., $ 0 \in \partial f(\bar x)$.
\begin{definition}[$(\eta,\epsilon)$-stationarity]
    \label{def:inexact-stationary}
    Let $V \subseteq \RR^n$ be an open set and $f:V \to \RR$ be $m$-weakly convex on any convex subset of $V$, and $\eta , \epsilon \geq 0$. A point $\bar x \in V$ is called an $(\eta,\epsilon)$-stationary point if there exists $g \in \partial _{\epsilon} f(\bar x)$ such that $\|g\|\leq \eta$.  
\end{definition}

\section{Basic landscape properties and weak convexity}\label{section:analysis-1}

In this section, we begin by reviewing the basic properties with illustrative examples, and then establish weak convexity of the $\Hinf$ cost. 

\subsection{Basic landscape properties} \label{subsection:basic-properties}

We summarize several basic properties of the $\mathcal{H}_\infty$ optimization problem  \cref{eq:policy-optimization-main} below.

\begin{fact}
\label{lemma:basic-properties}
    Suppose \cref{assumption:stablizability,assumption:K_nonempty} hold. 
    Consider the $\mathcal{H}_\infty$ optimization problem with static linear policies in \cref{eq:policy-optimization-main}. 
    Then the following statements hold. 
    \begin{enumerate}
        \item The stabilizing set $\mathcal{K}$ is open and generally nonconvex. 
        With full-state measurement (i.e., $C=I_{n_x}$), $\mathcal{K}$ is always path-connected, whereas in the general output-feedback case it can be disconnected. 
        \item The $\mathcal{H}_\infty$ cost function $J:\mathcal{K}\to\mathbb{R}$ is, in general, nonsmooth and nonconvex. Furthermore, it is coercive, i.e., 
        $
        J(K)\to\infty \, \text{as } K \to \partial\mathcal{K} \text{ or } \|K\|_F \to \infty.
        $ 
        \item For any sublevel set $\mathcal{K}_\nu := \{K \in \mathcal{K} \mid J(K) \leq \nu\}$, the function $J$ is $\ell_\nu$-Lipschitz continuous for some constant $\ell_\nu > 0$. 
    \end{enumerate}
\end{fact}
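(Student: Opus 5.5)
The proof follows the three items in order, using standard facts from linear systems theory and from the $\mathcal{H}_\infty$ policy-optimization literature.

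\textbf{Item 1.} Openness of $\mathcal K$ holds because $K\mapsto \rho(A+BKC)$ is continuous, since eigenvalues depend continuously on matrix entries; $\mathcal K$ is the preimage of the open set $[0,1)$. Nonconvexity will be shown by a small explicit example, for instance $n_x=2$ with two stabilizing gains whose midpoint yields a closed-loop matrix with an eigenvalue outside the unit disk. For $C=I$, path-connectedness uses the discrete-time Lyapunov characterization. $K\in\mathcal K$ if and only if there exists $X\succ0$ with $(A+BK)X(A+BK)^\tr - X\prec 0$. Substituting $L=KX$ and taking a Schur complement gives an LMI in $(X,L)$. Its feasible set is convex, hence path-connected, and $\mathcal K$ is its image under the continuous map $(X,L)\mapsto LX^{-1}$. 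For the output-feedback case I would cite a known example with a disconnected stabilizing set, e.g. a scalar-gain system whose root locus leaves and re-enters the unit disk.

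\textbf{Item 2.} Nonsmoothness and nonconvexity will be illustrated by simple examples, such as a scalar system where $J$ has a kink caused by a switch of the maximizing frequency. For coercivity, I will take the cited results \cite[Lemma 3.2]{guo2023complexity} and \cite[Example 2.3]{zheng2024benign} as given, and sketch the argument. As $K\to\partial\mathcal K$, some eigenvalue $\lambda$ of $A+BKC$ approaches the unit circle at some $e^{j\omega_0}$. Full row rank of $B_w$ and $Q\succ0$ make $Q^{1/2}(e^{j\omega_0}I-A_K)^{-1}B_w$ blow up. As $\|K\|_F\to\infty$, I will use the Lyapunov/Gramian lower bound. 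The relation $J(K)^2\ge \|\mathbf T_{zw}\|_{\mathcal H_2}^2/(2\pi\text{-normalization})$ bounds $J$ below by a multiple of $\operatorname{tr}(C^\tr K^\tr RKC\,\Sigma)$, where $\Sigma\succeq B_wB_w^\tr\succ0$. Since $C$ has full row rank, this is at least $c\|K\|_F^2$.

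\textbf{Item 3.} By coercivity and continuity, $\mathcal K_\nu$ is compact and contained in the open set $\mathcal K$. On $\mathcal K$, $J$ is locally Lipschitz: it is a maximum over a compact frequency interval of $\sigma_{\max}$ of matrices depending smoothly on $(K,\omega)$, with derivatives uniformly bounded near any stabilizing $K$. A finite subcover of $\mathcal K_\nu$ by balls on which $J$ is Lipschitz then yields a local constant. The main obstacle is passing to a global constant on the possibly nonconvex $\mathcal K_\nu$. My plan is a Lebesgue-number argument. Pick $\delta>0$ so that the $\delta$-neighborhood of $\mathcal K_\nu$ is compactly contained in $\mathcal K$ with Lipschitz constant $L$ there. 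Pairs with $\|K_1-K_2\|<\delta$ then satisfy $|J(K_1)-J(K_2)|\le L\|K_1-K_2\|$ along the segment. Pairs at distance at least $\delta$ satisfy $|J(K_1)-J(K_2)|\le \nu\le (\nu/\delta)\|K_1-K_2\|$. Thus $\ell_\nu=\max\{L,\nu/\delta\}$ works.
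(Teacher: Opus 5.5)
Your proposal is correct and follows essentially the route the paper sketches: examples for nonconvexity, disconnectedness and nonsmoothness; the convex LMI reformulation (via $L=KX$) for path-connectedness when $C=I_{n_x}$; a quadratic lower bound for coercivity; and local Lipschitz continuity plus compactness of $\mathcal K_\nu$ for a uniform constant, where your two-regime argument ($\|K_1-K_2\|<\delta$ versus $\ge\delta$) just spells out the standard fact the paper cites from \cite[Exercise~2.14]{clarke1998nonsmooth}. Two minor fixes: the $\mathcal H_2$ comparison needs a dimension factor, namely $\|\mathbf T_{zw}(K)\|_{\mathcal H_2}^2\le n_w\, J(K)^2$ with the normalized $\mathcal H_2$ norm (so it bounds $J^2$, not $J$), and \cite[Example~2.3]{zheng2024benign} is the continuous-time counterexample to coercivity rather than support for it, though neither point affects your argument.
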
 

These properties are well known in the literature
\cite{apkarian2006nonsmooth,guo2022global,guo2023complexity,tang2023global,zheng2023benign,apkarian2009proximity}. 
Instead of reproducing the proofs, we briefly discuss the main ideas and provide illustrative examples.
First, it is easy to construct instances in which $\mathcal{K}$ is nonconvex; see, for example, \cref{example:hinf_nonconvexity,example:saddle_local_min} below.
When $C=I_{n_x}$, the path-connectivity of $\mathcal{K}$ follows from an equivalent convex reformulation, since every convex set is path-connected.
Next, $J$ is naturally nonconvex because its domain $\mathcal{K}$ is itself nonconvex. 
Its nonsmoothness comes from two sources in \cref{eq:Hinf-norm}: 
(i) the maximum singular value of a complex matrix and 
(ii) the maximization over the frequency variable.
The coercivity of $J$ can be established by deriving a quadratic lower bound; see \cite[Lemma 3.2]{guo2023complexity}. 
This implies that each sublevel set
\[
\mathcal{K}_\nu
= \{K \in \mathcal{K} \mid J(K) \leq \nu\}
\]
is compact. 
Finally, recall that $J$ is locally Lipschitz continuous on the open set
$\mathcal K$. Since the sublevel set $\mathcal K_\nu \subseteq \mathcal{K}$ is compact, this local Lipschitz property becomes uniform on
$\mathcal K_\nu$ and thus $J$ is Lipschitz continuous on $\mathcal K_\nu$ \cite[Exercise 2.14]{clarke1998nonsmooth}.
As we show later, we can further derive a more explicit Lipschitz constant estimation for any convex subset of $\mathcal{K}_\nu$ using the subdifferential~of~$J$.

We now present two examples illustrating \cref{lemma:basic-properties}.

\begin{figure*}
\begin{subfigure}{.24\textwidth}
  \centering
\includegraphics[width=0.8\columnwidth]{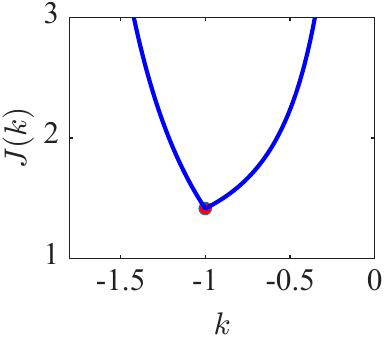}
\caption{$J(k)$ for system \Cref{eq:example-1}}
    \label{fig:1d}
\end{subfigure}%
\begin{subfigure}
{.24\textwidth}
    \centering
\includegraphics[width=0.95\linewidth]{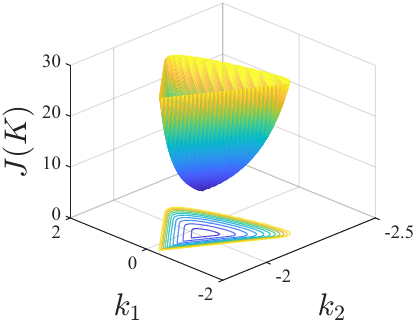}
    \caption{
$J(K)$ for system \Cref{eq:example-2}}
    \label{fig:desouza_xie}
\end{subfigure}
\begin{subfigure}{.24\textwidth}
  \centering
  \includegraphics[width=0.8\columnwidth]{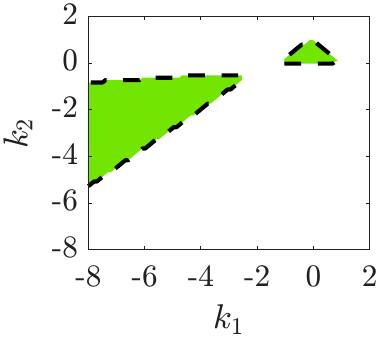}
\caption{
$\mathcal{K}$ in \cref{example:hinf_nonconvexity}}
    \label{fig:outputFB_K}
\end{subfigure}
\begin{subfigure}{.24\textwidth}
  \centering
\includegraphics[width=0.95\columnwidth]{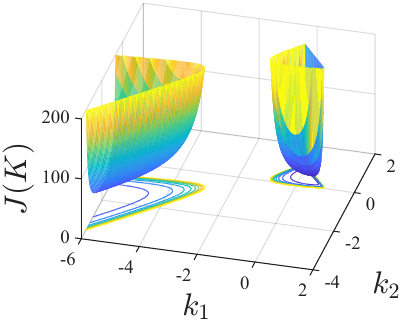}
\caption{
$J(K)$ in \cref{example:hinf_nonconvexity}}
    \label{fig:outputFB_J}
\end{subfigure}%
\caption{
Nonconvex and nonsmooth landscape in $\Hinf$ optimization.
(a)--(b) Nonsmoothness
of the cost functions $J$ in \cref{example:nonsmoothness};
(c)
Disconnectedness of 
$\mathcal{K}$
in \cref{example:hinf_nonconvexity}
with $\alpha=0.13$;
(d)
Nonconvexity of $J$
in \cref{example:hinf_nonconvexity}
with $\alpha=0.13$.
\vspace{-1mm}
}
\label{fig:hinf_plot}
\end{figure*}

\begin{example}[Nonsmoothness of function $J$]\label{example:nonsmoothness}
Consider 
\begin{equation} \label{eq:example-1}
 x_{k+1}  = x_k + u_k +w_k,\ y_k = x_k
 \end{equation}
with $u_k = kx_k$ and $Q=R=1$.
It is clear that $\mathcal{K} = \{k\mid |1+k|<1 \}
=(-2,0).$
A direct computation of its $\mathcal{H}_\infty$ norm gives
\begin{equation} \label{eq:Hinf-example}
    J(k) = 
    \begin{cases}
    {\sqrt{1+k^2}}/{|k|},&k\in[-1,0)\\
    {\sqrt{1+k^2}}/{(k+2)},&k\in(-2,-1). 
    \end{cases}    
\end{equation}
Therefore, $J$ is nonsmooth at $k=-1$, as shown in \cref{fig:1d}.  
In addition, $J(k)\to\infty$ as $k \to 0$ or $k \to -2$, illustrating coercivity.

Next, consider the state-feedback system
\begin{equation} \label{eq:example-2}
    A=\begin{bmatrix}
        0 & 2\\
        4 & 0.2
    \end{bmatrix},\quad
    B = \begin{bmatrix}
        1\\
        0
    \end{bmatrix},
    \quad
    B_w=I_2
\end{equation}
with 
$Q = \diag(1,10^{-3})$ and
$R=1$.
From classical $\mathcal{H}_\infty$ theory, the optimal value of $J(K)$ can be computed as $J^*\approx 8.327$.
The landscape of $J$ for $K=[k_1,k_2]$ is plotted in \cref{fig:desouza_xie}.
This function appears nonsmooth from the contour.
\hfill $\square$
\end{example}

\begin{example}[Nonconvexity of $\mathcal{K}$]\label{example:hinf_nonconvexity}
Consider the following problem data:
\begin{align*}
    &A=
    \begin{bmatrix}
       1 -\alpha & -0.1 & -0.1 \\
0.1 & 1& 0 \\
0 & 0.1 & 1
    \end{bmatrix}
,\; B=
\begin{bmatrix}
   0.1 \\
0 \\
0
\end{bmatrix},\;
B_w = I_3,
\; C=
\begin{bmatrix}
    0 & 1 & 1 \\
1 & -1 & 1
\end{bmatrix},\;
Q = 0.01I_3,\; R = 0.01.
\end{align*}
The static output-feedback gain has the form $K = [k_1,k_2]$.
We can verify numerically that if $0.05 < \alpha \leq 0.13$,
the feasible region $\mathcal{K}$ is disconnected,
which is naturally nonconvex.
For $\alpha=0.13$, the set $\mathcal{K}$ and the corresponding cost $J$ are plotted in \cref{fig:outputFB_K} and \cref{fig:outputFB_J}, respectively. \hfill $\square$
\end{example}

\begin{remark}[$\mathcal{H}_\infty$ control in continuous time]\label{remark:continuous-time}
The continuous-time counterpart of \cref{eq:policy-optimization-main} is also nonsmooth and nonconvex. 
The stabilizing set of gains remains nonconvex; it is path-connected when $C=I_{n_x}$ and may be path-disconnected in general. 
A key difference from the discrete-time setting is that coercivity of $J$ need not hold in continuous time; see \cite[Example~2.3]{zheng2024benign}. 
Consequently, sublevel sets may fail to be compact, and it is not clear whether the continuous-time $\mathcal{H}_\infty$ cost $J$ admits a uniform $\ell_\nu$-Lipschitz constant on such sets. \hfill $\square$
\end{remark}

\subsection{Lower-$C^2$ and weak convexity} \label{subsection:weak-cvx}

We here establish another key technical property: although the $\mathcal{H}_\infty$ cost $J$ is nonconvex, it nevertheless exhibits weak convexity. We will give a precise statement in \Cref{theorem:local_wc} later. 

Recall that a function $f:\mathcal{D}\to\mathbb{R}$ is of class $C^2$ if it is twice differentiable on its open domain $\mathcal{D}$ and both $\nabla f$ and $\nabla^2 f$ are continuous.
Informally, a lower-$C^2$ function may be nonsmooth, but its nonsmoothness arises only through a local maximum of $C^2$ functions, which makes it far more structured than a generic nonsmooth function. 
The formal definition is as follows.

\begin{definition}[Lower-$C^2$ \cite{rockafellar2009variational}] \label{definition:lower-c2}
    We say a function $f:\mathcal{D}\to\RR$, defined on an open domain  $\mathcal{D} \subset \RR^n$, 
is lower-$C^2$, if 
for each $\bar{x}\in\mathcal{D}$, there exist an open neighborhood $V \subset \mathcal{D}$ of $\bar{x}$, a compact set $\mathcal{T}$, and a mapping 
$\varphi:\mathcal{T}\times V \to \mathbb{R}$ such that  
\begin{equation} \label{eq:lower-c2-definition}
    f (x) = \max_{t\in \mathcal{T}} \varphi(t,x),\quad
    \forall x\in V
\end{equation}
where, for each $t\in \mathcal{T}$, the function $\varphi(t,\cdot)$ is $C^2$ in $x$, 
and the mappings $\varphi(\cdot,\cdot)$, $\nabla_x \varphi(\cdot,\cdot)$, 
and $\nabla_x^2 \varphi(\cdot,\cdot)$ 
are jointly continuous on $(t,x)\in \mathcal{T}\times V$. 
\end{definition}

Note that the family of smooth functions $\varphi(t,\cdot)$ in \cref{eq:lower-c2-definition} may depend on the reference point $\bar{x}$.
Thus, a lower-$C^2$ function admits only a \emph{local} representation as a maximum of smooth $C^2$ functions.
Let us provide several simple examples. 
First,
any quadratic function is clearly lower-$C^2$, and so is the maximum of finitely many quadratics\footnote{
In this case, we regard $\mathcal{T}$
as a subset of a discrete topological space.}, e.g.,
$ f(x) = \max_{t\in \{1,\ldots,p\}} f_t(x)$.
The second example is
the maximum eigenvalue function of a symmetric matrix:
\begin{equation} \label{eq:example-eigenvalue}
\lambda_{\max}(X) = \max_{u^\tr u = 1} u^\tr X u,
\end{equation}
where each component $\varphi(u,X)=u^\tr X u$ is linear in $X$, and hence $C^2$.
Likewise, the maximum singular value of a complex matrix $Y \in \mathbb{C}^{p \times m}$ is lower-$C^2$, because
\begin{equation} \label{eq:example-signular-value}
\sigma_{\max}(Y) = \max_{\substack{u \in \mathbb{C}^{m},\, v \in \mathbb{C}^p \\ \|u\|=\|v\|=1}} \mathrm{Re}({v^\her}  Y u),
\end{equation}
where $\mathrm{Re}(\cdot)$ denotes the real part and each component
$\varphi(u,v,Y) = \mathrm{Re}({v^\her} Y u)$
is linear, and hence $C^2$, in $Y$.
These examples show that many spectral functions naturally belong to the lower-$C^2$ class, which is precisely the structure we exploit for the $\mathcal{H}_\infty$ cost; see \cref{eq:Hinf-norm}.

We now show that the $\mathcal{H}_\infty$ cost also falls into this class of lower-$C^2$ functions.

\begin{lemma}\label{fact:lower_C2}
Suppose \cref{assumption:stablizability,assumption:K_nonempty} hold. 
Consider the $\mathcal{H}_\infty$ cost function $J:\mathcal{K}\to\mathbb{R}$ defined in \cref{eq:policy-optimization-main}. 
Then the following properties hold:
\begin{enumerate}
    \item $J$ is a lower-$C^2$ function. 
    \item $J$ is \emph{locally weakly convex}: for every $K_0 \in \mathcal{K}$, there exist constants $m>0$ and $r>0$ with $\mathbb{B}_r(K_0):= \{K \mid \|K - K_0\|_F \leq r\} \subset\mathcal K$ such that 
    \begin{equation} \label{eq:local-weak-convexity}
        K \;\mapsto\; J(K) + \tfrac{m}{2}\|K\|_F^2
    \end{equation}
    is convex over 
    \(\mathbb{B}_r(K_0)\).
    \item $J$ is \emph{subdifferentially regular}, i.e., its directional derivative coincides with its Clarke directional derivative at every point. 
\end{enumerate}
\end{lemma}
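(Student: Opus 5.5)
We will obtain all three items from an explicit max-representation of $J$ together with standard facts from variational analysis \cite{rockafellar2009variational}. The core is item~1; items~2 and~3 then follow from known consequences of the lower-$C^2$ property.

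\textbf{Step 1 (lower-$C^2$ representation).} Combining \cref{eq:Hinf-norm} with the variational formula \cref{eq:example-signular-value} for the largest singular value, we will write
\begin{equation*}
J(K)=\max_{(\omega,u,v)\in\mathcal T}\varphi(\omega,u,v,K),\qquad
\varphi(\omega,u,v,K):=\mathrm{Re}\bigl(v^\her \mathbf T_{zw}(K,\omega)u\bigr),
\end{equation*}
where $\mathcal T:=[0,2\pi]\times\{u\in\mathbb C^{n_w}:\|u\|=1\}\times\{v\in\mathbb C^{n_x+n_u}:\|v\|=1\}$. The set $\mathcal T$ is compact, being a product of a closed interval and two spheres; we regard the complex spheres as subsets of real Euclidean space. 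This representation holds globally on $\mathcal K$, so we may take $V=\mathcal K$, or any open neighborhood of a given $\bar K$.

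\textbf{Step 2 (joint continuity of derivatives).} For $K\in\mathcal K$ we have $\rho(A+BKC)<1$, so $e^{j\omega}I-(A+BKC)$ is invertible for every $\omega$. Hence $\mathbf T_{zw}(K,\omega)$ is a rational, and in fact real-analytic, function of the real and imaginary parts of its arguments on $[0,2\pi]\times\mathcal K$. By the resolvent identity, the first and second derivatives of the inverse in $K$ are explicit products of $(e^{j\omega}I-A-BKC)^{-1}$ with $B(\cdot)C$. The extra factor $R^{1/2}KC$ contributes only affine terms. Consequently $\varphi$, $\nabla_K\varphi$ and $\nabla_K^2\varphi$ are continuous jointly in $(\omega,u,v,K)$, which is exactly the requirement of \cref{definition:lower-c2}.

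The main technical point to verify in this step is uniform invertibility. Near $\bar K$ we need $\inf_{\omega}\sigma_{\min}\bigl(e^{j\omega}I-A-BKC\bigr)$ to stay bounded away from zero on a neighborhood of $\bar K$. This holds because the map $(\omega,K)\mapsto\sigma_{\min}(\cdot)$ is continuous and positive on the compact set $[0,2\pi]\times\mathbb B_r(\bar K)$, for $r$ small enough that $\mathbb B_r(\bar K)\subset\mathcal K$, which is possible since $\mathcal K$ is open (\cref{lemma:basic-properties}).

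\textbf{Step 3 (local weak convexity).} Fix $K_0$ and choose $r$ as in Step~2, so that $\mathbb B_r(K_0)\subset\mathcal K$ and the set $\mathcal T\times\mathbb B_r(K_0)$ is compact. Jointly continuous functions are bounded on compact sets, so
\begin{equation*}
m:=\max_{\mathcal T\times\mathbb B_r(K_0)}\|\nabla_K^2\varphi\|_2<\infty.
\end{equation*}
For each fixed $t=(\omega,u,v)$, the function $\varphi(t,\cdot)+\tfrac m2\|\cdot\|_F^2$ therefore has a positive semidefinite Hessian on the convex set $\mathbb B_r(K_0)$, and so it is convex there. A pointwise supremum of convex functions is convex, which gives \cref{eq:local-weak-convexity}.

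\textbf{Step 4 (subdifferential regularity).} A convex function is subdifferentially regular. On $\mathbb B_r(K_0)$ we have $J=g-q$, with $g:=J+\tfrac m2\|\cdot\|_F^2$ convex by Step~3 and $q:=\tfrac m2\|\cdot\|_F^2$ smooth. Both the directional derivative and the Clarke directional derivative of $J$ at any point equal the corresponding quantity for $g$ minus $\langle\nabla q,\cdot\rangle$. Since these two quantities coincide for $g$, they coincide for $J$. Alternatively, we can cite \cite[Theorem 10.31]{rockafellar2009variational}, which states that lower-$C^2$ functions are subdifferentially regular.

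We expect no step to be deep. The only place requiring care is Step~2: establishing joint continuity of the Hessian in $(\omega,K)$ up to the endpoints of the frequency interval, via the uniform invertibility of $e^{j\omega}I-(A+BKC)$ over a compact neighborhood.
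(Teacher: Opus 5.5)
Your proof is correct. Steps~1--2 match the paper's argument for item~1: a global max-representation over a compact index set, with $\varphi$, $\nabla_K\varphi$, $\nabla_K^2\varphi$ jointly continuous.

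You differ on items~2 and~3. The paper simply invokes \cite[Theorems 10.31 and 10.33]{rockafellar2009variational} for general lower-$C^2$ functions. You prove both directly: you bound $\nabla_K^2\varphi$ uniformly over the compact set $\mathcal{T}\times\mathbb{B}_r(K_0)$ and take a supremum of convex functions, then get regularity from the convex-minus-quadratic decomposition $J=g-q$.

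The paper's route is shorter. It would also work if only a local, reference-point-dependent max-representation were available, but it leaves $m$ and $r$ implicit, as the paper itself remarks after the proof. Your route is self-contained and gives an explicit $m$. Because it uses the fact that the representation here is global, it is essentially the argument the paper later uses to prove Theorem~\ref{theorem:local_wc}: replacing $\mathbb{B}_r(K_0)$ by any compact subset of $\mathcal{K}$, such as a sublevel set, yields the uniform weak-convexity constant with no change.

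Two small remarks. First, the uniform-invertibility discussion in Step~2 is not needed. Matrix inversion is continuous on the open set of invertible matrices, so joint continuity on $[0,2\pi]\times\mathcal{K}$ follows immediately once $e^{j\omega}I-(A+BKC)$ is invertible pointwise. Second, the statement requires $m>0$, so take $m$ to be any positive number at least your Hessian bound, in case that maximum is $0$.
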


\begin{proof}
Similar to \cref{eq:example-signular-value}, the $\mathcal{H}_\infty$ cost in \cref{eq:Hinf-norm} can be written as
\begin{equation*}
J(K)=\max _{\omega \in [0,2\pi]} \max_{\substack{u \in \mathbb{C}^{n_w},\, v \in \mathbb{C}^{n_x+n_u} \\ \|u\|=\|v\|=1}} \mathrm{Re}\left[ v^\her \bT_{zw}(K,\omega)u\right].
\end{equation*} 
For notational convenience, let us define
\begin{equation} \label{eq:local-smooth-hinf}
    \varphi(\theta,K) := \mathrm{Re}\!\left[ v^\her \bT_{zw}(K,\omega) u \right],
\end{equation}
where $\theta = (\omega,u,v) \in \Theta$ and
\begin{equation} \label{eq:Theta}
\Theta := [0,2\pi] \times \{u \in \mathbb{C}^{n_w} : \|u\|=1\} \times \{v \in \mathbb{C}^{n_x+n_u} : \|v\|=1\}.
\end{equation}
We can now write 
\begin{equation} \label{eq:hinf-smooth-componenet}
J(K) = \max_{\theta \in \Theta} \varphi(\theta,K), \quad \forall K \in \mathcal{K}.
\end{equation}
From its definition \cref{eq:local-smooth-hinf}, $\varphi(\theta,\cdot)$ is analytic in $K$ over the open set $\mathcal{K}$, and 
$\varphi$, $\nabla_K\varphi$, and $\nabla^2_K \varphi$ are
jointly continuous in $(\theta,K)$ on 
$\Theta \times \mathcal{K}$. 
We also know that $\Theta$ is compact by definition.
Therefore, by \Cref{definition:lower-c2}, $J$ is lower-$C^2$.
Its weak convexity in the second statement and the subdifferential regularity in the third statement  then follow from
\cite[Theorems 10.31 and 10.33]{rockafellar2009variational}.
\end{proof}

The above argument is inspired by the continuous-time analysis in \cite[Lemma 9]{noll2013bundle}.
Observe that local weak convexity \cref{eq:local-weak-convexity} is a generic consequence of the lower-$C^2$ property \cite[Theorem 10.33]{rockafellar2009variational}. 
However, the corresponding weak convexity constant $m$ may depend on both the point and the neighborhood, and is therefore difficult to quantify explicitly.

It is important to note that the smooth components in
\cref{eq:example-eigenvalue,eq:example-signular-value,eq:hinf-smooth-componenet}
are \emph{global}, in the sense that they do not depend on the reference point $K$.
This is substantially stronger than the general lower-$C^2$ representation in \Cref{definition:lower-c2}.
As a consequence, the $\mathcal{H}_\infty$ cost enjoys a stronger property: 
on any convex subset of a sublevel set, we can obtain a uniform weak convexity constant.

\begin{theorem}[Weak convexity]\label{theorem:local_wc}
Suppose \cref{assumption:stablizability,assumption:K_nonempty} hold. Consider the $\mathcal{H}_\infty$ cost function $J:\mathcal{K}\to\mathbb{R}$ defined in \cref{eq:policy-optimization-main}. Let $\nu > 0$ and define a sublevel set $\mathcal{K}_\nu := \{K \in \mathcal{K} \mid J(K) \leq \nu\}\neq \emptyset$. Then, there exists a constant $m_\nu>0$ such that the function 
\[
K \;\mapsto\; J(K) + \tfrac{m_\nu}{2}\|K\|_F^2
\]
is convex over any nonempty convex subset $V \subset \mathcal{K}_\nu$. 
\end{theorem}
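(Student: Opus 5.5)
We use the global max-representation \cref{eq:hinf-smooth-componenet}, $J(K)=\max_{\theta\in\Theta}\varphi(\theta,K)$, together with the elementary fact that a pointwise supremum of convex functions is convex. The idea is to find one constant $m_\nu$ such that, for \emph{every} $\theta\in\Theta$, the map $K\mapsto \varphi(\theta,K)+\tfrac{m_\nu}{2}\|K\|_F^2$ is convex on any convex $V\subset\mathcal K_\nu$. Then $J+\tfrac{m_\nu}{2}\|\cdot\|_F^2=\max_\theta\bigl(\varphi(\theta,\cdot)+\tfrac{m_\nu}{2}\|\cdot\|_F^2\bigr)$ is convex on $V$. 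Since $V$ is convex and contained in the open set $\mathcal K$, and each $\varphi(\theta,\cdot)$ is $C^2$ there, convexity of each component on $V$ follows once
\[
\nabla_K^2\varphi(\theta,K)[\Delta,\Delta]\ \ge\ -m_\nu\|\Delta\|_F^2\qquad\text{for all }\theta\in\Theta,\ K\in\mathcal K_\nu,\ \Delta .
\]
To see this, restrict to a segment $K_t=K_0+t(K_1-K_0)\subset V$ and note that the second derivative in $t$ is nonnegative.

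It therefore suffices to bound the Hessian of $\varphi$ uniformly over $\Theta\times\mathcal K_\nu$. By Fact~\ref{lemma:basic-properties}, coercivity makes $\mathcal K_\nu$ compact. By the proof of Lemma~\ref{fact:lower_C2}, $\nabla_K^2\varphi$ is jointly continuous on $\Theta\times\mathcal K$, with $\Theta$ compact. Hence $\Theta\times\mathcal K_\nu$ is compact, and the continuous function $(\theta,K)\mapsto\|\nabla_K^2\varphi(\theta,K)\|_{\mathrm{op}}$ attains a finite maximum $M$ on it. We then take $m_\nu:=\max\{M,1\}>0$.

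The only delicate point is making sure the Hessian bound is needed only at points of $V$. Convexity of a $C^2$ function along a segment requires the second-derivative bound only along that segment. Every point of the segment lies in $V\subset\mathcal K_\nu$, where the bound holds, so nothing outside the sublevel set (possibly outside $\mathcal K$) is ever used.

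For a more explicit constant, one can differentiate $\mathbf T_{zw}(K,\omega)$ twice using $(e^{j\omega}I-A_K)^{-1}$, with $A_K=A+BKC$. The second directional derivative is a sum of terms involving $R^{1/2}\Delta C$, $B\Delta C$, and at most three resolvent factors. These are bounded via $\sup_{\omega,K\in\mathcal K_\nu}\|(e^{j\omega}I-A_K)^{-1}\|$, which is finite by compactness and continuity, since $\rho(A_K)<1$ on $\mathcal K_\nu$. The compactness argument alone already proves the theorem.
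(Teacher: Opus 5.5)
Your proposal is correct and follows essentially the same argument as the paper: the global max representation $J(K)=\max_{\theta\in\Theta}\varphi(\theta,K)$, a uniform Hessian lower bound from the joint continuity of $\nabla_K^2\varphi$ on the compact set $\Theta\times\mathcal{K}_\nu$, and convexity of a pointwise maximum of convex functions on $V$. Your segment-wise justification of convexity and the choice $m_\nu=\max\{M,1\}>0$ only make explicit details that the paper leaves implicit.
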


\begin{proof}
We use the equivalent representation of the $\mathcal{H}_\infty$ cost in \cref{eq:hinf-smooth-componenet}. 
For each fixed $\theta\in\Theta$, the mapping
$K\mapsto \varphi(\theta,K)$ in \cref{eq:local-smooth-hinf} is twice continuously differentiable on
$\mathcal K$. Furthermore, the Hessian mapping
$
    (\theta,K)\mapsto \nabla_K^2 \varphi(\theta,K)
$ 
is jointly continuous on $\Theta\times \mathcal K_\nu$. Hence, by compactness of $\Theta$ and $K_\nu$ from \cref{lemma:basic-properties}, 
there exists a constant $m_\nu>0$ such that
\[
    \nabla_K^2\varphi(\theta,K) + m_\nu I \succeq 0,
    \qquad
    \forall (\theta,K)\in \Theta\times\mathcal K_\nu.
\]
Therefore, for every $\theta\in\Theta$, the function
$
    K\mapsto \varphi(\theta,K)+\frac{m_\nu}{2}\|K\|_F^2
$ 
is convex on any convex subset $V\subset\mathcal K_\nu$. Consequently,
\[
\begin{aligned}
    J(K)+\frac{m_\nu}{2}\|K\|_F^2
    &=
    \max_{\theta\in\Theta}
    \left\{
        \varphi(\theta,K)+\frac{m_\nu}{2}\|K\|_F^2
    \right\}
\end{aligned}
\]
is the pointwise maximum of convex functions on $V$, and is therefore convex
on $V$.
\end{proof}

Two key ingredients in the proof above are 1) the smooth components $\varphi(\theta,K)$ in $\mathcal{H}_\infty$ cost representation \cref{eq:hinf-smooth-componenet} are defined uniformly for  all $K \in \mathcal{K}$, and 2) the sublevel subset $\mathcal{K}_\nu$ is compact. These two facts allow us to uniformly lower bound the Hessian of $\varphi(\theta,K)$ over the compact set $\Theta\times\mathcal{K}_\nu$. This directly yields a uniform weak convexity parameter $m_\nu$ on
$\mathcal{K}_\nu$.

\begin{figure}
    \centering    \includegraphics[width=0.7\linewidth]{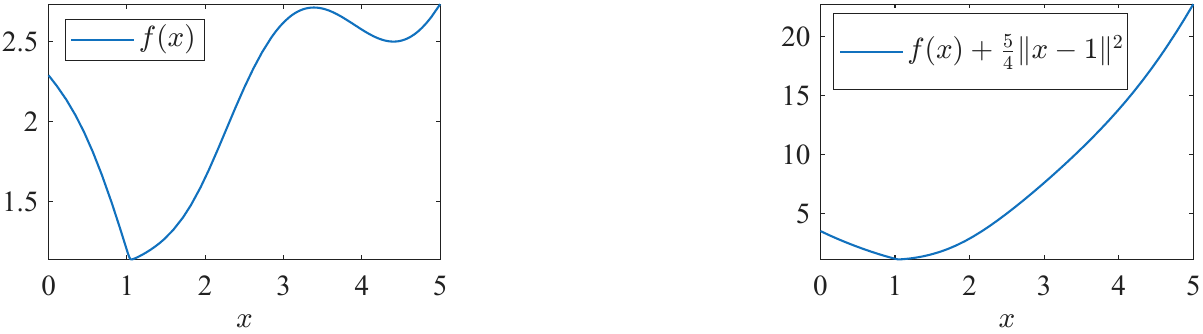}
\caption{
Plots of 
$f$ 
and $f+\frac{5}{4}\|\cdot-1\|^2$ 
on $V=[0,5]$
in
\cref{example:weak_convexity}.}
    \label{fig:wcxv}
\end{figure}

\begin{example}[Weak convexity]\label{example:weak_convexity}
Consider a nonconvex and nonsmooth function 
$f:(-1,\infty)\to\mathbb{R}_+$:
\begin{equation*}
    f(x)
    = \sigma_\mathrm{max}\left(
    c(x)
    \right),\quad
    c(x)=\begin{bmatrix}
        1 & x/2\\
        (1+x)^{-1} & -2\cos
        x
    \end{bmatrix}.
\end{equation*}
This example is not exactly the $\mathcal{H}_\infty$ cost in \cref{eq:Hinf-norm}, since there is no frequency variable, but it captures the essential structure of composing $\sigma_{\max}$ with a smooth mapping.
Indeed, on any compact interval $V\subset(-1,\infty)$, the map $c(\cdot)$ is $L_V$-smooth for some $L_V>0$, and hence $f(\cdot)$ is $m_V$-weakly convex on $V$ for some $m_V>0$.
To illustrate this, we plot $f(x)$ and
$f(x)+\frac{5}{4}\|x-1\|^2$
on $V=[0,5]$ in \cref{fig:wcxv}.
One can clearly see that the added quadratic term convexifies the function over this interval.
\hfill $\square$
\end{example}

\section{Subgradients and stationary points}\label{section:analysis-2}

In this section, we first characterize the Fr\'echet subdifferential of the $\Hinf$ cost. We then discuss its stationary points, and establish a weak PL inequality for the state-feedback case.

\subsection{Subgradient characterization and Lipschitz continuity} \label{subsection:subdifferential}

Recall from \cref{eq:hinf-smooth-componenet} that the $\mathcal{H}_\infty$ cost function admits a
representation as the pointwise maximum of a family of smooth functions.
We can thus use the
standard subdifferential formula for such maximum functions
\cite[Theorem 10.31]{rockafellar2009variational}.

To illustrate the idea, consider the finite pointwise maximum case
$
    f(x)=\max\{f_1(x),\ldots,f_N(x)\},
$ 
where each $f_i$ is continuously differentiable. Let
$
    I(x):=\{i\in\{1,\ldots,N\}\mid f_i(x)=f(x)\}
$ 
denote the active index set at $x$. If there is a unique active function, say
$I(x)=\{i\}$, then $f$ is differentiable at $x$ and
$
    \partial f(x)=\{\nabla f_i(x)\}.
$ 
If multiple functions are active, then the subdifferential of $f$ is described by the convex hull of the gradients of the active functions:
$
    \partial f(x)
    =
    \operatorname{conv}\{\nabla f_i(x)\mid i\in I(x)\}.
$ 
Thus, nonsmoothness arises precisely at points where several smooth components are active.

The same principle applies to the $\mathcal{H}_\infty$ cost. From \cref{eq:hinf-smooth-componenet}, we have 
$
    J(K)=\max_{\theta\in\Theta} \varphi(\theta,K),
$ 
where $\Theta$ is compact and each $\varphi(\theta,\cdot)$ is smooth in $K$. Define the
active index set
\begin{equation} \label{eq:Theta_K-discrete-time}
    \Theta_0(K)
    :=
    \{\theta\in\Theta\mid \varphi(\theta,K)=J(K)\}.
\end{equation}
Then, by \cite[Theorem 10.31]{rockafellar2009variational}, we have 
\begin{equation} \label{eq:J-subdifferential}
    \partial J(K)
    =
    \operatorname{conv}
    \left\{
       \nabla_K \varphi(\theta,K)
        \mid
        \theta\in\Theta_0(K)
    \right\}. 
\end{equation}
In other words, the subdifferential of $J$ is the convex hull of the gradients
of the active smooth components. These active components may correspond to
different maximizing frequencies $\omega$, or to different singular-vector
pairs associated with the same maximizing frequency. 
By explicitly computing the gradient $\nabla_K \varphi(\theta,K)$, we get the following characterization.

\begin{proposition}[Subdifferential characterization]\label{proposition:subgradient}
Suppose \cref{assumption:stablizability,assumption:K_nonempty} hold.
The Fr\'echet subdifferential of $J(\cdot)$ at $K\in\mathcal{K}$ 
is given by \cref{eq:J-subdifferential}, 
where 
\begin{equation} \label{eq:gradient-smooth-component}
\nabla_K \varphi(\theta,K) = 
\mathrm{Re}\left[
C\Gamma_{(K,\omega)} B_wuv^\her
\left(
\begin{bmatrix}
    0\\
    R^{1/2}
\end{bmatrix} 
+
\begin{bmatrix}
Q^{1/2}\\
R^{1/2}KC
\end{bmatrix}
\Gamma_{(K,\omega)} B
\right)\right]^\tr
\end{equation}
and 
$\Gamma_{(K,\omega)} = (e^{j\omega}I -(A+BKC))^{-1}$. 
Here, $\theta=(\omega,u,v)\in\Theta$, with $\Theta$ defined in
\cref{eq:Theta}; the active index set $\Theta_0(K)$ is defined in
\cref{eq:Theta_K-discrete-time}; and the smooth component
$\varphi(\theta,K)$ is defined in \cref{eq:local-smooth-hinf}. 
\end{proposition}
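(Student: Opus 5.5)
The proof has two parts. First we justify the convex-hull formula \cref{eq:J-subdifferential} for the Fr\'echet subdifferential. Then we compute the gradient of each smooth component explicitly.

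For the first part, we rely on \cref{fact:lower_C2}. Around any $K\in\mathcal K$, $J$ is represented as $\max_{\theta\in\Theta}\varphi(\theta,K)$ with $\Theta$ compact. The maps $\varphi$ and $\nabla_K\varphi$ are jointly continuous on $\Theta\times\mathcal K$. We then invoke \cite[Theorem 10.31]{rockafellar2009variational} for such max-functions. It gives that $J$ is subdifferentially regular and that the limiting/Clarke subdifferential equals $\operatorname{conv}\{\nabla_K\varphi(\theta,K)\mid\theta\in\Theta_0(K)\}$. By regularity (item 3 of \cref{fact:lower_C2}), the Fr\'echet and limiting subdifferentials coincide, which yields \cref{eq:J-subdifferential}.

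Compactness of $\Theta_0(K)$ needs a short check. It is a closed subset of the compact set $\Theta$, since $\varphi(\cdot,K)$ is continuous. Together with continuity of $\theta\mapsto\nabla_K\varphi(\theta,K)$, this makes the image compact, so its convex hull is already closed (Carath\'eodory). This point matters because $\Theta_0(K)$ may be infinite, for example with a continuum of active frequencies or singular-vector pairs. No finiteness assumption is needed.

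For the second part, we fix $\theta=(\omega,u,v)$ and differentiate $K\mapsto \mathrm{Re}[v^\her \mathbf T_{zw}(K,\omega)u]$ in a direction $\Delta\in\mathbb R^{n_u\times n_y}$. Using $\Gamma=\Gamma_{(K,\omega)}$, the resolvent identity gives
\[
\mathrm{d}\Gamma[\Delta]=\Gamma B\Delta C\Gamma .
\]
Writing $\mathbf T_{zw}=\begin{bmatrix}Q^{1/2}\\ R^{1/2}KC\end{bmatrix}\Gamma B_w$, the product rule then gives the directional derivative
\[
\mathrm{d}\mathbf T_{zw}[\Delta]=\begin{bmatrix}0\\ R^{1/2}\Delta C\end{bmatrix}\Gamma B_w+\begin{bmatrix}Q^{1/2}\\ R^{1/2}KC\end{bmatrix}\Gamma B\Delta C\Gamma B_w .
\]
Both terms have the form $M_1\Delta M_2$. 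The first has $M_1=\begin{bmatrix}0\\ R^{1/2}\end{bmatrix}$ and $M_2=C\Gamma B_w$. The second has $M_1=\begin{bmatrix}Q^{1/2}\\ R^{1/2}KC\end{bmatrix}\Gamma B$ and $M_2=C\Gamma B_w$. Hence
\[
\mathrm{Re}\,[v^\her \mathrm{d}\mathbf T_{zw}[\Delta]u]=\mathrm{Re}\,\mathrm{tr}\bigl(\Delta\, M_2 u v^\her M_1\bigr),
\]
with $M_1=\begin{bmatrix}0\\ R^{1/2}\end{bmatrix}+\begin{bmatrix}Q^{1/2}\\ R^{1/2}KC\end{bmatrix}\Gamma B$.

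Since $\Delta$ is real, we have $\mathrm{Re}\,\mathrm{tr}(\Delta X)=\langle \Delta,(\mathrm{Re}X)^\tr\rangle_F$. Reading off the Frobenius gradient gives exactly \cref{eq:gradient-smooth-component}.

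The main obstacle is the first part: making sure the cited max-function theorem applies with an infinite, possibly uncountable active set, and that it delivers the Fr\'echet rather than only the Clarke subdifferential. This is handled by the compactness of $\Theta$, the joint continuity of $\nabla_K\varphi$, and subdifferential regularity from \cref{fact:lower_C2}. The gradient calculation is routine once the resolvent derivative is written down.
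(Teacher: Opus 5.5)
Your proposal is correct and follows essentially the same route as the paper. Both invoke \cite[Theorem 10.31]{rockafellar2009variational} for the convex-hull formula and obtain $\nabla_K\varphi(\theta,K)$ by a first-order perturbation of the resolvent: the paper expands $(I-t\Gamma_{(K,\omega)}B\,dK\,C)^{-1}$ as a Neumann series, which is the same identity $\mathrm{d}\Gamma[\Delta]=\Gamma B\Delta C\Gamma$ that you use. Your explicit checks that $\Theta_0(K)$ is compact (so the convex hull is closed) and that regularity makes the Fr\'echet and limiting subdifferentials coincide are left implicit in the paper's proof; the paper addresses the latter only in a later remark.
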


The full proof is given in  \cref{subsection:proof-subgradient}.
To the best of our knowledge, an explicit formula of this form is not readily available for
the discrete-time setting, although continuous-time counterparts have appeared
in \cite{apkarian2006nonsmooth,zheng2023benign}. Importantly, this result also
allows the set of maximizing frequencies $\arg\max_{\omega\in[0,2\pi]}
    \sigma_{\max}\bigl(\bT_{zw}(K,\omega)\bigr)$ to be infinite.  

We use the following example from \cref{eq:example-1} to illustrate \cref{proposition:subgradient}. 

\begin{example}[Subdifferential computation]\label{example:subdifferential}
Consider the scalar system again in \cref{eq:example-1}.
From the closed-form expression \cref{eq:Hinf-example}, it is not difficult to verify that
\begin{equation}\label{eq:subgrad_closed_form}
    \partial J(k) = 
\begin{cases}
\left\{\dfrac{1}{k^2\sqrt{1+k^2}}\right\}, 
& k\in(-1,0), \\
\left[-\dfrac{3}{\sqrt{2}},\dfrac{1}{\sqrt{2}}\right],
& k=-1, \\
\left\{\dfrac{2k-1}{(k+2)^2\sqrt{1+k^2}}\right\},
& k\in(-2,-1).
\end{cases}
\end{equation}
The only nonsmooth point is $k=-1$. We now show that the same expression can
be recovered from \cref{proposition:subgradient}.
For this scalar system, we have
$   \Gamma_{k,\omega}=(e^{j\omega}-1-k)^{-1},
    \, 
    \mathbf{T}_{zw}(k,\omega)=\Gamma_{k,\omega}
    \begin{bmatrix}
        1\\ k
    \end{bmatrix}.
$ 
The maximizing frequencies are $\omega=0$ for $k\in(-1,0)$, 
$\omega=\pi$ for $k\in(-2,-1)$, and every $\omega\in[0,2\pi]$ when
$k=-1$. Note that the maximizing frequencies are infinitely many at $k = -1$.  
A convenient choice of active set representation is
\[
\Theta_0(k)
=
\begin{cases}
\{(0,s,sv_1) \mid s^2=1,\ s\in\mathbb R\},
& k\in(-1,0), \\
\{(\omega,s,sv_2(\omega))\mid s^2=1,\ s\in\mathbb R,\ 
\omega\in[0,2\pi]\},
& k=-1, \\
\{(\pi,s,sv_3)\mid s^2=1,\ s\in\mathbb R\},
& k\in(-2,-1),
\end{cases}
\]
where
$    v_1=\frac{1}{\sqrt{1+k^2}}
    \begin{bmatrix}
        1\\ k
    \end{bmatrix},
    \,
    v_2(\omega)=\frac{e^{-j\omega}}{\sqrt{2}}
    \begin{bmatrix}
        1\\ -1
    \end{bmatrix},
    \, 
    v_3=-v_1.
$ 
Using \cref{proposition:subgradient} 
we obtain
\begin{equation}\label{eq:subgrad_closed_form2}
    \partial J(k) = \operatorname{conv}
    \left\{
        \nabla_k \varphi(\theta,k) 
        \mid
        \theta\in\Theta_0(k)
    \right\}.
\end{equation}
A direct computation gives 
\begin{align*}
    \nabla_k \varphi(\theta,k)
    =
    \mathrm{Re}
    \left[
    \Gamma_{(k,\omega)}
    uv^\her
    \left(
    \begin{bmatrix}
        0\\
        1
    \end{bmatrix}
    + 
    \begin{bmatrix}
        1\\
        k
    \end{bmatrix}\Gamma_{(k,\omega)}
    \right)
    \right]^\tr
    =
    \begin{cases}
        \dfrac{1}{k^2\sqrt{1+k^2}} ,& k\in(-1,0), \\
        \sqrt{2}\cos \omega -\frac{1}{\sqrt{2}},
        & 
        k= -1,\, \omega\in[0,2\pi],\\
        \dfrac{2k-1}{(k+2)^2\sqrt{1+k^2}},
    & k \in (-2,-1).
    \end{cases}
\end{align*}
Since $-\frac{3}{\sqrt{2}}
\leq
\sqrt{2}\cos \omega -\frac{1}{\sqrt{2}}
\leq
\frac{1}{\sqrt{2}}$ for $\omega\in [0,2\pi]$,
the representation \cref{eq:subgrad_closed_form2} coincides with \cref{eq:subgrad_closed_form}.
\hfill $\square$
\end{example}

Note that the subdifferential $\partial J(K)$ is bounded for every $K\in\mathcal K$. Indeed, by \cref{eq:Theta_K-discrete-time}, the active index set $\Theta_0(K)$ is a subset of the compact set $\Theta$, and $\nabla_K\varphi(\theta,K)$ is continuous in $\theta$. Moreover, by the compactness of the sublevel set $\mathcal K_\nu$ and the joint continuity of $\nabla_K\varphi(\theta,K)$ in $(\theta,K)$, these gradients are uniformly bounded over $\Theta\times\mathcal K_\nu$. Consequently, the subdifferential of $J$ is uniformly bounded on $\mathcal K_\nu$. Then, it follows by the standard argument that $J$ is $L_\nu$-Lipschitz continuous on any convex
subset of $\mathcal K_\nu$. The following corollary is immediate. 

\begin{corollary}[Lipschitz continuity]\label{corollary:Lipschitz}
Suppose \cref{assumption:stablizability,assumption:K_nonempty} hold.
 Let
$\mathcal K_\nu:=\{K\in\mathcal K\mid J(K)\le \nu\}\neq\emptyset$. The function $J$ is $\ell_\nu$-Lipschitz over any convex subset of $\mathcal{K}_\nu$, where $\ell_\nu>0$ can be chosen such~that  
\begin{equation*}
    \ell_\nu
    \geq  \max_{K\in\mathcal{K}_\nu,\,\theta \in \Theta}
   \left \| \nabla_K \varphi(\theta,K)
\right 
\|_F, 
\end{equation*}
where $\nabla_K \varphi(\theta,K)$ is given in \cref{eq:gradient-smooth-component}.
\end{corollary}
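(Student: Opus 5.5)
The plan is to combine the uniform gradient bound over the compact set $\Theta\times\mathcal K_\nu$ with the mean value theorem for locally Lipschitz functions along line segments inside a convex subset $V\subset\mathcal K_\nu$.

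\textbf{Step 1: finiteness of the constant.} The gradient $\nabla_K\varphi(\theta,K)$ in \cref{eq:gradient-smooth-component} is jointly continuous on $\Theta\times\mathcal K$. This holds because $\Gamma_{(K,\omega)}$ is continuous wherever $\rho(A+BKC)<1$, since $e^{j\omega}$ then never lies in the spectrum. Moreover, $\Theta$ is compact by \cref{eq:Theta} and $\mathcal K_\nu$ is compact by \cref{lemma:basic-properties}. Hence the maximum on the right-hand side is attained and finite, and any $\ell_\nu$ at least this large is admissible.

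\textbf{Step 2: bound on subgradients.} For every $K\in\mathcal K_\nu$, \cref{proposition:subgradient} gives $\partial J(K)=\operatorname{conv}\{\nabla_K\varphi(\theta,K)\mid\theta\in\Theta_0(K)\}$. Since $\Theta_0(K)\subset\Theta$ and the Frobenius norm is convex, every $g\in\partial J(K)$ satisfies $\|g\|_F\le\ell_\nu$.

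\textbf{Step 3: Lipschitz estimate along segments.} Fix $K_1,K_2\in V$ and set $K(t)=K_1+t(K_2-K_1)$ for $t\in[0,1]$; by convexity of $V$, this segment lies in $V\subset\mathcal K_\nu$. There are two natural routes.
\begin{itemize}
\item \emph{Lebourg's mean value theorem.} It gives $J(K_2)-J(K_1)=\langle g,K_2-K_1\rangle$ for some $g$ in the Clarke subdifferential at some $K(t^*)$. By subdifferential regularity (\cref{fact:lower_C2}), the Clarke and Fr\'echet subdifferentials coincide for this lower-$C^2$ function. Cauchy--Schwarz and Step 2 then yield $|J(K_2)-J(K_1)|\le\ell_\nu\|K_2-K_1\|_F$.
\item \emph{Direct route.} The function $h(t)=J(K(t))$ is locally Lipschitz, hence absolutely continuous and differentiable almost everywhere. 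At points of differentiability, regularity gives $h'(t)=\langle g,K_2-K_1\rangle$ for any $g\in\partial J(K(t))$, and this subdifferential is nonempty. Integrating gives the same bound.
\end{itemize}

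\textbf{Main obstacle.} The delicate step is justifying the mean value or chain rule on the segment. It requires the segment to stay inside the open set $\mathcal K$, where $J$ is locally Lipschitz, and the sublevel bound to hold along it. Both are guaranteed by convexity of $V$ and $V\subset\mathcal K_\nu$.

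There is one further subtlety: near the boundary of $\mathcal K_\nu$, subgradients at points of $V$ must be controlled by the constant defined on $\mathcal K_\nu$ itself, not on a neighbourhood. Step 2 handles this pointwise, since the convex-hull formula for $\partial J(K)$ only uses the gradients $\nabla_K\varphi(\theta,K)$ at the same $K$.
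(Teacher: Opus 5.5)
Your proposal is correct and follows the paper's route. You bound $\|\nabla_K\varphi(\theta,K)\|_F$ uniformly on the compact set $\Theta\times\mathcal K_\nu$, transfer the bound to $\partial J(K)$ through the convex-hull formula of \cref{proposition:subgradient}, and conclude Lipschitz continuity on convex subsets by a mean-value argument along segments. The paper only calls that last step ``the standard argument.'' Your Lebourg argument, which identifies the Clarke and Fr\'echet subdifferentials via the regularity in \cref{fact:lower_C2}, makes it explicit correctly, as does the a.e.\ differentiation of $t\mapsto J(K(t))$.
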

This corollary gives a Lipschitz estimate on any convex subset
of $\mathcal K_\nu$ directly from the gradient bound. We note that $J$ is also Lipschitz continuous on the entire sublevel set $\mathcal K_\nu$, as mentioned in \cref{lemma:basic-properties}. However, the Lipschitz constant on $\mathcal K_\nu$ could be larger than that in \cref{corollary:Lipschitz} since $\mathcal K_\nu$ is compact but in general nonconvex.

\subsection{Non-optimal stationary points and a weak PL condition}\label{subsection:stationary points}

In this subsection, we study the stationary points of the $\Hinf$ cost $J$.
As is common in nonconvex optimization, the general output-feedback $\mathcal{H}_\infty$ problem may possess non-optimal stationary points, including non-global local minima and saddle points.

\begin{fact}
Suppose \cref{assumption:stablizability,assumption:K_nonempty} hold. 
Consider the $\mathcal{H}_\infty$ cost function $J:\mathcal{K}\to\mathbb{R}$ defined in \cref{eq:policy-optimization-main}.  
For the general static output feedback, i.e., when $C \neq I_{n_x}$, the function $J$ may admit non-optimal stationary points, including non-global local minima and saddle points.
\end{fact}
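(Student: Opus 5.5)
This fact is an existence statement ("may admit"), so I will prove it by exhibiting explicit output-feedback instances and verifying stationarity. The natural candidates are \cref{example:hinf_nonconvexity}, with its disconnected $\mathcal{K}$, and a second low-dimensional instance built for the saddle point. Throughout I use the characterization $0\in\partial J(K)$ with $\partial J(K)$ given by \cref{eq:J-subdifferential} and \cref{proposition:subgradient}.

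\emph{Non-global local minimum.} I will use disconnectedness. By \cref{lemma:basic-properties}, $J$ is coercive, so $J(K)\to\infty$ as $K\to\partial\mathcal{K}$ and the sublevel sets are compact.
\begin{enumerate}
\item Take a connected component $\mathcal{C}$ of $\mathcal{K}$ (for instance in \cref{example:hinf_nonconvexity} with $\alpha=0.13$). Intersected with a sublevel set, $\mathcal{C}$ is compact, so $J$ attains a minimum on $\mathcal{C}$.
\item Coercivity puts this minimizer in the interior of $\mathcal{C}$, so it is a local minimum of $J$ on the open set $\mathcal{K}$.
\item Fermat's rule for Fr\'echet subdifferentials then gives $0\in\partial J(K)$, so the point is stationary.
\item If the two components have different minimal values, the minimizer of the worse component is a non-global local minimum. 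I will confirm numerically that the two component minima differ in the example, computing $J$ by frequency gridding as in \cref{fig:outputFB_J}.
\end{enumerate}

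\emph{Saddle point.} I will look for a symmetric instance in which the active frequencies give a subdifferential containing $0$ while $J$ strictly decreases along some direction. A concrete plan is a system with a symmetry $K\mapsto -K$ or a coordinate swap, so that a symmetric point is stationary by symmetry. This works because $J$ is subdifferentially regular (\cref{fact:lower_C2}) and the subdifferential is the convex hull in \cref{eq:J-subdifferential}. I then evaluate $J$ along a curve through that point and show it is a maximum in one direction and a minimum in another. Alternatively, I will take a smooth point ($\Theta_0(K)$ a single frequency with a simple singular value) where $\nabla_K\varphi=0$ and the Hessian of the active component is indefinite. Since $J$ equals the active smooth component near a point with unique active index, $J$ then has a genuine saddle there.

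\emph{Main obstacle.} Exhibiting the saddle rigorously is the hard part. The certificate rests on computations, namely locating maximizing frequencies and checking the Hessian sign, which are most naturally done numerically. I will aim for a scalar-input, two-output example small enough that $J$ along the relevant lines has a closed form like \cref{eq:Hinf-example}, and otherwise cite the known examples in the literature (e.g.\ \cite{zheng2023benign,tang2023global}).
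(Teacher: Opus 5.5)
Your argument for the non-global local minimum is correct, and it takes a different route from the paper. At $\alpha=0.13$ each connected component $\mathcal{C}$ of the open set $\mathcal{K}$ is open and relatively closed in $\mathcal{K}$. So $\mathcal{C}\cap\mathcal{K}_\nu$ is compact and $J$ attains its minimum on $\mathcal{C}$. That minimizer is interior because $\mathcal{C}$ is open; coercivity is needed only for existence. Fermat's rule then makes it stationary, and only the comparison of the two component minima is left to numerics. The paper instead reads the local minimum off the plotted landscape at $\alpha=0.14$ (\cref{example:saddle_local_min}), so your route gives an actual existence argument for this half.

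The gap is the saddle point. The statement asserts one explicitly, and you never produce an instance. Moreover, the certificates you propose would not establish it.

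\textbf{Smooth-point Hessian.} Near a point whose peak is at a single frequency with simple $\sigma_{\max}$, $J$ does \emph{not} coincide with the active component. One only has $J(K')\ge\varphi(\theta_0,K')$ for $\theta_0\in\Theta_0(K)$, with equality at $K'=K$, because the maximizing frequency and singular vectors move with $K'$. Hence $J-\varphi(\theta_0,\cdot)$ is minimized at $K$, and whenever $J$ is $C^2$ there, $\nabla^2 J(K)\succeq\nabla_K^2\varphi(\theta_0,K)$. A positive-curvature direction of the component transfers to $J$, so $K$ is not a local maximum. A negative one does not transfer, so an indefinite $\nabla_K^2\varphi(\theta_0,K)$ is compatible with $K$ being a local minimum of $J$. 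You need second-order information on $J$ itself, with the peak frequency tracked, and that is what the paper evaluates.

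\textbf{Symmetry.} A coordinate swap only makes $\partial J$ at a fixed point invariant under the swap. That yields a subgradient in the fixed subspace, not $0$; you would additionally have to minimize over that subspace. The symmetry $K\mapsto -K$ does give $0\in\partial J(0)$ by convexity of $\partial J(0)$. You would still have to construct such a system with $0\in\mathcal{K}$ and show that $0$ is not a local minimizer.

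The paper handles this half with the same family at $\alpha=0.14$, where the two components merge and a saddle appears between the basins. At $K_{\rm s}\approx[-1.92,-0.26]$ it reports $\|\nabla J(K_{\rm s})\|_F\le 7.11\times 10^{-10}$ and a numerically evaluated Hessian of $J$ with eigenvalues $\approx -174.41$ and $2001.75$.

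Your disconnectedness idea could also be pushed this way. If the two basins survive the merge as distinct strict local minima, a nonsmooth mountain-pass argument on the compact sublevel sets would give a further stationary point at a level strictly above the higher of the two minima, hence non-optimal. This works because Clarke and Fr\'echet subdifferentials coincide by regularity (\cref{fact:lower_C2}). Classifying that point as a saddle would still require a local check like the paper's Hessian computation.
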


We illustrate this through \cref{example:saddle_local_min} below.
This example uses the same dynamical system as in \cref{example:hinf_nonconvexity}, but with a different value of $\alpha$.
It shows that such non-optimal stationary points arise naturally from the nonconvexity of the problem and the possible disconnectedness of $\mathcal{K}$.

\begin{example}[Saddle points]\label{example:saddle_local_min}
Consider the problem data in \cref{example:hinf_nonconvexity}.
When $\alpha=0.13$, the feasible set $\mathcal{K}$ has two disconnected components; see \cref{fig:outputFB_K}. 
When $\alpha=0.14$, these components merge and $\mathcal{K}$ becomes connected, as shown in \cref{fig:saddle_K}. 
Further, 
\cref{fig:saddle_J,fig:saddle_J_zoom} show that
the landscape of $J$ exhibits multiple stationary points, including a saddle point.
At
$
K_{\rm s}=[k_1,k_2]\approx[-1.92,-0.26],
$ 
we obtain $J(K_{\rm s})\approx106.72$ and
$\|\nabla J(K_{\rm s})\|_F\le 7.11\times10^{-10}$.
A numerical evaluation of Hessian $H$ gives
eigenvalues approximately $\lambda_\mathrm{min}(H)\approx-174.41$ and $\lambda_\mathrm{max}(H)\approx 2001.75$, showing that
the $\mathcal{H}_\infty$ cost may have a strict saddle.
\hfill $\square$
\end{example}

\begin{figure}
\begin{subfigure}{0.33\columnwidth}
  \centering
\includegraphics[width=0.73\columnwidth]{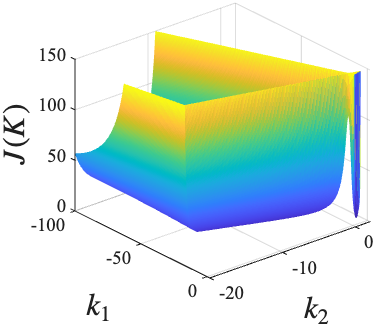}
\caption{
$J(K)$ in \cref{example:saddle_local_min}}
\label{fig:saddle_J}
\end{subfigure}%
\begin{subfigure}{0.33\columnwidth}
  \centering
\includegraphics[width=0.73\columnwidth]{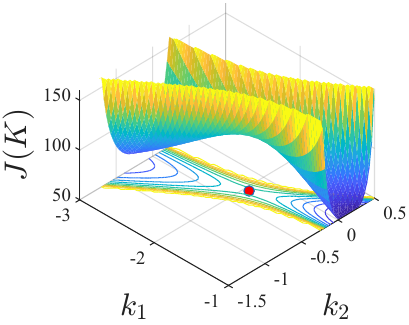}
\caption{
$J(K)$ in \cref{example:saddle_local_min} (zoomed)}
\label{fig:saddle_J_zoom}
\end{subfigure}
\begin{subfigure}{0.33\columnwidth}
  \centering
\includegraphics[width=0.6\columnwidth]{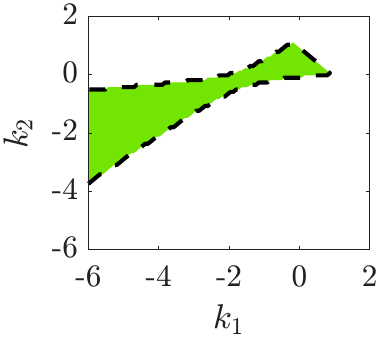}
\caption{$\mathcal{K}$
in \cref{example:saddle_local_min}
}
\label{fig:saddle_K}
\end{subfigure}%
\caption{
Non-optimal stationary points of $J$
with the feasible set $\mathcal{K}$
in the static output-feedback case of \cref{example:saddle_local_min}
(the same system as \cref{example:hinf_nonconvexity} with $\alpha=0.14$).
The red dot indicates a saddle point, and the landscape also contains a local minimum.
}
\label{fig:saddle_point}
\end{figure}

In contrast, under full-state measurement ($C=I_{n_x}$), the $\mathcal{H}_\infty$ optimization problem has a significantly better landscape, with a connected feasible region and a convex-like geometry, as suggested by \cref{fig:1d,fig:desouza_xie}.
Indeed, we can establish a weak PL condition in this case, which implies that any stationary point is globally optimal.

{\begin{theorem}[Weak PL condition]\label{theorem:weak-PL}
Suppose \cref{assumption:stablizability,assumption:K_nonempty} hold.
Assume full-state measurement is available, i.e., $C=I_{n_x}$.
Then, for every $\nu>J^\star$,
there exists a positive constant $\mu>0$ such that
\begin{equation}\label{eq:weakPL}
    \mu\left(J(K)-J^{\star}\right)
    \leq \mathrm{dist}(0,\partial J(K) ),\qquad
    \forall K\in \mathcal{K}_\nu,
\end{equation}
where $\partial J$ denotes the 
Fr\'echet subdifferential
of $J$ and
$J^\star = \min_{K \in \mathcal{K}} J(K)$.
\end{theorem}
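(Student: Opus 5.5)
The plan is to use the convex-lifting/partial-minimization strategy mentioned in the introduction, built on the non-strict bounded real lemma.

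\textbf{Step 1: convex lifting.} With $C=I_{n_x}$, write $\gamma=J(K)$ (or use $\gamma^2$). By the bounded real lemma, $J(K)\le\gamma$ holds if and only if there is $P\succ0$ satisfying a matrix inequality in $(K,P,\gamma)$. The change of variables $X=P^{-1}$, $Y=KX$ turns this into an LMI. Let
\[
\mathcal{F}=\{(Y,X,\gamma)\mid X\succ0,\ \text{LMI}(Y,X,\gamma)\succeq0\},
\]
which is convex, and set $h(Y,X,\gamma)=\gamma$. Using the non-strict version, we get
\[
J(K)=\min\{\gamma \mid (KX,X,\gamma)\in\mathcal{F}\},
\]
i.e., $J$ is a partial minimization of a linear function over a convex set, composed with the smooth map $(X,\gamma)\mapsto(KX,X,\gamma)$. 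We also have $J^\star=\min_{\mathcal F}\gamma$, attained at some lifted optimum $(Y^\star,X^\star,\gamma^\star)$ with $K^\star=Y^\star(X^\star)^{-1}$.

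\textbf{Step 2: derive the inequality from convexity in the lifted space.} Fix $K\in\mathcal{K}_\nu$. By compactness of $\mathcal{K}_\nu$ (\cref{lemma:basic-properties}), choose a lifted point $(Y,X,J(K))\in\mathcal F$ with $Y=KX$ and $X$ uniformly bounded: $\alpha I\preceq X\preceq\beta I$ for all $K\in\mathcal K_\nu$. Compactness of the sublevel sets together with continuity of a suitable selection (for example, the minimal or normalized Riccati/LMI solution) should give these bounds. Take the segment
\[
(Y_t,X_t,\gamma_t)=(1-t)(Y,X,J(K))+t(Y^\star,X^\star,J^\star),
\]
which lies in $\mathcal F$ by convexity. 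Then $K_t=Y_tX_t^{-1}$ satisfies $J(K_t)\le\gamma_t=J(K)-t(J(K)-J^\star)$. Also, $K_t$ is smooth in $t$ with
\[
\dot K_0=(Y^\star-KX^\star)X^{-1},
\]
so $\|\dot K_0\|_F\le D$ for a constant $D$ uniform over $\mathcal K_\nu$. Hence the one-sided directional derivative satisfies
\[
J'(K;\dot K_0)\le-(J(K)-J^\star).
\]

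\textbf{Step 3: convert to a subgradient bound.} By \cref{fact:lower_C2}, $J$ is subdifferentially regular and lower-$C^2$. Therefore
\[
J'(K;d)=\max_{g\in\partial J(K)}\langle g,d\rangle,
\]
with $\partial J(K)$ compact and convex. Applying minimax to $\min_{g\in\partial J(K)}\|g\|_F$ gives
\[
-\mathrm{dist}(0,\partial J(K))\,\|d\|_F\le \min_{g}\langle g,d\rangle\le \max_g\langle g,d\rangle.
\]
This direction is not quite the one needed, so I would instead use
\[
-\mathrm{dist}(0,\partial J(K))=\min_{\|d\|\le1}J'(K;d),
\]
which is a standard duality for sublinear support functions. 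With $d=\dot K_0/D$ this yields
\[
\mathrm{dist}(0,\partial J(K))\ge\frac{J(K)-J^\star}{D},
\]
i.e. $\mu=1/D$.

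\textbf{Main obstacle.} The hard step is Step 2's uniformity: I must guarantee a lifted certificate $X$ attaining exactly $\gamma=J(K)$ (hence the non-strict lemma) with $X$ bounded below and above uniformly on $\mathcal K_\nu$, since $D$ depends on $\|X^{-1}\|$ and $\|X^\star\|$. I would first try to bound $X$ through the Lyapunov-type inequality $X\succeq (A+BK)X(A+BK)^\tr+\gamma^{-1}B_wB_w^\tr$. Since $B_w$ has full row rank, this gives $X\succeq\gamma^{-1}B_wB_w^\tr\succ0$, a uniform lower bound. For the upper bound, I would choose the minimal solution, which is bounded by continuity on the compact set $\mathcal K_\nu$.
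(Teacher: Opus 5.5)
Your proof is correct, and it takes a genuinely different, essentially primal, route from the paper's.

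\textbf{The paper's route.} The paper argues on the subgradient side, in three steps:
\begin{enumerate}
\item It proves the lifted error bound $J_{\rm lft}(z)-J^\star\le\beta_z\,\mathrm{dist}(0,\partial J_{\rm lft}(z))$ by pulling subgradients of $J_{\rm lft}$ back to the convex $J_{\rm cvx}$ through the Jacobian of $\Pi=\Upsilon^{-1}$ (\cref{lemma:weak-PL-lft}).
\item It uses the parametric-minimization rule \cite[Theorem 10.13]{rockafellar2009variational} to lift every $G\in\partial J(K)$ to $(G,0,0)\in\partial J_{\rm lft}(K,\gamma_K,P_K)$ (\cref{lemma:partial-minimization}). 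This step needs level-boundedness and a regularity remark reconciling Fr\'echet and limiting subdifferentials.
\item It sets $\mu^{-1}=\max\beta_z$ over the compact lifted sublevel set.
\end{enumerate}

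\textbf{Your route and the comparison.} You instead push the convex segment forward to the feasible curve $K_t=Y_tX_t^{-1}$ and read off a direction $\dot K_0$ with slope at most $-(J(K)-J^\star)$. The core inequality is the same: the paper bounds $\langle D\Pi(\bar z)^\tr(G,0,0),\bar z-\bar z^\star\rangle$, which equals $-\langle G,\dot K_0\rangle$. Your version avoids both the chain rule for the extended-valued $J_{\rm lft}$ and Theorem 10.13. Only the $K$-block of $D\Pi(\bar z)(\bar z^\star-\bar z)$ enters, so you get an explicit and possibly smaller constant. In exchange, the paper's route yields a reusable error bound at every lifted point and fits the general convex-lifting framework it builds on.

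\textbf{Your ``main obstacle'' is not one.} With the scaling $X=\gamma P^{-1}$, every certificate at level $\gamma=J(K)\le\nu$ satisfies $X\succeq\gamma^{-1}B_wB_w^\tr$, exactly as you derived (the factor is $\gamma^{-2}$ if $X=P^{-1}$). This is the same bound $P\preceq\gamma^2(B_wB_w^\tr)^{-1}$ the paper uses for level-boundedness. So no special selection of the certificate is needed. Moreover, $\dot K_0=(Y^\star-KX^\star)X^{-1}$ involves only $X^{-1}$, so the upper bound on $X$ is never used. You should drop the appeal to continuity of a minimal solution, which would be delicate at the non-strict level $\gamma=J(K)$. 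You may simply take
\[
D=\frac{\nu}{\lambda_{\min}(B_wB_w^\tr)}\Bigl(\|Y^\star\|_F+\|X^\star\|\max_{K\in\mathcal K_\nu}\|K\|_F\Bigr),
\]
where $\|X^\star\|$ is the spectral norm.

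\textbf{Step 3 can be simplified.} It needs neither regularity nor minimax. Any Fr\'echet subgradient $g$ satisfies $\langle g,d\rangle\le\liminf_{t\downarrow0}(J(K+td)-J(K))/t$. Hence $\langle g,\dot K_0\rangle\le-(J(K)-J^\star)$, and Cauchy--Schwarz gives $\|g\|_F\ge(J(K)-J^\star)/D$ directly.
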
}

We detail the proof procedure in \cref{subsubsection:proof_wcvx_wpl}, where we also provide an explicit expression for the constant $\mu$ in \cref{eq:weakPL}. 
This property is closely tied to the classical LMI reformulation obtained from the bounded real lemma \cite{boyd1994linear,rantzer1996kalman}, which plays a central role in the proof.
Our argument is motivated by the recent \texttt{ECL} framework \cite{zheng2023benign} together with the result of \cite{umenberger2022globally,watanabe2026gradient}.
Finally, we note that \cref{theorem:weak-PL} immediately ensures that every stationary point is globally optimal. 

\begin{corollary}[Global optimality of stationary points]\label{corollary:stationary_global_optimality}
Under the same setting in \cref{theorem:weak-PL}, we have
$
0\in \partial J(K)
\Leftrightarrow 
K\in {\arg\min}_{K\in\mathcal{K}}J(K).
$
\end{corollary}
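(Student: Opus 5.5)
The equivalence follows almost directly from \cref{theorem:weak-PL}, together with the first-order necessary condition for Fr\'echet subdifferentials. We prove the two implications separately.

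\emph{($\Rightarrow$)} Suppose $0\in\partial J(K)$ for some $K\in\mathcal{K}$, and set $\nu:=J(K)$. If $\nu=J^\star$, then $K$ is already a global minimizer. Otherwise $\nu>J^\star$, and $K\in\mathcal{K}_\nu$ by construction, so \cref{theorem:weak-PL} provides $\mu>0$ such that
\[
\mu\bigl(J(K)-J^\star\bigr)\le \mathrm{dist}\bigl(0,\partial J(K)\bigr)=0 .
\]
Hence $J(K)\le J^\star$, which contradicts $\nu>J^\star$. In either case $J(K)=J^\star$, so $K\in\arg\min_{K\in\mathcal{K}}J(K)$.

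This argument uses that $J^\star$ is finite and attained. Both facts follow from coercivity in \cref{lemma:basic-properties}: sublevel sets are compact and nonempty under \cref{assumption:K_nonempty}, and $J$ is continuous on them. The only subtlety is that \cref{theorem:weak-PL} is stated only for $\nu>J^\star$. The case split above handles this.

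\emph{($\Leftarrow$)} Let $K$ be a global minimizer of $J$ over the open set $\mathcal{K}$. For all $Y\in\mathcal{K}$ we have $J(Y)-J(K)\ge 0$, so
\[
\liminf_{Y\to K,\,Y\neq K}\frac{J(Y)-J(K)-\langle 0,Y-K\rangle}{\|Y-K\|_F}\ge 0 .
\]
Since $\mathcal{K}$ is open, all $Y$ sufficiently close to $K$ lie in $\mathcal{K}$, so this liminf is taken over the correct set. By the definition of the Fr\'echet subdifferential in \cref{subsection:prelim-nonsmooth-nonconvex-opt}, it follows that $0\in\partial J(K)$. This is just the Fermat rule for Fr\'echet subdifferentials; no structure of $J$ beyond openness of its domain is needed.

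There is no real obstacle in this argument. All the substance lies in \cref{theorem:weak-PL}, which we assume; the rest is bookkeeping about the sublevel-set parameter $\nu$.
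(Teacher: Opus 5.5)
Your proof is correct and follows the paper's argument. The forward direction applies the weak PL inequality with $\mathrm{dist}(0,\partial J(K))=0$, and the converse is the Fermat rule, which the paper cites from Rockafellar--Wets (Theorem 10.1) and you verify directly from the definition; your explicit choice $\nu=J(K)$ with the case split makes precise a step the paper leaves implicit.
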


\begin{proof}
If $0 \in \partial J(K)$, then
$\mathrm{dist}(0,\partial J(K)) = 0$,
and thus \cref{eq:weakPL} implies $J(K)=J^\star$.
Conversely, if $K\in {\arg\min}_{K\in\mathcal{K}}J(K)$, then $K$ must be stationary, and thus
$0 \in \partial J(K)$ by \cite[Theorem 10.1]{rockafellar2009variational}.
\end{proof}

\subsection{Proof of the weak PL condition in \cref{theorem:weak-PL}}\label{subsubsection:proof_wcvx_wpl}

We here provide a proof of \cref{theorem:weak-PL} by exploiting 
a favorable \textit{partial minimization} 
and \textit{hidden convex} structure of the cost $J$.
Here, we assume that $K\in\mathcal{K}$ is not optimal, i.e., $J(K)>J^\star$. (For $K=K^\star$, the weak PL condition is obvious.)

By the celebrated bounded real lemma, 
it is known that
the problem \cref{eq:policy-optimization-main} with $C=I_{n_x}$ admits a useful reformulation with the additional Lyapunov variable $P$ and a scalar $\gamma$. 
We first recall the classical non-strict bounded real lemma \cite[Theorem 2]{rantzer1996kalman}.

{
\begin{proposition}[Non-strict bounded real lemma]\label{proposition:BRL}
Suppose \cref{assumption:stablizability,assumption:K_nonempty} hold.
Assume $C=I_{n_x}$.
Then, given $\gamma>0$, the following statements
are equivalent.
\begin{enumerate}
    \item 
    We have $K\in\mathcal{K}$ and $\|\bT_{zw}(K)\|_{\Hinf}\leq \gamma$.
    \item There exists a positive definite matrix $P\succ0$ such that
    \begin{equation}\label{eq:BRL}
        \begin{bmatrix}    
        A_K^{\tr} P A_K-P+Q+K^\tr RK & A^{\tr}_K P B_w\\ B_w^{\tr} P A_K & B_w^{\tr} P B_w-\gamma^2 I\end{bmatrix} \preceq 0
    \end{equation}
where $A_K=A+BK$.
\end{enumerate}
\end{proposition}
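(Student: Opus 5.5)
The plan is to prove the two implications separately. The direction (2)$\Rightarrow$(1) is elementary: it uses a Lyapunov argument and a dissipation inequality. The direction (1)$\Rightarrow$(2) goes through the strict bounded real lemma at level $\gamma+\varepsilon$, followed by a compactness argument as $\varepsilon\downarrow 0$. The full-row-rank assumption on $B_w$ in \cref{assumption:stablizability} supplies the upper bound needed for compactness, and $Q\succ 0$ supplies the lower bound that keeps the limit positive definite.

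\emph{(2)$\Rightarrow$(1).} Let $P\succ0$ satisfy \cref{eq:BRL}. Its $(1,1)$ block gives
\[
A_K^\tr P A_K - P \preceq -(Q+K^\tr RK) \preceq -Q \prec 0 .
\]
By the discrete Lyapunov theorem this yields $\rho(A_K)<1$, i.e., $K\in\mathcal K$. Next take any $\mathbf w\in\ell_2^{n_w}$ and the closed-loop trajectory $x_{t+1}=A_Kx_t+B_ww_t$ with $x_0=0$. Pre- and post-multiplying \cref{eq:BRL} by $[x_t^\tr,\ w_t^\tr]$ and its transpose gives the per-step dissipation inequality
\[
x_{t+1}^\tr P x_{t+1}-x_t^\tr P x_t + z_t^\tr z_t - \gamma^2 w_t^\tr w_t\le 0 .
\]
We sum over $t=0,\dots,T-1$. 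The term $x_0^\tr P x_0$ vanishes, and $x_T^\tr P x_T\ge 0$ can be dropped, so $\sum_{t<T}\|z_t\|^2\le\gamma^2\|\mathbf w\|_2^2$. Letting $T\to\infty$ gives $\|\mathbb T_{zw}(K)\|\le\gamma$. Since the induced $\ell_2$ norm coincides with the $\Hinf$ norm \cite[Theorem 4.4]{zhou1996robust}, this proves (1).

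\emph{(1)$\Rightarrow$(2).} Fix $K\in\mathcal K$ with $\|\bT_{zw}(K)\|_{\Hinf}\le\gamma$, and let $\varepsilon>0$. Then $\|\bT_{zw}(K)\|_{\Hinf}<\gamma+\varepsilon$ and $A_K$ is Schur. The standard \emph{strict} discrete-time bounded real lemma (via the associated Riccati equation or the KYP lemma) therefore provides $P_\varepsilon\succ0$ satisfying \cref{eq:BRL} strictly with $\gamma$ replaced by $\gamma+\varepsilon$. I expect this invocation to be the main obstacle, since it carries the nontrivial frequency-domain-to-LMI content; if it has to be justified rather than cited, I would construct $P_\varepsilon$ as the stabilizing solution of the $\Hinf$ Riccati equation.

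It remains to pass to the limit as $\varepsilon\downarrow0$, which requires uniform bounds on $P_\varepsilon$.

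Lower bound. The $(1,1)$ block gives $A_K^\tr P_\varepsilon A_K - P_\varepsilon \preceq -(Q+K^\tr RK)$. Iterating this Lyapunov inequality and using that $A_K$ is Schur yields
\[
P_\varepsilon\succeq \sum_{t\ge0}(A_K^t)^\tr(Q+K^\tr RK)A_K^t\succeq Q .
\]

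Upper bound. The $(2,2)$ block gives $B_w^\tr P_\varepsilon B_w\preceq(\gamma+\varepsilon)^2I$. Since $B_w$ has full row rank, $B_wB_w^\tr\succ0$, so any $x$ can be written as $x=B_wy$ with $y=B_w^\tr(B_wB_w^\tr)^{-1}x$. Then
\[
x^\tr P_\varepsilon x = y^\tr B_w^\tr P_\varepsilon B_w y \le (\gamma+\varepsilon)^2\|y\|^2 = (\gamma+\varepsilon)^2\, x^\tr(B_wB_w^\tr)^{-1}x ,
\]
which means $P_\varepsilon\preceq(\gamma+1)^2(B_wB_w^\tr)^{-1}$ for $\varepsilon\le1$.

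Limit. The family $\{P_\varepsilon\}_{\varepsilon\in(0,1]}$ is therefore bounded, so some subsequence converges to a matrix $P$ with $P\succeq Q\succ0$. The left-hand side of \cref{eq:BRL} depends continuously on $(P,\gamma)$, and the negative semidefinite cone is closed. Hence the limit $P$ satisfies the non-strict inequality \cref{eq:BRL} at level $\gamma$, which establishes (2).
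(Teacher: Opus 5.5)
Your proof is correct, but it takes a different route from the paper for the hard direction.

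\textbf{The paper's route.} The paper invokes Rantzer's non-strict KYP lemma \cite[Theorem 2]{rantzer1996kalman}, whose hypothesis is controllability of $(A_K,B_w)$; this is automatic because $B_w$ has full row rank. For Schur $A_K$, that theorem gives $\|\bT_{zw}(K)\|_{\Hinf}\le\gamma$ if and only if some symmetric $P$ satisfies \cref{eq:BRL}. The only extra work is a Lyapunov argument on the $(1,1)$ block. In one direction it upgrades the symmetric $P$ to $P\succ0$. In the other it deduces $\rho(A_K)<1$ from $P\succ0$.

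\textbf{Your route.} You prove $(2)\Rightarrow(1)$ from scratch by a dissipation and telescoping argument, with no KYP machinery at all. You obtain $(1)\Rightarrow(2)$ from the standard \emph{strict} bounded real lemma at level $\gamma+\varepsilon$, followed by a compactness limit.

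\textbf{What each buys.} Your route replaces the non-strict KYP lemma, which is the more delicate result and is where controllability is genuinely needed, with the strict version plus elementary estimates. It also makes clear where the full-row-rank assumption does its work. In the paper it only certifies controllability. In your argument it supplies the uniform bound $P_\varepsilon\preceq(\gamma+1)^2(B_wB_w^\tr)^{-1}$; this is the same bound the paper later uses in the appendix to show level-boundedness of $\mathcal{L}_{\mathrm{lft},\nu}$. Positivity $P\succeq Q\succ0$ then comes directly out of the limit, with no separate step. The paper's version is shorter because it delegates the substance to a citation; yours is more self-contained.
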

\begin{proof}
Notice that
$(A_K,B_w)$ is always controllable, since $B_w$ is full row rank.
For $K\in\mathcal{K}$,
\cite[Theorem 2]{rantzer1996kalman} guarantees that
 $\|\bT_{zw}(K)\|_{\Hinf}\leq \gamma$ if and only if there exists a symmetric matrix $P$ satisfying \cref{eq:BRL}.
 Then,
 if (1) holds, the inequality \cref{eq:BRL}
 implies
\begin{equation*}
A_K^\tr P A_K-P+Q+K^\tr RK\preceq0
\end{equation*}
By the Schur stability of $A_K$ and $Q+K^\tr RK\succ 0$, we have $P\succ0$.
This implies (1)$\Rightarrow$(2).
Conversely, if (2) holds,
by $P\succ0$, the Schur stability of $A_K$ is also guaranteed by $A_K^\tr PA_K-P\preceq -Q-K^\tr RK\prec 0$,
which implies (2)$\Rightarrow$(1).
\end{proof}}

\Cref{proposition:BRL} allows us to rewrite the problem \cref{eq:policy-optimization-main} 
as a partial minimization 
of a \textit{hidden convex} function.
To see this, define the set 
\begin{equation*}
    \mathcal{L}_\mathrm{lft}=
    \left\{
    (K,\gamma,P)
    \mid
    P\succ 0,\,\gamma>0,\,
    \text{\cref{eq:BRL} holds}
    \right\}.
\end{equation*}
Then, problem \cref{eq:policy-optimization-main}
 can be rewritten into
\begin{equation}\label{eq:Hinf_opt_lif}
    \min_{K,\gamma,P}\quad \gamma \quad
    \text{subject to}\quad
    (K,\gamma,P)\in \mathcal{L}_\mathrm{lft}.
\end{equation}
To write \cref{eq:Hinf_opt_lif} into an unconstrained form, we define the lifted function $J_\mathrm{lft}$
\begin{equation}\label{eq:J_lift}
    J_\mathrm{lft}(K,\gamma,P)
    :=  
    \gamma + \delta_{\mathcal{L}_\mathrm{lft}}
    (K,\gamma,P),
\end{equation}
where $\delta_{\mathcal{L}_\mathrm{lft}}
    (K,\gamma,P)$ is the indicator function for the set $\mathcal{L}_\mathrm{lft}$,
    i.e., $\delta_{\mathcal{L}_\mathrm{lft}}
    (K,\gamma,P)=0$ for $ (K,\gamma,P)\in\mathcal{L}_\mathrm{lft}$ and 
     $\delta_{\mathcal{L}_\mathrm{lft}}
    (K,\gamma,P)=\infty$ if $ (K,\gamma,P)\notin\mathcal{L}_\mathrm{lft}$.
This allows us to represent the $\mathcal{H}_\infty$ function $J$ as
\begin{equation}\label{eq:Jk_parmin}
    J(K) = \min_{\gamma
    ,P}
    J_\mathrm{lft}(K,\gamma,P), \qquad \forall K \in \mathcal{K}.
\end{equation}
In other words, we can recover $J(\cdot)$ from $J_\mathrm{lft}(\cdot,\gamma,P)$ by eliminating $(\gamma,P)$ via the partial minimization.
The existence of such a partial minimizer for $K\in\mathcal{K}$ is guaranteed by \cref{lemma:basic-properties,proposition:BRL}.
Moreover, we can show the compactness of the sublevel set of $J_\mathrm{lft}(\cdot,\cdot,\cdot)$
{\cite[Proposition 3]{wang2026zeroth}}. 
\begin{lemma}\label{lemma:lifted_set-compactness}
    Suppose \cref{assumption:stablizability,assumption:K_nonempty} hold.
Assume $C=I_{n_x}$.
Then, 
for any $\nu>J^\star$,
the set $ \mathcal{L}_\mathrm{lft,\nu} = 
    \{
    (K,\gamma,P)
    \in \mathcal{L}_\mathrm{lft}
    \mid
    J_\mathrm{lft}(K,\gamma,P)
    \leq \nu
    \}$
    is compact.
\end{lemma}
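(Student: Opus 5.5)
To prove compactness of $\mathcal{L}_{\mathrm{lft},\nu}$ we show that it is bounded and closed in $\mathbb{R}^{n_u\times n_x}\times\mathbb{R}\times\mathbb{S}^{n_x}$.

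\emph{Boundedness.} Take $(K,\gamma,P)\in\mathcal{L}_{\mathrm{lft},\nu}$, so $\gamma\le\nu$. By \cref{proposition:BRL}, $K\in\mathcal{K}$ and $J(K)\le\gamma\le\nu$, hence $K\in\mathcal{K}_\nu$. This set is compact by \cref{lemma:basic-properties}, so $K$ is bounded. For $P$, the $(2,2)$ block of \cref{eq:BRL} gives $B_w^\tr P B_w\preceq\gamma^2 I\preceq \nu^2 I$. Since $B_w$ has full row rank, $B_wB_w^\tr\succeq \sigma_{\min}(B_w)^2 I$. Combining this with $P\succeq 0$ yields $\lambda_{\max}(P)\le \nu^2/\sigma_{\min}(B_w)^2$. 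Indeed, for any unit $x$ we may write $x=B_w y$ with $\|y\|\le 1/\sigma_{\min}(B_w)$, so $x^\tr P x=y^\tr B_w^\tr P B_w y\le \nu^2\|y\|^2$. Thus $P$ is bounded, and $0<\gamma\le\nu$ bounds $\gamma$.

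\emph{Closedness.} This is the main obstacle, because $\mathcal{L}_{\mathrm{lft}}$ is defined with the strict conditions $P\succ0$ and $\gamma>0$, and $K\in\mathcal{K}$ is an open condition. Let $(K_k,\gamma_k,P_k)\to(\bar K,\bar\gamma,\bar P)$ with each iterate in $\mathcal{L}_{\mathrm{lft},\nu}$.
\begin{enumerate}
\item Since $\mathcal{K}_\nu$ is compact, $\bar K\in\mathcal{K}_\nu\subseteq\mathcal{K}$, so $A_{\bar K}$ is Schur stable.
\item Since the non-strict inequality \cref{eq:BRL} is a closed condition in $(K,\gamma,P)$, it persists in the limit, and $\bar P\succeq0$.
\item For $\bar\gamma$: we have $\gamma_k\ge J(K_k)\ge J^\star>0$. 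Here $J^\star>0$ because the transfer matrix is nonzero ($Q\succ0$ and $B_w\neq0$). Hence $\bar\gamma\ge J^\star>0$.
\item For $\bar P\succ 0$, we repeat the argument of \cref{proposition:BRL}. The $(1,1)$ block gives $A_{\bar K}^\tr\bar P A_{\bar K}-\bar P\preceq -(Q+\bar K^\tr R\bar K)\prec0$. Together with Schur stability of $A_{\bar K}$, this gives $\bar P\succeq\sum_{t\ge0}(A_{\bar K}^\tr)^t(Q+\bar K^\tr R\bar K)A_{\bar K}^t\succ 0$.
\end{enumerate}
Therefore $(\bar K,\bar\gamma,\bar P)\in\mathcal{L}_{\mathrm{lft}}$ with $\bar\gamma\le\nu$, which proves closedness. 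Boundedness and closedness together give compactness.

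As an alternative to steps 1–4, one could obtain $\bar P\succ0$ directly from the uniform lower bound $P\succeq Q$, read off the $(1,1)$ block via the Lyapunov sum. Either route works; the essential point is that coercivity (compactness of $\mathcal{K}_\nu$) prevents $\bar K$ from reaching $\partial\mathcal{K}$.
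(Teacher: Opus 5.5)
Your proof is correct, but it takes a partly different route from the paper. You and the paper bound $P$ the same way, via $B_w^\tr P B_w\preceq\gamma^2 I$ and the full row rank of $B_w$.

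The difference is in how you handle $K$ and the boundary. To bound $K$ and keep limit points inside $\mathcal{L}_{\mathrm{lft}}$, you use the frequency-domain side. \cref{proposition:BRL} places $K$ in $\mathcal{K}_\nu$, which is compact by the coercivity of $J$ in \cref{lemma:basic-properties}. You then get $\bar\gamma>0$ from $J^\star>0$.

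The paper cites \cite{wang2026zeroth} for the lemma and gives its own argument in \cref{appendix:partial-minimization}, which stays entirely inside the matrix inequality. The Schur complement of the first, third, fourth and fifth block rows gives $P\succeq Q+K^\tr RK$. Together with $P\preceq\gamma^2(B_wB_w^\tr)^{-1}$, this bounds $K$ directly through $\lambda_{\min}(R)\|K\|_F^2\le\operatorname{tr}(P)\le\nu^2\operatorname{tr}\bigl((B_wB_w^\tr)^{-1}\bigr)$. The uniform bounds $P\succeq Q$ and $\gamma^2\ge\lambda_{\max}(B_w^\tr QB_w)$ then rule out feasible sequences approaching $P\nsucc0$ or $\gamma=0$.

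The paper's route gives explicit constants and does not need the coercivity result imported from \cite{guo2023complexity}. Because of this, your closing remark that coercivity is ``the essential point'' overstates things. Once the limit satisfies \cref{eq:BRL} with $\bar P\succeq Q\succ0$, \cref{proposition:BRL} already gives $\bar K\in\mathcal{K}$. Your step 1 and the Lyapunov sum in step 4 are therefore dispensable, as your own alternative remark suggests.

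Your route is shorter once \cref{lemma:basic-properties} is taken as given. It also writes out the closedness step in full, which the paper only sketches.
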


Next, we show the hidden convex structure of this lifted function $J_\mathrm{lft}$.
In particular, 
the matrix inequalities \cref{eq:BRL} can be equivalently rewritten into a convex LMI via the Schur complement and a classical change of variables.
\begin{proposition}[Hidden convexity of $J_\mathrm{lft}$] \label{proposition:F_lft}
Suppose \cref{assumption:stablizability,assumption:K_nonempty} hold.
Assume $C=I_{n_x}$.
 Define the $C^\infty$ invertible mapping $\Upsilon:\mathbb{R}^{n_u\times n_x}\times(0,\infty)\times\mathbb{S}_{++}^{n_x}\to (0,\infty)\times\mathbb{R}^{n_u\times n_x}\times\mathbb{S}_{++}^{n_x}$:
\begin{equation*}
    \Upsilon(K,\gamma,P) = \left(\gamma,
    K(P/\gamma)^{-1},(P/\gamma)^{-1}
    \right).
\end{equation*}
Also, define 
the convex set
    $$\mathcal{F}_\mathrm{cvx}=
    \{
    (\gamma,Y,X)\mid
    X\succ 0,\,\gamma>0,\,
    \mathrm{LMI}(\gamma,Y,X)\succeq 0
    \},$$
where 
\begin{align}\label{eq:LMI-bounded-real-lemma}
    \mathrm{LMI}(\gamma,Y,X)= 
  \begin{bmatrix}
X & 0 & (A X+B Y)^\tr & X Q^{1 / 2} & Y^\tr R^{1 / 2} \\ 0 & \gamma I & B_w^\tr & 0 & 0 \\ A X+B Y & B_w & X & 0 & 0 \\ Q^{1 / 2} X & 0 & 0 & \gamma I & 0 \\ R^{1 / 2} Y & 0 & 0 & 0 & \gamma I
\end{bmatrix}.
\end{align}   
Then, we have $\Upsilon(\mathcal{L}_\mathrm{lft})
    = \mathcal{F}_\mathrm{cvx}.$
\end{proposition}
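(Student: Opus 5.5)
The plan is to show, for each fixed $(K,\gamma,P)$ with $\gamma>0$ and $P\succ 0$, that the matrix inequality \cref{eq:BRL} holds if and only if $\mathrm{LMI}(\gamma,Y,X)\succeq 0$, where $(\gamma,Y,X)=\Upsilon(K,\gamma,P)$, i.e., $X=(P/\gamma)^{-1}=\gamma P^{-1}$ and $Y=KX$. Since $\Upsilon$ is a bijection between $\mathbb{R}^{n_u\times n_x}\times(0,\infty)\times\mathbb{S}^{n_x}_{++}$ and $(0,\infty)\times\mathbb{R}^{n_u\times n_x}\times\mathbb{S}^{n_x}_{++}$, this pointwise equivalence gives $\Upsilon(\mathcal{L}_\mathrm{lft})=\mathcal{F}_\mathrm{cvx}$. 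The inverse is explicit: $P=\gamma X^{-1}$ and $K=YX^{-1}$, and $\gamma>0$, $X\succ0$ correspond exactly to $\gamma>0$, $P\succ0$. The argument has two steps: a Schur complement, then a congruence.

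\textbf{Step 1 (Schur complement).} I rewrite \cref{eq:BRL} as
\[
\begin{bmatrix} P & 0\\ 0 & \gamma^2 I\end{bmatrix}
-\begin{bmatrix} A_K^\tr \\ B_w^\tr\end{bmatrix} P \begin{bmatrix} A_K & B_w\end{bmatrix}
-\begin{bmatrix} Q^{1/2} & K^\tr R^{1/2}\\ 0 & 0\end{bmatrix}\begin{bmatrix} Q^{1/2} & 0\\ R^{1/2}K & 0\end{bmatrix}\succeq 0 .
\]
The subtracted term equals $N^\tr\,\mathrm{diag}(P^{-1},I,I)^{-1}N$, where
\[
N=\begin{bmatrix} A_K & B_w\\ Q^{1/2} & 0\\ R^{1/2}K & 0\end{bmatrix}.
\]
The block $\mathrm{diag}(P^{-1},I,I)$ is positive definite, so the non-strict Schur complement applies. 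It shows that \cref{eq:BRL} is equivalent to $M\succeq 0$, where $M$ is the $5\times5$ block matrix with
\begin{itemize}
\item diagonal blocks $(P,\ \gamma^2 I,\ P^{-1},\ I,\ I)$;
\item off-diagonal blocks $(3,1)=A_K$, $(3,2)=B_w$, $(4,1)=Q^{1/2}$, $(5,1)=R^{1/2}K$;
\item all other off-diagonal blocks zero, with $M$ symmetric.
\end{itemize}
This is the step that needs the most care. Only non-strict inequalities appear, so I must use the version of the Schur complement with an invertible positive definite block; that version is exact in both directions, and no strictness is required.

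\textbf{Step 2 (congruence).} I apply the congruence $D M D$ with the invertible symmetric matrix
\[
D=\mathrm{diag}\bigl(\sqrt{\gamma}P^{-1},\ \gamma^{-1/2}I,\ \sqrt{\gamma}I,\ \sqrt{\gamma}I,\ \sqrt{\gamma}I\bigr).
\]
A direct blockwise check, using $\sqrt{\gamma}P^{-1}=X/\sqrt{\gamma}$, gives:
\begin{itemize}
\item $(1,1)=\gamma P^{-1}=X$ and $(3,3)=\gamma P^{-1}=X$;
\item $(2,2)=\gamma I$, $(4,4)=\gamma I$, and $(5,5)=\gamma I$;
\item $(3,1)=\gamma A_KP^{-1}=A_KX=AX+BY$ and $(3,2)=B_w$;
\item $(4,1)=Q^{1/2}X$ and $(5,1)=R^{1/2}KX=R^{1/2}Y$.
\end{itemize}
This is exactly $\mathrm{LMI}(\gamma,Y,X)$ in \cref{eq:LMI-bounded-real-lemma}. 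Since congruence by an invertible matrix preserves positive semidefiniteness in both directions, $M\succeq0$ holds if and only if $\mathrm{LMI}(\gamma,Y,X)\succeq0$.

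Combining the two steps with the bijectivity of $\Upsilon$ yields both inclusions: $\Upsilon(\mathcal{L}_\mathrm{lft})\subseteq\mathcal{F}_\mathrm{cvx}$, and every point of $\mathcal{F}_\mathrm{cvx}$ is the image of $(YX^{-1},\gamma,\gamma X^{-1})\in\mathcal{L}_\mathrm{lft}$. The smoothness of $\Upsilon$ and of its inverse is immediate from the smoothness of matrix inversion on $\mathbb{S}^{n_x}_{++}$.
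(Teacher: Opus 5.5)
Your proof is correct and is essentially the paper's argument: a non-strict Schur complement gives a five-block inequality, a diagonal congruence turns it into $\mathrm{LMI}(\gamma,Y,X)$, and the bijectivity of $\Upsilon$ (which you make explicit and the paper leaves implicit) gives the set equality. The only difference is cosmetic: your intermediate matrix has $P^{-1}$ in the $(3,3)$ block with off-diagonal blocks $A_K$ and $B_w$, so you scale the third slot by $\sqrt{\gamma}I$, whereas the paper's matrix has $P$, $PA_K$, $PB_w$ there and scales that slot by $\sqrt{\gamma}P^{-1}$.
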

\begin{proof}
By Schur complement, the matrix inequality \cref{eq:BRL} is equivalent to
\begin{equation}\label{eq:BLR-five-block}
\begin{bmatrix}
    P & 0 & A_K^\tr P & Q^{1 / 2} & K^{\tr} R^{1 / 2} \\ 0 & \gamma^2 I & B_w^{\tr} P & 0 & 0 \\ P A_K & P B_w & P & 0 & 0 \\ Q^{1 / 2} & 0 & 0 & I & 0 \\ R^{1 / 2} K & 0 & 0 & 0 & I
\end{bmatrix}\succeq0.
\end{equation}
By pre- and post-multiplying this by 
$$\operatorname{diag}\left(\sqrt{\gamma} P^{-1}, \frac{1}{\sqrt{\gamma}} I,\sqrt{\gamma} P^{-1}, \sqrt{\gamma} I, \sqrt{\gamma} I\right)$$
equivalently yields 
$\mathrm{LMI}(\gamma,K(P/\gamma)^{-1},(P/\gamma)^{-1})\succeq0$
 with \cref{eq:LMI-bounded-real-lemma}.
 Then, it is now clear that  $\Upsilon(\mathcal{L}_\mathrm{lft})
    = \mathcal{F}_\mathrm{cvx}.$
\end{proof}

From the convex set $\mathcal{F}_{\mathrm{cvx}}$ in \cref{proposition:F_lft}, we define the following convex function:
\begin{equation} \label{eq:J_cvx}
    J_\mathrm{cvx}(\gamma,Y,X)
    := 
    \gamma
    + \delta_{\mathcal{F}_\mathrm{cvx}}(\gamma,Y,X). 
\end{equation}
From \cref{proposition:F_lft}, we immediately observe that 
\begin{align*}
    J_\mathrm{lft}(K,\gamma,P) &= 
    (J_\mathrm{cvx}\circ \Upsilon)
    (K,\gamma,P), \qquad \forall(K,\gamma,P)\in \mathcal{L}_\mathrm{lft} \\
J_\mathrm{cvx}(\gamma,Y,X) &= 
    (J_\mathrm{lft}\circ \Upsilon^{-1})
    (\gamma,Y,X) \qquad \forall (\gamma,Y,X)\in\mathcal{F}_\mathrm{cvx}.
\end{align*}
This implies that
$J_\mathrm{lft}$ admits an equivalent convex reformulation by the invertible mapping $\Upsilon$.

We will derive the weak PL inequality \cref{eq:weakPL} using the partial minimization \cref{eq:Jk_parmin} and the convex representation \cref{eq:J_cvx}.   
We first summarize a simple consequence of the hidden convexity of the lifted
formulation. For a convex function, the usual subgradient lower bound implies
a pointwise error bound in terms of the minimal-norm subgradient. Since
$J_{\rm lft}$ is obtained from the convex function $J_{\rm cvx}$ through the
smooth invertible change of variables $\Upsilon$, the same type of bound holds
for $J_{\rm lft}$, up to the local conditioning of the inverse map
$\Pi=\Upsilon^{-1}$.
\begin{lemma}\label{lemma:weak-PL-lft}
Suppose \cref{assumption:stablizability,assumption:K_nonempty} hold and
$C=I_{n_x}$.  
Let $
    z:=(K,\gamma,P)\in\mathcal L_{\rm lft}.
$ 
Assume $K$ is not optimal, i.e., $J(K)>J^\star$.
Then, we have 
\begin{equation} \label{eq:weak-PL-lft-lemma}
    J_{\rm lft}(z)-J^\star
    \le
    \beta_z\,\operatorname{dist}(0,\partial J_{\rm lft}(z)),
\end{equation}
where we may choose
$
    \beta_z
    :=
    \|D\Pi(\Upsilon(z))\|\,
    \|\Upsilon(z)-\bar z^\star\|,
    \,
    \Pi:=\Upsilon^{-1},
$ 
and $\bar z^\star\in\arg\min_{ z\in\mathcal F_{\rm cvx}}J_{\rm cvx}(z)$. Here $D\Pi(\Upsilon(z))$ denotes the Jacobian of $\Pi$ at $\Upsilon(z)$. 
\end{lemma}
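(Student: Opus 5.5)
The plan is to transfer the elementary error bound for convex functions from $J_{\rm cvx}$ to $J_{\rm lft}$ through the smooth invertible change of variables $\Upsilon$ of \cref{proposition:F_lft}.

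\emph{Step 1 (convex bound).} First check that $\min_{\mathcal F_{\rm cvx}} J_{\rm cvx}=J^\star$.
\begin{itemize}
\item Since $\Upsilon(\mathcal L_{\rm lft})=\mathcal F_{\rm cvx}$ and $J_{\rm lft}=J_{\rm cvx}\circ\Upsilon$ on $\mathcal L_{\rm lft}$, the two functions have the same set of values.
\item By \cref{eq:Jk_parmin}, $\inf J_{\rm lft}=\inf_K J(K)=J^\star$, and this value is attained.
\end{itemize}
Write $w:=\Upsilon(z)\in\mathcal F_{\rm cvx}$ and let $\bar z^\star$ be a minimizer of $J_{\rm cvx}$. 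The function $J_{\rm cvx}$ is a proper convex extended-valued function, so its Fréchet subdifferential coincides with the convex subdifferential \cite[Proposition 8.12]{rockafellar2009variational}. Hence for every $g\in\partial J_{\rm cvx}(w)$,
\[
J^\star=J_{\rm cvx}(\bar z^\star)\ge J_{\rm cvx}(w)+\langle g,\bar z^\star-w\rangle,
\]
which gives $J_{\rm lft}(z)-J^\star=J_{\rm cvx}(w)-J^\star\le \|g\|\,\|w-\bar z^\star\|$.

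\emph{Step 2 (chain rule).} Relate $\partial J_{\rm lft}(z)$ to $\partial J_{\rm cvx}(w)$. The map $\Upsilon$ is a $C^\infty$ diffeomorphism between open sets: its domain $\mathbb{R}^{n_u\times n_x}\times(0,\infty)\times\mathbb S^{n_x}_{++}$ is open and contains $\mathcal L_{\rm lft}$, and its inverse $\Pi$ is explicit, namely $\gamma$, $P=\gamma X^{-1}$, $K=YX^{-1}$. Fréchet subdifferentials transform exactly under such maps: $\partial(J_{\rm cvx}\circ\Upsilon)(z)=D\Upsilon(z)^*\,\partial J_{\rm cvx}(w)$, by \cite[Exercise 10.7]{rockafellar2009variational}. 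One can also verify this directly from the liminf definition. Using a first-order expansion $\Upsilon(y)=\Upsilon(z)+D\Upsilon(z)(y-z)+o(\|y-z\|)$ and the corresponding expansion for $\Pi$, the difference quotients in the two definitions are comparable, because $\|\Upsilon(y)-w\|$ and $\|y-z\|$ are within constant factors of each other near $z$. Consequently, for every $h\in\partial J_{\rm lft}(z)$ the element
\[
g:=D\Upsilon(z)^{-*}h=D\Pi(w)^*h
\]
lies in $\partial J_{\rm cvx}(w)$, and $\|g\|\le\|D\Pi(w)\|\,\|h\|$.

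\emph{Step 3 (combine).} Inserting the bound on $\|g\|$ from Step 2 into Step 1 gives
\[
J_{\rm lft}(z)-J^\star\le \|D\Pi(\Upsilon(z))\|\,\|\Upsilon(z)-\bar z^\star\|\,\|h\|.
\]
Taking the infimum over $h\in\partial J_{\rm lft}(z)$ yields \cref{eq:weak-PL-lft-lemma} with the stated $\beta_z$. If $\partial J_{\rm lft}(z)=\emptyset$, the right-hand side is $+\infty$ and the inequality is trivial. The hypothesis $J(K)>J^\star$ is only needed so that the left-hand side is positive and the bound is meaningful.

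The main obstacle is Step 2. The function $J_{\rm lft}$ is extended-valued, being an indicator of a nonconvex set plus $\gamma$, so the smooth chain rule for Fréchet subgradients must be justified for nonfinite functions. I would handle this by the direct two-sided comparison of the liminf quotients, using the $C^1$ property of both $\Upsilon$ and $\Pi$.
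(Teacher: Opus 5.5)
Your proposal is correct and follows the paper's proof essentially step for step. The steps are: the convex subgradient bound for $J_{\rm cvx}$ at $\Upsilon(z)$; the transfer of an arbitrary $G\in\partial J_{\rm lft}(z)$ to $D\Pi(\Upsilon(z))^\tr G\in\partial J_{\rm cvx}(\Upsilon(z))$ by the smooth chain rule; and an infimum over $G$. For the chain rule the paper invokes \cite[Theorem 10.6]{rockafellar2009variational} for $J_{\rm cvx}=J_{\rm lft}\circ\Pi$, which gives exactly the one-sided inclusion you need. Your extra checks that $\min J_{\rm cvx}=J^\star$, and that the composition identity holds on an open neighborhood so the extended-valued chain rule applies, are details the paper leaves implicit.
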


\begin{proof}
Denote $\bar z=\Upsilon(z)$.
Since $J_{\rm cvx}$ is convex, for any
$\bar G\in\partial J_{\rm cvx}(\bar z)$ and any minimizer $\bar z^\star$,
\[
    J_{\rm cvx}(\bar z)-J^\star
    \le
    \langle \bar G,\bar z-\bar z^\star\rangle
    \le
    \|\bar G\|\,\|\bar z-\bar z^\star\|.
\]
Recall that
$
    J_{\rm lft}=J_{\rm cvx}\circ\Upsilon$, 
    $J_{\rm cvx}=J_{\rm lft}\circ\Pi$, and
    $\Pi=\Upsilon^{-1}$. 
By the chain rule for subdifferential \cite[Theorem 10.6]{rockafellar2009variational}, for any
$G\in\partial J_{\rm lft}(z)$, we have
\[
    \bar G:=D\Pi(\bar z)^\tr G
    \in
    \partial J_{\rm cvx}(\bar z).
\]
Therefore, 
\[
\begin{aligned}
    J_{\rm lft}(z)-J^\star
    =
    J_{\rm cvx}(\bar z)-J^\star  
    &\le
    \|\bar G\|\,\|\bar z-\bar z^\star\|  \\
    &\le
    \|D\Pi(\bar z)\|\,\|G\|\,\|\bar z-\bar z^\star\|.
\end{aligned}
\]
Taking the infimum over all $G\in\partial J_{\rm lft}(z)$ gives
\[
    J_{\rm lft}(z)-J^\star
    \le
    \|D\Pi(\bar z)\|\,\|\bar z-\bar z^\star\|\,
    \operatorname{dist}(0,\partial J_{\rm lft}(z)).
\]
This completes the proof.
\end{proof}
In \cref{subsection:proof-weakPL-lft}, we provide a more explicit computation for the Jacobian $D\Pi(\cdot)$. 
Next, from \cref{proposition:BRL}, for $K\in\mathcal{K}$ and $\hat \gamma=J(K)$,  
there exists a positive definite matrix $\hat P\succ 0$ such that
\begin{equation*}
     J(K) = J_\mathrm{lft}(K,\hat \gamma,\hat P),\qquad
     (K,\hat \gamma,\hat P)\in\mathcal{L}_\mathrm{lft}.
\end{equation*}
In other words, a partial minimizer
   $ (\hat \gamma,\hat P)\in \arg\min_{\gamma, P} J_\mathrm{lft}(K, \gamma, P)$ exists for any $K \in \mathcal{K}$.
We can evaluate the subdifferential of $J_\mathrm{lft}$ at such a partial minimizer using the subdifferential $\partial J(K)$.
\begin{lemma}\label{lemma:partial-minimization}
Suppose \cref{assumption:stablizability,assumption:K_nonempty} hold, and $C=I_{n_x}$.
Then, for $K\in\mathcal{K}$, we have
\begin{equation}\label{eq:partial-min-subgradient}
    \partial J(K)\times \{0\}
    \times \{0\}
    \subseteq 
    \bigcup_{(\gamma_K,P_K)\in  {\arg\min}_{\gamma,P} 
     J_{\mathrm{lft}}(K,\gamma,P)
     }
    \partial J_\mathrm{lft}(K,\gamma_K ,P_K).
\end{equation}
\end{lemma}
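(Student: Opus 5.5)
The plan is to show the stronger statement that \emph{every} partial minimizer works. Fix $K\in\mathcal K$ and take any $(\gamma_K,P_K)\in\arg\min_{\gamma,P}J_{\rm lft}(K,\gamma,P)$. Such a minimizer exists: by \cref{proposition:BRL} with $\gamma=J(K)$ there is $\hat P\succ 0$ with $J_{\rm lft}(K,J(K),\hat P)=J(K)$. Then take any $G\in\partial J(K)$. I will check directly from the definition of the Fr\'echet subdifferential that $(G,0,0)\in\partial J_{\rm lft}(K,\gamma_K,P_K)$. No chain rule or regularity result for partial minimization is needed. The argument uses only two facts that follow from \cref{eq:Jk_parmin}:
\begin{itemize}
\item $J_{\rm lft}(K',\gamma',P')\ge J(K')$ for all $(K',\gamma',P')$, with the convention $J(K')=+\infty$ for $K'\notin\mathcal K$;
\item $J_{\rm lft}(K,\gamma_K,P_K)=J(K)$.
\end{itemize}
The first holds because feasibility in $\mathcal L_{\rm lft}$ forces $K'\in\mathcal K$ and $\gamma'\ge J(K')$ by \cref{proposition:BRL}, and otherwise the indicator is $+\infty$.

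Write $z=(K,\gamma_K,P_K)$ and $z'=(K',\gamma',P')\to z$ with $z'\neq z$. Since $\mathcal K$ is open, $K'\in\mathcal K$ for $z'$ close to $z$, so $J(K')$ is finite. Using the two facts above,
\[
\frac{J_{\rm lft}(z')-J_{\rm lft}(z)-\langle G,K'-K\rangle}{\|z'-z\|}
\;\ge\;
\frac{N(K')}{\|z'-z\|},
\qquad N(K'):=J(K')-J(K)-\langle G,K'-K\rangle .
\]
The inner product with $(G,0,0)$ only sees the $K$-block, so the $\gamma,P$ components contribute nothing to it.

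The step needing care is the mismatch of denominators: $\|z'-z\|\ge\|K'-K\|$, but the numerator $N(K')$ may be negative. I would split into cases.
\begin{itemize}
\item If $N(K')\ge 0$, the ratio is $\ge 0$.
\item If $N(K')<0$, then necessarily $K'\ne K$, and a larger denominator only shrinks the magnitude. Hence the ratio is $\ge N(K')/\|K'-K\|$.
\end{itemize}
Thus in all cases the ratio is at least $\min\{0,\,N(K')/\|K'-K\|\}$, where the second term is used only when $K'\neq K$. As $z'\to z$ we have $K'\to K$, and $G\in\partial J(K)$ gives $\liminf_{K'\to K,\,K'\neq K} N(K')/\|K'-K\|\ge 0$. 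Therefore the $\liminf$ of the left side is $\ge 0$, i.e.\ $(G,0,0)\in\partial J_{\rm lft}(z)$.

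Since $(\gamma_K,P_K)$ was an arbitrary partial minimizer, $(G,0,0)$ in fact lies in the intersection of these subdifferentials. This yields \cref{eq:partial-min-subgradient} with the union replaced by an intersection, which is stronger than needed.
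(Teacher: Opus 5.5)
Your proof is correct, and it takes a more elementary route than the paper. The paper applies the parametric-minimization theorem \cite[Theorem 10.13]{rockafellar2009variational} in its limiting-subdifferential form. This requires checking lower semicontinuity of $J_{\rm lft}$ and level-boundedness of its sublevel sets, and it yields membership only for \emph{some} partial minimizer, hence the union. The paper then uses the subdifferential regularity of $J$ from \cref{fact:lower_C2} to identify the limiting and Fr\'echet subdifferentials of $J$.

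You use only the two facts $J_{\rm lft}(K',\gamma',P')\ge J(K')$ and $J_{\rm lft}(K,\gamma_K,P_K)=J(K)$, and you verify the Fr\'echet inequality directly. Your sign split on $N(K')$ correctly handles the mismatch $\|z'-z\|\ge\|K'-K\|_F$. This buys you three things:
\begin{itemize}
\item no lower-semicontinuity or compactness hypotheses;
\item the stronger intersection form over all partial minimizers (you also checked the argmin is nonempty, so this does imply the union form);
\item membership directly in the Fr\'echet subdifferential of $J_{\rm lft}$.
\end{itemize}

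The last point matters. The limiting version of the cited theorem places $(G,0,0)$ only in the \emph{limiting} subdifferential of $J_{\rm lft}$, while the paper's accompanying remark addresses the regularity of $J$ alone. So the paper's argument tacitly also needs regularity of $J_{\rm lft}$. That does hold, since $J_{\rm lft}=J_{\rm cvx}\circ\Upsilon$ with $J_{\rm cvx}$ convex and $\Upsilon$ a diffeomorphism, but your argument avoids the issue entirely.

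The paper's heavier machinery would matter only for limiting subgradients of a nonregular value function, which is not needed here. The compactness of the lifted sublevel sets it establishes is, however, used anyway in the proof of \cref{theorem:weak-PL}.
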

The proof uses the subdifferential formula for parametric minimization
\cite[Theorem 10.13]{rockafellar2009variational}. We provide further details in \cref{appendix:partial-minimization}. 

With \cref{lemma:weak-PL-lft,lemma:partial-minimization}, we are now ready to establish \cref{theorem:weak-PL}.

\vspace{1mm}
\noindent\textbf{Proof of \cref{theorem:weak-PL}.}
For any non-optimal gain $K\in\mathcal{K}_\nu$,
choose $G_K\in\arg\min_{G\in\partial J(K)}\|G\|_F.$
Then, by \cref{lemma:partial-minimization},
there exists
$(\gamma_K,P_K)\in
\arg\min_{\gamma,P}J_{\rm lft}(K,\gamma,P)$
such that $(G_K,0,0)\in
\partial J_{\rm lft}(K,\gamma_K,P_K)$. 
Further, $K\in\mathcal{K}_\nu$ implies
$z_{K}=(K,\gamma_K,P_K)\in \mathcal{L}_{\mathrm{lft},\nu}$.
Consequently,
for $K\in\mathcal{K}$ satisfying $J(K)>J^\star$,
it follows 
from \cref{lemma:lifted_set-compactness,lemma:weak-PL-lft} that
\begin{align*}
    J(K)-J^\star
= 
J_\mathrm{lft}(K,\gamma_K,P_K)-
J^\star
\overset{\cref{eq:weak-PL-lft-lemma}}{\leq}&
\beta_{z_K}
\times \mathrm{dist}(0,\partial J_\mathrm{lft}(K,\gamma_K,P_K)) \\
\overset{\cref{eq:partial-min-subgradient}}{\leq}
&
\beta_{z_K}\times
\mathrm{dist}(0,\partial J(K)) \\
\leq \;&
\left(\max_{z\in     \mathcal{L}_{\mathrm{lft},\nu}}\beta_{z}\right)
    \times
\mathrm{dist}(0,\partial J(K)) ,
\end{align*}
where 
$\|G_K\|_F=\mathrm{dist}(0,\partial J(K))$. 
By the continuity of $\beta_{z}$ with respect to $z$,
the compactness of $\mathcal{L}_{\mathrm{lft},\nu}$ yields
$0<\max_{z\in 
    \mathcal{L}_{\mathrm{lft},\nu}}\beta_{z}<\infty$.
Hence, by taking
\begin{align}\label{eq:mu_K}
    \mu=
    \left(\max_{z\in 
    \mathcal{L}_{\mathrm{lft},\nu}}\beta_{z}\right)^{-1}>0,
\end{align}
we arrive at
the weak PL inequality in \cref{eq:weakPL}.
When $K$ is optimal, \cref{eq:weakPL} is obvious.
\hfill$\square$
\section{A first-order proximal bundle method}\label{section:algorithm}

Building on the analysis in \cref{section:analysis-1,section:analysis-2}, we now introduce a new first-order algorithm to solve the $\mathcal{H}_\infty$ policy optimization problem \cref{eq:policy-optimization-main}. Since \cref{eq:policy-optimization-main} is in general nonconvex and nonsmooth, our goal is to find an approximated stationary point, given in \Cref{def:inexact-stationary}. In \cref{subsec:PPM}, we introduce a conceptual proximal point method. In \cref{subsec:PBM}, we develop the proximal bundle method that is guided by the proximal point method.

Throughout this section, we require an initial stabilizing gain $K_0\in\mathcal{K}$. Denote the initial sublevel set $\mathcal{K}_0 :=\{K\in\mathcal{K}\mid J(K)\leq J(K_0)\}$.
We make the following assumption
to specify the weak convexity and Lipschitz constants
based on the initial gain $K_0$.
\begin{assumption}\label{assumption:sublevel-set_weak-convexity}
Let $K_0$ be stabilizing
and satisfy $J(K_0)>J^\star$.
Define the enlarged sublevel set\footnote{
We use the threshold $2J(K_0)$ for concreteness. Our discussion applies
with any constant greater than $J(K_0)$.}
\begin{equation}\label{eq:sublevel_sets}
    \widetilde{\mathcal{K}}_0 :=\{K\in\mathcal{K}\mid J(K)\leq 2J(K_0)\}.
\end{equation}
Let $m>0$ be a constant such that
$J$ is $m$-weakly convex on any convex subset of $\widetilde{\mathcal{K}}_0$, and let $\ell>0$ be a constant such that
$J$ is $\ell$-Lipschitz on $\widetilde{\mathcal{K}}_0$.
\end{assumption}
The existence of the constants $m$ and $\ell$ in \cref{assumption:sublevel-set_weak-convexity} is guaranteed by the compactness of $\widetilde{\mathcal{K}}_0$ (see \Cref{lemma:basic-properties} and \Cref{theorem:local_wc}). We also define another useful constant 
\begin{equation} \label{eq:minimum-distance}
    \underline{d}:=
\min_{K\in\mathcal{K}_0}\mathrm{dist}(K,\partial\widetilde{\mathcal{K}}_0). 
\end{equation} 
In other words, $\underline d$ is the minimum distance from the initial sublevel set $\mathcal K_0$ to the boundary of the enlarged sublevel set $\widetilde{\mathcal K}_0$. This quantity is strictly positive because $\mathcal K_0$ and $\widetilde{\mathcal K}_0$ are compact (cf. \cref{lemma:basic-properties}), and
$\mathcal K_0$ is strictly contained in the interior of $\widetilde{\mathcal K}_0$.

\subsection{Proximal point method (PPM)}
\label{subsec:PPM}

To solve \cref{eq:policy-optimization-main},
we here present a conceptual proximal point method (PPM).
Starting with the initial gain $\mathcal{K}_0 \in \mathcal{K}$, 
the PPM generates iterates by solving a regularized subproblem 
\begin{equation} \label{eq:PPM-update}
    K_{t+1} = \argmin_{K \in \mathcal{K}} J(K) + \frac{\rho}{2}\|K - K_t\|^2_F, \quad t = 0,1,2, \ldots,
\end{equation}
where $\rho>0$ is a proximal parameter. 
We will next show that, for sufficiently large $\rho$, the subproblem in \cref{eq:PPM-update} has a unique minimizer and produces iterates that remain in the initial
sublevel set $\mathcal K_0$. In particular, it suffices to choose 
\begin{equation} \label{eq:lower-bound-on-rho}
    \rho > \max \left\{m,\frac{2(J(K_0)-J^\star)}{\underline{d}^2}\right\}. 
\end{equation}
\begin{lemma} \label{lemma:PPM-iterates}
    Suppose \cref{assumption:stablizability,assumption:K_nonempty,assumption:sublevel-set_weak-convexity} hold.
Let the proximal parameter $\rho$ satisfy \cref{eq:lower-bound-on-rho}. Then each PPM subproblem in
\cref{eq:PPM-update} has a unique minimizer. Moreover, the iterates satisfy
$
    K_t\in\mathcal \mathcal{K}_0,\, \forall t\ge 0.
$ 
\end{lemma}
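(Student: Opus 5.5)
The plan is an induction on $t$: assuming $K_t\in\mathcal K_0$, I will show that the subproblem \cref{eq:PPM-update} has a unique minimizer $K_{t+1}$ and that $K_{t+1}\in\mathcal K_0$.

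\textbf{Step 1: localize the subproblem.} Let $\phi_t(K):=J(K)+\frac{\rho}{2}\|K-K_t\|_F^2$ on $\mathcal K$. I claim every candidate worth considering lies in the ball $\mathbb B_{\underline d}(K_t)$. Take any $K\in\mathcal K$ with $\|K-K_t\|_F\ge \underline d$. Since $J\ge J^\star$,
\[
\phi_t(K)\ge J^\star+\tfrac{\rho}{2}\underline d^2> J^\star+(J(K_0)-J^\star)=J(K_0)\ge J(K_t)=\phi_t(K_t),
\]
where the strict inequality uses \cref{eq:lower-bound-on-rho}. So such $K$ cannot be minimizers or even near-minimizers. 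It suffices to study $\phi_t$ on the sublevel set $S_t:=\{K\in\mathcal K\mid \phi_t(K)\le\phi_t(K_t)\}$. This set is contained in the open ball $U:=\{K\mid \|K-K_t\|_F<\underline d\}$. It is also contained in $\mathcal K_0$, because $J(K)\le \phi_t(K)\le J(K_t)\le J(K_0)$.

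\textbf{Step 2: existence.} By the definition \cref{eq:minimum-distance} of $\underline d$ and $K_t\in\mathcal K_0$, the open ball $U$ around $K_t$ does not meet $\partial\widetilde{\mathcal K}_0$. A ball is connected and contains $K_t$, an interior point of $\widetilde{\mathcal K}_0$, so $U\subseteq \operatorname{int}\widetilde{\mathcal K}_0\subseteq\mathcal K$. Hence $U$ is a convex subset of $\widetilde{\mathcal K}_0$. The set $S_t$ is closed: it is a closed subset of the compact set $\mathcal K_0$, since $\phi_t$ is continuous on $\mathcal K$ and $\mathcal K_0$ is compact by coercivity in \cref{lemma:basic-properties}. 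It is also nonempty because it contains $K_t$. So $S_t$ is compact, $\phi_t$ attains its minimum over it, and by Step 1 this is a global minimizer over $\mathcal K$.

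\textbf{Step 3: uniqueness.} By \cref{assumption:sublevel-set_weak-convexity}, $J$ is $m$-weakly convex on the convex set $U$. Then $\phi_t=\big(J+\frac m2\|\cdot\|_F^2\big)+\big(\frac{\rho}{2}\|\cdot-K_t\|_F^2-\frac m2\|\cdot\|_F^2\big)$ is $(\rho-m)$-strongly convex on $U$, and $\rho>m$. Any two global minimizers lie in $S_t\subseteq U$. Strong convexity on the convex set $U$ forces them to coincide, since the midpoint would otherwise give a strictly smaller value.

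\textbf{Step 4: feasibility.} By Step 1, $K_{t+1}\in S_t\subseteq\mathcal K_0$, which closes the induction. The base case $K_0\in\mathcal K_0$ is trivial.

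The main obstacle is the topological point in Step 2: showing that the whole ball $U$ lies in $\widetilde{\mathcal K}_0$, so that the weak convexity constant $m$ applies there. I will argue this via connectedness of the ball together with the fact that $\partial\widetilde{\mathcal K}_0$ is at distance at least $\underline d$ from $K_t$. Here I use that $\widetilde{\mathcal K}_0$ is closed in $\mathbb R^{n_u\times n_y}$: it is compact, and its topological boundary separates interior from exterior.
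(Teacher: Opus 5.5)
Your proof is correct and follows essentially the same route as the paper: the inequality $\phi_t(K)\le\phi_t(K_t)$ together with \cref{eq:lower-bound-on-rho} confines every minimizer to the ball of radius $\underline d$ around $K_t$. On that ball, $(\rho-m)$-strong convexity gives uniqueness, and $J(K_{t+1})\le J(K_t)\le J(K_0)$ gives $K_{t+1}\in\mathcal K_0$. Your connectedness argument for $\mathbb{B}_{\underline d}(K_t)\subseteq\widetilde{\mathcal K}_0$ and your compact-sublevel-set argument for existence make explicit two steps that the paper only asserts; the paper cites coercivity for existence and states the ball inclusion without proof.
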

\begin{proof}
    We prove the claim by induction. Suppose $K_t\in\mathcal K_0$. From \cref{lemma:basic-properties}, the function $K \mapsto J(K) + \frac{\rho}{2}\|K - K_t\|^2_F$ is continuous and coercive on $\mathcal{K}$. Thus, there must exist at least one minimizer within $\mathcal{K}$ for the PPM subproblem in \cref{eq:PPM-update}. Let $K_{t+1}$ be any minimizer. Since $K_t$ is feasible for the proximal
subproblem, we have 
    \begin{equation} \label{eq:one-step-improvement}
    J(K_{t+1}) + \frac{\rho}{2}\|K_{t+1} - K_t\|_F^2 \leq J(K_t). 
    \end{equation}
    Now it is clear that $J(K_{t+1}) \leq J(K_t) \leq J(K_0)$, and thus $K_{t+1} \in \mathcal{K}_0$. 

    We next show that $K_{t+1}$ must be unique if we have \cref{eq:lower-bound-on-rho}. From \cref{eq:one-step-improvement}, we know
    $$
    \|K_{t+1} - K_t\|_F \leq \sqrt{\frac{2(J(K_t) - J(K_{t+1}))}{\rho}} \leq \sqrt{\frac{2(J(K_0) - J^\star)}{\rho}} < \underline{d}.
    $$
    Thus every minimizer lies in the convex ball $B_{\underline d}(K_t)\subset
\widetilde{\mathcal K}_0$. On this ball, $J$ is $m$-weakly convex, and hence
$
    K\mapsto J(K)+\frac{\rho}{2}\|K-K_t\|_F^2
$ 
is $(\rho-m)$-strongly convex.
Therefore, the proximal subproblem has a unique minimizer.

The induction starts from $K_0\in\mathcal{K}_0$, and the argument above shows
that $K_{t+1}\in\mathcal K_0$ whenever $K_t\in\mathcal K_0$. Hence all PPM
iterates are well-defined, unique, and remain in $\mathcal K_0$.
\end{proof}

\Cref{lemma:PPM-iterates} confirms that the PPM in \Cref{alg:PPM} is well-defined. With a simple argument, we can derive the following non-asymptotic convergence rate for finding a stationary point.

\begin{algorithm}[t]
\caption{Proximal point method
}\label{alg:PPM}
\begin{algorithmic}
\Require $K_0 \in \mathcal{K}$ and
choose $\rho > 0$ as in \cref{eq:lower-bound-on-rho}.
\For{$t=0,1,2, \ldots$}
    \State Update $K_{t+1}$ by solving \Cref{eq:PPM-update}; 
\EndFor
\end{algorithmic}
\end{algorithm}

\begin{proposition}[Convergence of the PPM]\label{theorem:PPM}
Suppose \cref{assumption:stablizability,assumption:K_nonempty,assumption:sublevel-set_weak-convexity} hold.
Fix a target accuracy $\epsilon>0$. Let the proximal parameter $\rho$ satisfy \cref{eq:lower-bound-on-rho}. Then
the PPM in \Cref{alg:PPM} achieves
    \begin{equation} \label{eq:stationary-measure}
    \min_{t\in\{1,\ldots,T\}}
    \mathrm{dist}(0,\partial J(K_{t})) \leq \epsilon
    \end{equation}
    after at most   $T$  iterations with
    \begin{equation}\label{eq:PPM_rate}
        T
        =
         \left\lceil
        \frac{2\rho(J(K_0)-J^\star)}{\epsilon^2}
         \right\rceil.
    \end{equation}
\end{proposition}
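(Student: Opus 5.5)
The plan is the standard proximal-point argument: combine the first-order optimality condition of the subproblem \cref{eq:PPM-update} with the sufficient decrease \cref{eq:one-step-improvement}, then telescope.

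\textbf{Step 1: a subgradient from the optimality condition.} By \cref{lemma:PPM-iterates}, $K_{t+1}$ is the unique minimizer and lies in $\mathcal K_0$. It is in particular a local minimizer of $K\mapsto J(K)+\frac{\rho}{2}\|K-K_t\|_F^2$ on the open set $\mathcal K$. By Fermat's rule \cite[Theorem 10.1]{rockafellar2009variational} and the sum rule for Fréchet subdifferentials with a smooth summand \cite[Exercise 8.8]{rockafellar2009variational}, we get
\[
0\in\partial J(K_{t+1})+\rho(K_{t+1}-K_t).
\]
Hence $\rho(K_t-K_{t+1})\in\partial J(K_{t+1})$, which gives
\[
\mathrm{dist}(0,\partial J(K_{t+1}))\le \rho\|K_{t+1}-K_t\|_F .
\]
No weak convexity is needed here; the rule holds at any local minimizer in the open domain.

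\textbf{Step 2: sufficient decrease.} From \cref{eq:one-step-improvement} we have
\[
\|K_{t+1}-K_t\|_F^2\le \frac{2}{\rho}\bigl(J(K_t)-J(K_{t+1})\bigr).
\]
Combining this with Step 1 yields
\[
\mathrm{dist}(0,\partial J(K_{t+1}))^2\le 2\rho\bigl(J(K_t)-J(K_{t+1})\bigr).
\]

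\textbf{Step 3: telescoping.} Sum the bound from Step 2 over $t=0,\dots,T-1$. Since $J(K_T)\ge J^\star$, this gives
\[
T\min_{1\le t\le T}\mathrm{dist}(0,\partial J(K_t))^2\le \sum_{t=1}^T \mathrm{dist}(0,\partial J(K_t))^2\le 2\rho\bigl(J(K_0)-J^\star\bigr).
\]
For $T$ as in \cref{eq:PPM_rate}, we have $T\ge 2\rho(J(K_0)-J^\star)/\epsilon^2$. Therefore the minimum squared distance is at most $\epsilon^2$, which is \cref{eq:stationary-measure}.

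\textbf{Main obstacle.} The only delicate point is justifying the optimality condition on the nonconvex open domain $\mathcal K$. This is handled by the fact that $K_{t+1}$ is an interior point of $\mathcal K$, so Fermat's rule for Fréchet subdifferentials applies locally. Uniqueness and membership in $\mathcal K_0$ are already supplied by \cref{lemma:PPM-iterates}.
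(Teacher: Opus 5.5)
Your proof is correct and follows the paper's argument: the optimality condition $\rho(K_t-K_{t+1})\in\partial J(K_{t+1})$ is combined with the sufficient decrease \cref{eq:one-step-improvement}, and the bounds are then telescoped. The only difference is how you justify the optimality condition. You use Fr\'echet's Fermat rule at an interior local minimizer together with the smooth sum rule, whereas the paper appeals to $(\rho-m)$-strong convexity of the subproblem on $B_{\underline d}(K_t)$; you are right that weak convexity is not needed for this step.
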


\begin{proof}
A direct telescope sum of \Cref{eq:one-step-improvement} leads to 
\begin{equation}\label{eq:PPM_telescope_sum}
    \sum_{t=0}^{T-1}
    \| K_{t+1} - K_t \|_F^2
    \leq \frac{2}{\rho}
    \left(
    J(K_0) - J(K_{T})
    \right)
    \leq 
    \frac{2}{\rho}
    \left(
    J(K_0) - J^\star
    \right). 
\end{equation}
As argued in the proof of \Cref{lemma:PPM-iterates}, each PPM subproblem in \Cref{eq:PPM-update} admits a unique minimizer within the ball $B_{\underline{d}}(K_t)\subset
\widetilde{\mathcal K}_0$, and the function $
    K\mapsto J(K)+\frac{\rho}{2}\|K-K_t\|_F^2
$ 
is $(\rho-m)$-strongly convex over this ball. So the optimality condition of \Cref{eq:PPM-update} gives 
\begin{equation} \label{eq:subgradient-PPM-subproblem}
    0 \in \partial J(K_{t+1}) + \rho(K_{t+1} - K_t) \quad \Rightarrow \quad -\rho(K_{t+1} - K_t) \in \partial J(K_{t+1}), 
\end{equation}
implying that $\rho \|K_{t+1}-K_t\|_F 
    \geq \mathrm{dist}(0,\partial J(K_{t+1}))$. Combining this with \cref{eq:PPM_telescope_sum} leads to 
    $$
    \begin{aligned}
    T \times \min_{t=\{0,1, \ldots, T-1\}} \frac{1}{\rho^2}\mathrm{dist}(0,\partial J(K_{t+1}))^2 &\leq 
    \frac{2}{\rho}
    \left(
    J(K_0) - J^\star
    \right) \\  \quad \Rightarrow \quad \min_{t=\{0,1, \ldots, T-1\}} \mathrm{dist}(0,\partial J(K_{t+1}))^2 &\leq \frac{2\rho}{T}
    \left(
    J(K_0) - J^\star
    \right).  
    \end{aligned}
    $$
    We thus arrive at the desired bound \cref{eq:PPM_rate} to ensure \cref{eq:stationary-measure}. 
\end{proof}

\cref{theorem:PPM} ensures that
we can find a feedback gain $K_t$ satisfying $\mathrm{dist}(0,\partial J(K_t)) \leq \epsilon$ for the $\mathcal H_\infty$ policy optimization problem \cref{eq:policy-optimization-main} within at most $\mathcal O(1/\epsilon^2)$ iterations.  
We have used two different sublevel sets
$\mathcal{K}_0$ and 
$\widetilde{\mathcal{K}}_0$ in the proof of \cref{theorem:PPM}.
This is to guarantee the existence of a convex subset of
$\widetilde{\mathcal{K}}_0$ containing both
$K_t$ and $K_{t+1}$, which enables the use of weak convexity in \cref{theorem:local_wc}.

The PPM in \Cref{alg:PPM} is not directly implementable since the regularized subproblem \cref{eq:PPM-update} is difficult to solve exactly. 
Nevertheless, the PPM is conceptually useful, and it motivates~our~design of a \textit{proximal bundle method}, which solves \cref{eq:PPM-update} approximately through computationally simple~iterations.

\subsection{Proximal bundle method (PBM) as an inexact PPM}
\label{subsec:PBM}

We here develop 
a \textit{proximal bundle method} (PBM) to solve the $\mathcal{H}_\infty$ policy optimization \cref{eq:policy-optimization-main}, which  can be viewed as an inexact PPM.

\subsubsection{A proximal bundle method for $\mathcal{H}_\infty$ optimization}

We now introduce the PBM, an iterative algorithm with a subroutine that
approximates the update of the exact PPM (\Cref{alg:PPM}) and returns an iterate $K_{t+1}$ satisfying a similar, quantifiable descent property to that of the PPM. We call the subroutine \texttt{ProxDescent}, listed in
\Cref{alg:proxdescent}. Motivated by \cite{liao2025proximal},
\texttt{ProxDescent} is designed as a bundle scheme tailored to the
$\mathcal H_\infty$ policy optimization problem. The overall PBM for solving
\cref{eq:policy-optimization-main} is listed in \Cref{alg:PBM_outerloop}. It
can be viewed as a double-loop algorithm: the outer loop follows the structure
of the PPM, while the inner loop, implemented by \texttt{ProxDescent}, ensures
a relaxed version of the descent condition \cref{eq:one-step-improvement} at each outer iteration.

One key step in \texttt{ProxDescent} is to construct a suitable lower approximation function $J_k(\cdot)$ for the locally convex function
$J(\cdot)+\frac{m}{2}\|\cdot-K_t\|_F^2$, and then to compute
\begin{equation}\label{eq:L_k+1}
    L_{k+1}
    =
    \operatorname*{argmin}_{L \in \mathbb{R}^{n_u \times n_y}} 
    \;
    J_{k}(L) + 
    \frac{\rho}{2}\|L-K_t\|_F^2. 
\end{equation}
Here, $k$ is another iteration count for the inner loop
and $\rho>0$ is a sufficiently large positive number. Note that the subproblem \cref{eq:L_k+1} is unconstrained and has no explicit stabilizing constraint $L \in \mathcal{K}$. As we will show in \Cref{lemma:feasibility_nullstep} later, each minimizer naturally satisfies $L_{k+1} \in \mathcal{K}$ when $\rho>0$ is sufficiently large (which corresponds to a sufficiently small stepsize). 
By \Cref{assumption:sublevel-set_weak-convexity}, the function $J(\cdot)+\frac{m}{2}\|\cdot-K_t\|_F^2$ is convex over any convex subset of $\widetilde{\mathcal{K}}_0$. 
 As we detail in \cref{assumption:conditions_Jk} below, this convex property allows us to construct $J_k(\cdot)$ as a piecewise linear function, satisfying the lower-approximation property:
\begin{equation}\label{eq:Jk-lower-approximation}
     J_{k}(L) 
     \leq J(L) + 
     \frac{m}{2}\|L-K_t\|_F^2,
     \quad
     \forall L\in \mathcal{U}_{K_t},
\end{equation}
where $\mathcal{U}_{K_t}$ is the largest open ball around $K_t$ on $\operatorname{int}(\widetilde{\mathcal{K}}_0)$.
We remark that \cref{eq:L_k+1} can be efficiently updated for the piecewise linear function $J_{k}(\cdot)$.

{\begin{algorithm}[t]
\caption{\texttt{ProxDescent}$(K_t,\beta,\rho)$}\label{alg:proxdescent}
\begin{algorithmic}[1]
\Require $K_t \in {\mathcal{K}}_0,\,\beta \in (0,1)$, and
$\rho >0$.
\For{$k$=0,1,2\ldots}
\State Construct $J_{k}$ as 
in \cref{assumption:conditions_Jk}; 
     \State Compute $L_{k+1}$ via \cref{eq:L_k+1}; 
     \Comment{For $J_k$ in \cref{eq:two-cut-model}, use the analytic solution \cref{eq:L_analytic}.} 
\If{\cref{eq:approx} is satisfied}
\State \Return $L_{k+1}$. 
\EndIf
\EndFor
\end{algorithmic}
\end{algorithm}
\begin{algorithm}[t]
\caption{Proximal bundle method
for  \cref{eq:policy-optimization-main}
}\label{alg:PBM_outerloop}
\begin{algorithmic}[1]
\Require $K_0 \in \mathcal{K}_,\,\beta \in (0,1),$ and
$\rho >0$.
\For{$t=0,1,2, \ldots$}
    \State $K_{t+1} = \texttt{ProxDescent}(K_t,\beta,\rho) 
    $ with \Cref{alg:proxdescent}.
\EndFor
\end{algorithmic}
\end{algorithm}}

Then, 
to determine the next point $K_{t+1}$,
we evaluate the quality of
the candidate point $L_{k+1}$ 
through
the following criterion:
\begin{equation}\label{eq:approx}
\beta\left(
    J(K_t) - J_{k}(L_{k+1})\right) \leq  J(K_t)-\left(J(L_{k+1})
    + \frac{m}{2}\|L_{k+1}-K_t\|_F^2
    \right)
\end{equation}
with a constant 
$\beta\in(0,1)$. If \cref{eq:approx} holds, we set $K_{t+1} = L_{k+1}$; otherwise, we refine the local model $J_{k+1}$ via \cref{assumption:conditions_Jk} and repeat the procedure above. This inner search procedure is listed in \Cref{alg:proxdescent}. This stopping criterion \cref{eq:approx} follows from standard proximal bundle methods \cite{diaz2023optimal,liao2025proximal},
and guarantees a descent property (see \cref{proposition:PBM-descent} below).

As for the (local) lower approximation $J_k$, we consider the following three types of convex functions.
These choices are standard in proximal bundle methods 
\cite{diaz2023optimal,liao2025proximal}.
\begin{assumption}\label{assumption:conditions_Jk}
At $k=0$,
{
let 
$L_0=K_t$ and
$J_0(\cdot) = 
J(L_0)+\langle G_0,\cdot -L_0\rangle$ with $G_0\in \partial J(L_0)$.}
At $k$-th iteration for $k\geq 1$, the lower approximation $J_k(\cdot)$ is given by one of the following functions:
\begin{subequations}
\begin{enumerate}
    \item 
    Full-memory model:
    \begin{equation} \label{eq:full-memory}
    \begin{aligned}
    J_{k}(L)
    =\max_{i=0,\ldots k}
    J(L_i)+ \frac{m}{2}\|L_i-K_t\|^2_F
    + \langle 
    G_{i},L-L_{i}
    \rangle;
    \end{aligned}
    \end{equation}
    \item 
    Two-cut model: 
    \begin{equation} \label{eq:two-cut-model}
    J_{k}(L)
    =\max
    \left\{
    J(L_{k})
    + \frac{m}{2}\|L_{k}-K_t\|^2
    + \langle G_k,L-L_k\rangle,
    J_{k-1}(L_k)
    + \langle S_k,L-L_k \rangle
    \right\};
    \end{equation}
    {\item A more general form:
    \begin{equation} \label{eq:model-generel}
    J_{k}(L)
    \!=\!\max
    \left\{\mathcal{M}_k(L),
    J(L_{k})
    + \frac{m}{2}\|L_{k}-K_t\|^2
    + \langle G_k,L-L_k\rangle,
    J_{k-1}(L_k)
    \!+\! \langle S_k,L-L_k \rangle
    \right\},
    \end{equation}}
\end{enumerate}
\end{subequations}
where 
$G_i -m(L_i-K_t) \in \partial J(L_i)$ and $S_k = \rho(K_t-L_k) \in \partial J_{k-1}(L_k)$.
In the model 3), $\mathcal{M}_k(\cdot)$ is a possibly nonsmooth function that is 
$\ell_\mathcal{M}$-Lipschitz on $\widetilde{\mathcal{K}}_0$,
convex on $\mathbb{R}^{n_u\times n_y}$, and satisfies $J(L)+\frac{m}{2}\|L-K_t\|_F^2\geq\mathcal{M}_k(L)$ for all $L\in U_{K_t}$,
 where $U_{K_t}$ is the largest open ball around $K_t$ on $\widetilde{\mathcal{K}}_0.$
\end{assumption}

The first full-memory model uses all the past iterates to approximate $J(\cdot)+\frac{m}{2}\|\cdot-K\|_F^2$.
The second two-cut model consists of two affine functions, i.e., the first-order approximation of $J(\cdot)+\frac{m}{2}\|\cdot-K\|_F^2$ at $L_k$ and that of $J_{k-1}$ at $L_k$, which represents the aggregation of the previous iterates. 
The subproblem \cref{eq:L_k+1} using $J_k(\cdot)$ in \Cref{assumption:conditions_Jk} allows much more efficient computation than the original PPM in \Cref{alg:PPM}, since it is reduced to a quadratic program (QP).
Furthermore, when using the two-cut model \cref{eq:two-cut-model}, the subproblem \cref{eq:L_k+1} admits a closed-form analytical solution: 
        \begin{align}\label{eq:L_analytic}
        {L}_{k+1}\!=\!
        K_t
        - \frac{1}{\rho}\left(
        \theta_{k}G_k + (1-\theta_k)S_k
        \right), \quad
        \theta_{k} = \min
        \left\{
        1, \frac{\rho(J(L_k)
        +\frac{m}{2}\|L_k-K_t\|_F^2
        -J_{k-1}(L_k))}{\|G_k-S_k\|_F^2}
        \right\},
    \end{align}
where we set $\theta_k=1$ when $G_k=S_k$.
The derivation of this closed-form solution is simple; see e.g., \cite[Claim 1]{diaz2023optimal}, and \cite[Appendix B.6]{liao2025proximal}. 
The third model adds flexibility to the two-cut model by allowing another lower approximation $\mathcal{M}_k(\cdot)$.
In particular, by appropriately constructing $\mathcal{M}_k(\cdot)$,
one can incorporate the existing spectral bundle model by \cite{apkarian2006nonsmooth,apkarian2009proximity} into our framework; see \Cref{remark:SBM}.

\begin{lemma}[Implication of \cref{assumption:conditions_Jk}]
\label{corollary:lower-bound}
      Fix $K_t\in\mathcal{K}_0$. Let $\{L_k\}$ be the sequence generated by  \Cref{alg:proxdescent}. Assume 
      that
      $\|L_k-K_t\|_F<\underline{d} .$ For each $k \geq0$, the approximate function $J_k$ in \Cref{alg:proxdescent} satisfies \cref{eq:Jk-lower-approximation}, i.e., 
     $
         J_{k}(L) 
     \leq J(L) + 
     \frac{m}{2}\|L-K_t\|_F^2,\; \forall L\in \mathcal{U}_{K_t}.
     $
\end{lemma}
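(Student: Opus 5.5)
Write $F(L):=J(L)+\frac{m}{2}\|L-K_t\|_F^2$. The proof goes by induction on $k$, showing that each $J_k$ is a maximum of affine (or convex) pieces, and that each piece is a lower bound for $F$ on $\mathcal U_{K_t}$.

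\textbf{Convexity of $F$ on the ball.} Since $K_t\in\mathcal K_0$ and $\underline d=\min_{K\in\mathcal K_0}\mathrm{dist}(K,\partial\widetilde{\mathcal K}_0)$, the open ball $B_{\underline d}(K_t)$ lies in $\operatorname{int}(\widetilde{\mathcal K}_0)$. Hence $\mathcal U_{K_t}\supseteq B_{\underline d}(K_t)$, and $\mathcal U_{K_t}$ is a convex subset of $\widetilde{\mathcal K}_0$. By \cref{assumption:sublevel-set_weak-convexity}, $J$ is $m$-weakly convex there. So $F$ equals $J+\frac m2\|\cdot\|_F^2$ plus an affine function, and is therefore convex on $\mathcal U_{K_t}$.

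\textbf{Each linearization is a lower bound.} Take a point $L_i$ with $\|L_i-K_t\|_F<\underline d$, so $L_i\in\mathcal U_{K_t}$. Since $F$ is $J$ plus a smooth term, $\partial F(L_i)=\partial J(L_i)+m(L_i-K_t)$. The condition $G_i-m(L_i-K_t)\in\partial J(L_i)$ therefore says exactly that $G_i\in\partial F(L_i)$. For the convex function $F$ on the open convex set $\mathcal U_{K_t}$, the Fréchet subgradient is a global subgradient on that set, by the convex characterization recalled in \cref{subsection:prelim-nonsmooth-nonconvex-opt} (equivalently \cref{prop:subgrad-weakly} applied with $m=0$ to $F$). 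This gives
\[
F(L_i)+\langle G_i,L-L_i\rangle\le F(L)\qquad\forall L\in\mathcal U_{K_t}.
\]
For $k=0$ we have $L_0=K_t$, so $F(L_0)=J(K_t)$ and $G_0\in\partial J(K_t)=\partial F(K_t)$; the claim holds. The full-memory model \cref{eq:full-memory} is a maximum of such cuts, so it follows immediately.

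\textbf{The aggregate cut (two-cut and general models).} Assume inductively that $J_{k-1}\le F$ on $\mathcal U_{K_t}$. The function $J_{k-1}$ is convex on all of $\mathbb R^{n_u\times n_y}$, as a maximum of affine functions and the convex $\mathcal M_{k-1}$. The iterate $L_k$ minimizes $J_{k-1}+\frac\rho2\|\cdot-K_t\|_F^2$, so the optimality condition gives $S_k=\rho(K_t-L_k)\in\partial J_{k-1}(L_k)$. Convexity then yields
\[
J_{k-1}(L_k)+\langle S_k,L-L_k\rangle\le J_{k-1}(L)\le F(L)\qquad\forall L\in\mathcal U_{K_t}.
\]
The cut at $L_k$ is handled by the previous step, and $\mathcal M_k\le F$ holds by assumption. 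Taking the maximum of the pieces gives \cref{eq:Jk-lower-approximation} for \cref{eq:two-cut-model} and \cref{eq:model-generel}.

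\textbf{Main obstacle.} The delicate point is making sure every evaluation point $L_i$ lies in the region where $F$ is convex. Otherwise the subgradient inequality would only be local, or $J(L_i)$ might not even be defined. This is exactly why the hypothesis $\|L_k-K_t\|_F<\underline d$ is needed: it places $L_i$ inside $B_{\underline d}(K_t)\subseteq\mathcal U_{K_t}$. The hypothesis should be read as holding for all iterates up to index $k$.

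A second subtlety is that the aggregate cut relies on global convexity of $J_{k-1}$ over the whole space, not merely on $\mathcal U_{K_t}$. This is what lets the unconstrained subproblem \cref{eq:L_k+1} produce a valid subgradient $S_k$.
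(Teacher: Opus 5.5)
Your proof is correct and follows essentially the same route as the paper. Like the paper, you first show that each linearization cut at $L_i\in B_{\underline d}(K_t)\subseteq\mathcal U_{K_t}$ minorizes $J+\frac m2\|\cdot-K_t\|_F^2$ by weak convexity. You then treat the aggregate cut inductively, using $S_k=\rho(K_t-L_k)\in\partial J_{k-1}(L_k)$ together with convexity of $J_{k-1}$, and take $\mathcal M_k$ as a minorant by assumption. Your version is slightly more explicit than the paper's: you justify the ball inclusion and the subdifferential sum rule, and you derive $S_k$ from the optimality condition of \cref{eq:L_k+1}. The argument itself is the same.
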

\begin{proof}
     The assumption that $\{L_k\}\subset \mathbb{B}_{ \underline{d} }(K_t) $ guarantees $\{L_k\} \subset \widetilde{\mathcal{K}}_0$. From \cref{assumption:sublevel-set_weak-convexity}, the function $J$ is $m$-weakly convex on $\mathcal{U}_{K_t} $. For all $i \geq 0$, it holds that 
\begin{align}
    \label{eq:lower}
    J(L_i)+ \frac{m}{2}\|L_i-K_t\|^2_F
    + \langle 
    G_{i},L-L_{i}
    \rangle \leq  J(L)+ \frac{m}{2}\|L-K_t\|^2_F, \quad \forall L \in  \mathcal{U}_{K_t}, 
\end{align}
where $G_i - m(L_i - K_t)\in \partial J(L_i)$.
We can show \cref{eq:Jk-lower-approximation} for each model as follows.
\begin{itemize}
    \item Full-memory model \cref{eq:full-memory}: From \cref{eq:lower}, it is clear that the full-memory model \cref{eq:full-memory} satisfies \cref{eq:Jk-lower-approximation}.
    \item Two cut model \cref{eq:two-cut-model}: \cref{assumption:conditions_Jk} assumes that 
$
    J_0(\cdot) = 
J(K_t)+\langle G_0,\cdot -K_t\rangle = J(K_t) + \frac{m}{2}\|K_t - K_t\|^2+\langle G_0,\cdot -K_t\rangle
$ 
with $G_0 = G_0  - m(K_t - K_t)\in \partial J(K_t)$. Thus, $J_0$ satisfies $J_{0}(K) 
     \leq J(L) + 
     \frac{m}{2}\|L-K_t\|_F^2, \forall L\in \mathcal{U}_{K_t}$.
     Next, since $S_1 = \rho (K_t-L_1) \in \partial J_0(L_1)$ and $J_0$ is a convex function, we have 
\begin{align*}
    J_{0}(L_1)
    + \langle S_1,L-L_1 \rangle \leq J_0 (L ) \leq J(L) + 
     \frac{m}{2}\|L-K_t\|_F^2, \quad \forall L \in  \mathcal{U}_{K_t}. 
\end{align*}
Thus, $J_1(L) \leq J(L) + 
     \frac{m}{2}\|L-K_t\|_F^2$. Inductively, we have $J_k(L) \leq J(L) + 
     \frac{m}{2}\|L-K_t\|_F^2,$ for all $k.$
    \item A more general model \cref{eq:model-generel}: The argument follows the same as that for the two-cut model since $J(L)+\frac{m}{2}\|L-K_t\|_F^2\geq\mathcal{M}_k(L)$ for all $L\in \mathcal{U}_{K_t}$.
\end{itemize}
Hence, the three models in \cref{assumption:conditions_Jk} satisfy the lower approximation property \cref{eq:Jk-lower-approximation}.
\end{proof}

If the acceptance test \cref{eq:approx} is satisfied, we can guarantee a decrease in function value. 
This also confirms $L_{k+1}\in\mathcal{K}_0$ when \cref{eq:approx} holds.
Note that applying to this inequality the same proof strategy as \cref{theorem:PPM} does not provide a complexity result by itself, since $\rho(K_t-L_{k+1})\in \partial J_k(L_{k+1}) \neq \partial J(L_{k+1})$ in general.
\begin{proposition}\label{proposition:PBM-descent}
     Suppose \Cref{assumption:stablizability,assumption:K_nonempty,assumption:sublevel-set_weak-convexity,assumption:conditions_Jk} hold.
     Fix $K_t\in\mathcal{K}_0$. Let $\{L_k\}$ be the sequence generated by  \Cref{alg:proxdescent}. Assume $\{L_k\}\subset
     \mathbb{B}_{ \underline{d} }(K_t)
     $.
     Then, if the acceptance test \cref{eq:approx} holds, we have\begin{equation}\label{eq:descent:PBM}
         J(L_{k+1})\leq J(K_t)
         - \frac{m}{2}\|L_{k+1}-K_t\|_F^2.
     \end{equation}
\end{proposition}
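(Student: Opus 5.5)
The plan is to combine the acceptance test \cref{eq:approx} with a lower bound on the model decrease $J(K_t)-J_k(L_{k+1})$. The key point is that this quantity is nonnegative, and in fact at least $\frac{\rho}{2}\|L_{k+1}-K_t\|_F^2$. Once that holds, the left-hand side of \cref{eq:approx} is nonnegative. Rearranging then gives \cref{eq:descent:PBM}, and even the slightly stronger bound $J(L_{k+1})\le J(K_t)-\frac{m+\beta\rho}{2}\|L_{k+1}-K_t\|_F^2$.

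First, I would check that every quantity is well defined. By hypothesis $L_{k+1}\in\mathbb{B}_{\underline d}(K_t)$. Since $K_t\in\mathcal K_0$, the definition \cref{eq:minimum-distance} of $\underline d$ gives $\mathbb{B}_{\underline d}(K_t)\subset\widetilde{\mathcal K}_0\subset\mathcal K$. Hence $J(L_{k+1})$ is finite and $L_{k+1}$ is stabilizing.

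Second, I would use the optimality of $L_{k+1}$ in \cref{eq:L_k+1}. Comparing its objective value with the feasible point $L=K_t$ gives
\[
J_k(L_{k+1})+\tfrac{\rho}{2}\|L_{k+1}-K_t\|_F^2\le J_k(K_t).
\]
By \cref{corollary:lower-bound}, whose hypothesis $\|L_i-K_t\|_F<\underline d$ is exactly our assumption, the lower-approximation property \cref{eq:Jk-lower-approximation} holds on $\mathcal U_{K_t}$. This ball contains its own center $K_t$, so evaluating there gives $J_k(K_t)\le J(K_t)+\frac m2\|K_t-K_t\|_F^2=J(K_t)$. Combining the two inequalities yields $J(K_t)-J_k(L_{k+1})\ge\frac{\rho}{2}\|L_{k+1}-K_t\|_F^2\ge 0$.

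Third, since $\beta\in(0,1)$, substituting this bound into \cref{eq:approx} gives
\[
0\le\tfrac{\beta\rho}{2}\|L_{k+1}-K_t\|_F^2\le J(K_t)-J(L_{k+1})-\tfrac m2\|L_{k+1}-K_t\|_F^2.
\]
This is \cref{eq:descent:PBM}, and it also shows $J(L_{k+1})\le J(K_t)\le J(K_0)$, i.e., $L_{k+1}\in\mathcal K_0$.

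The only delicate step is justifying $J_k(K_t)\le J(K_t)$. This needs the lower-approximation property at $K_t$, which is guaranteed by \cref{corollary:lower-bound} under the standing ball assumption. Everything else is elementary rearrangement.
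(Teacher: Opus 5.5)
Your proof is correct and follows essentially the same route as the paper: compare the proximal objective in \cref{eq:L_k+1} at $L_{k+1}$ with its value at $K_t$, use the lower-approximation property from \cref{corollary:lower-bound} at the center $K_t$ to get $J(K_t)\ge J_k(L_{k+1})$, and then rearrange the acceptance test \cref{eq:approx}. The only difference is that you keep the term $\frac{\rho}{2}\|L_{k+1}-K_t\|_F^2$, which the paper drops, so you obtain the slightly sharper bound $J(L_{k+1})\le J(K_t)-\frac{m+\beta\rho}{2}\|L_{k+1}-K_t\|_F^2$.
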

\begin{proof}
To show \cref{eq:descent:PBM}, we show that $J(K_t)\geq J_k(L_{k+1})$. Then dropping the nonnegative term $\beta(J(K_t) - J_k(L_{k+1}))$ with a simple rearrangement in \cref{eq:approx} yields the result.

 From \cref{corollary:lower-bound}, we have that 
$J(K_t)\geq J_k(K_t)$.
On the other hand, we have 
\begin{equation*}
    J_k(K_t) =
    J_k(K_t)+\frac{\rho}{2}\|K_t-K_t\|_F^2
    \geq J_k(L_{k+1}) +
    \frac{\rho}{2}\|L_{k+1}-K_t\|_F^2
    \geq J_k(L_{k+1})
\end{equation*}
where the first inequality is by \cref{eq:L_k+1}.
Combining the above two inequalities gets $J(K_t)\geq J_k(L_{k+1})$.
\end{proof}

In this proposition, we assume $\{L_k\}\subset
     \mathbb{B}_{ \underline{d} }(K_t)
     $ to properly define the model $J_k$ and its lower-approximation property.
We will formally ensure this assumption in \cref{lemma:feasibility_nullstep} below.

\begin{remark}[A spectral bundle model]\label{remark:SBM}
In designing a bundle model satisfying \cref{assumption:conditions_Jk},
one promising choice of function $\mathcal{M}_k(\cdot)$ in the third model is the following \textit{spectral model}:
\begin{equation*}
    \mathcal{M}_k(L)=
    \max_{\theta=(\omega,u,v)\in\Theta_k}
    \{
    \varphi\left(\theta, K_t\right)+\left\langle\nabla_K \varphi\left(\theta, K_t\right), L-K_t\right\rangle\},
\end{equation*}
where 
$\Theta_k$ is a (possibly finite) compact subset of the set $\Theta$, and
$\varphi(\cdot,\cdot)$ is defined in \cref{eq:local-smooth-hinf}.
For any sufficiently large $m>0$,
the lower approximation property \cref{eq:Jk-lower-approximation} holds \cite[Lemma 2.1]{apkarian2009proximity}.
In this model, we take the first approximation for the smooth function $\varphi(\theta,K)$ with respect to $K$, not for the cost $J$ itself.
The continuous-time version of this model was first proposed in \cite{apkarian2006nonsmooth,apkarian2009proximity}.
In contrast with the full-memory and two-cut models in \cref{assumption:conditions_Jk}, this model can capture the eigenvalue and frequency structure of the cost, which may particularly enhance the convergence performance near stationary points. 
This model also admits efficient implementation when $\Theta_k$ is a finite set.
In particular, the problem 
\cref{eq:L_k+1} is reduced to a simple QP, and its dual problem can be solved very efficiently \cite[Section 4]{apkarian2009proximity}.
\hfill$\square$
\end{remark}

\subsection{Non-asymptotic convergence guarantees}\label{section:proof-main_theorem}

The proximal bundle method in \Cref{alg:PBM_outerloop} is implementable, since each iteration involves only simple computations. We now present our main convergence result for this algorithm.
{Here, 
we define the $(\eta,\epsilon)$-stationarity (\Cref{def:inexact-stationary}) 
for the open domain $V=\operatorname{int}(\widetilde{\mathcal{K}}_0)$,
and consider
the restriction $J|_{\operatorname{int}(\widetilde{\mathcal{K}}_0)}:\operatorname{int}(\widetilde{\mathcal{K}}_0)\to\mathbb{R}$ of $J:\mathcal{K}\to\mathbb{R}$.
This restriction is sufficient for our convergence analysis, since we will show that all inner and outer iterates remain in the interior of $\widetilde{\mathcal{K}}_0$ (\cref{lemma:feasibility_nullstep}).
For brevity,
$J|_{\operatorname{int}(\widetilde{\mathcal{K}}_0)}(L)$ 
for $L\in\operatorname{int}(\widetilde{\mathcal{K}}_0)$
is simply denoted by $J(L)$.}

We first show that, when $\rho$ is sufficiently large, all inner-loop iterates generated by \Cref{alg:proxdescent} remain stabilizing. This is essential for ensuring feasibility throughout the inner loop. Indeed, by the construction of the model $J_k(\cdot)$ in \cref{assumption:conditions_Jk}, we can control the distance between $L_{k+1}$ in \cref{eq:L_k+1} and the proximal center $K_t$. This allows us to guarantee that $L_{k+1}$ remains in the interior of the enlarged feasible sublevel set $\widetilde{\mathcal K}_0$. We have the following result. 

\begin{lemma}[Feasibility of inner iterates]\label{lemma:feasibility_nullstep}
 Suppose \Cref{assumption:stablizability,assumption:K_nonempty,assumption:sublevel-set_weak-convexity,assumption:conditions_Jk} hold.
Let $K_t\in \mathcal{K}_0$
and $\rho >\max\{\frac{\ell}{\underline{d}}+m,\frac{\ell_\mathcal{M}}{\underline{d}}\}$, where $m,\ell$ are given in \cref{assumption:sublevel-set_weak-convexity}, $\underline{d}$ is defined in \cref{eq:minimum-distance},
and $\ell_\mathcal{M}\geq0$ is defined in \cref{assumption:conditions_Jk}.
For the lower approximation $J_k(\cdot)$ from \Cref{assumption:conditions_Jk}, the minimizer $L_{k+1}$ in \cref{eq:L_k+1} satisfies $
    \|L_{k+1}-K_t\|<\underline{d}$ 
     and thus  $ L_{k+1}\in \operatorname{int}(\widetilde{\mathcal{K}}_0)$
    for all $
     k\geq 0$. 
\end{lemma}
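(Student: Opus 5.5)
We argue by induction on $k$, proving $\|L_{k+1}-K_t\|_F<\underline d$ for each $k\ge 0$. Since $K_t\in\mathcal K_0$, the definition \cref{eq:minimum-distance} gives $\mathbb{B}_r(K_t)\subset\operatorname{int}(\widetilde{\mathcal K}_0)$ for every $r<\underline d$. So the distance bound immediately yields $L_{k+1}\in\operatorname{int}(\widetilde{\mathcal K}_0)$.

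The key identity is the optimality condition of the unconstrained strongly convex problem \cref{eq:L_k+1}. It gives $\rho(K_t-L_{k+1})\in\partial J_k(L_{k+1})$, hence
\[
\|L_{k+1}-K_t\|_F=\tfrac1\rho\|S_{k+1}\|_F .
\]
Since $J_k$ is a finite maximum of affine pieces (plus $\mathcal M_k$ in model 3), $\partial J_k(L_{k+1})$ lies in the convex hull of the slopes of the active pieces and $\partial\mathcal M_k(L_{k+1})$. It therefore suffices to bound every slope appearing in $J_k$ by a constant strictly less than $\rho\underline d$.

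The slopes are bounded as follows, assuming inductively that $L_0,\dots,L_k\in\mathbb{B}_{\underline d}(K_t)$ (true for $L_0=K_t$).
\begin{enumerate}
\item For a cut generated at $L_i$, the slope is $G_i=g_i+m(L_i-K_t)$ with $g_i\in\partial J(L_i)$. Because $L_i$ lies in the interior of $\widetilde{\mathcal K}_0$ and $J$ is $\ell$-Lipschitz there, $\|g_i\|_F\le\ell$ by the Fréchet subgradient definition, or via \cref{corollary:Lipschitz} on the ball. Hence $\|G_i\|_F\le \ell+m\underline d$.
\item For the aggregate cut, $S_k=\rho(K_t-L_k)$, so $\|S_k\|_F=\rho\|L_k-K_t\|_F$. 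Feeding this back into the target bound would be circular, so this term needs a separate argument.
\item For $\mathcal M_k$, convexity together with $\ell_{\mathcal M}$-Lipschitzness near $L_{k+1}$ gives subgradients of norm at most $\ell_{\mathcal M}$. This requires $L_{k+1}$ to lie in the region where the Lipschitz bound holds; one can argue by contradiction or by continuity in $\rho$. More simply, a convex function that is $\ell_{\mathcal M}$-Lipschitz on a ball around $K_t$ has, near its restriction, the bound I will use, and I will handle it via the convex-combination argument below.
\end{enumerate}

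The main obstacle is the circularity from the aggregate slope $S_k$, and I would resolve it with a convex-combination estimate. Write $\rho(K_t-L_{k+1})=\theta A+(1-\theta)S_k$, where $A$ is a convex combination of the non-aggregate slopes, so that $\|A\|_F\le\max\{\ell+m\underline d,\ell_{\mathcal M}\}=:c$. By the assumption on $\rho$, $c<\rho\underline d$; indeed $\rho>\ell/\underline d+m$ gives $\ell+m\underline d<\rho\underline d$. Let $r_k:=\rho\|L_k-K_t\|_F$. Then
\[
r_{k+1}\le \theta c+(1-\theta)r_k\le\max\{c,r_k\}.
\]
Starting from $r_1\le\|G_0\|_F\le\ell\le c$, induction gives $r_{k+1}\le c<\rho\underline d$ for all $k$, which is the claim.

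For model 1 no aggregate cut appears, so the same bound follows directly with $\theta=1$. For $\mathcal M_k$, I would note that the subgradient of a convex function that is $\ell_{\mathcal M}$-Lipschitz on $\widetilde{\mathcal K}_0$ is bounded by $\ell_{\mathcal M}$ at interior points. The contradiction/continuity argument keeps $L_{k+1}$ inside the ball, since if $\|L_{k+1}-K_t\|_F\ge \underline d$ the slopes would still be bounded by $c$ along the segment by convexity, giving a contradiction.
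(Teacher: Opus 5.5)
Your proof is correct and follows the paper's argument: induction on $k$, the optimality condition $\rho(K_t-L_{k+1})\in\partial J_k(L_{k+1})$, and a bound strictly below $\rho\underline d$ on every slope in the convex hull. The aggregate slope $S_k$ is not actually circular, since it depends on $L_k$ and the inductive hypothesis $\|L_k-K_t\|_F<\underline d$ already gives $\|S_k\|_F<\rho\underline d$; this is exactly what the paper uses, and your recursion $r_{k+1}\le\max\{c,r_k\}$ only sharpens it to the uniform radius $c/\rho$. You also note that $\|M\|_F\le\ell_{\mathcal M}$ for $M\in\partial\mathcal M_k(L_{k+1})$ presupposes $L_{k+1}$ already lies where $\mathcal M_k$ is Lipschitz; the paper passes over this. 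Your segment argument closes it once stated precisely: with $u$ the unit direction toward $L_{k+1}$, the function $g(s)=J_k(K_t+su)+\frac{\rho}{2}s^2$ is convex with $g'_+(s)\ge -c+\rho s>0$ for every $s\in(c/\rho,\underline d)$, so $s^\star=\|L_{k+1}-K_t\|_F\le c/\rho$.
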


The detailed proof is given in \cref{appendix:proof-feasibility-nullstep}. 
It is worth noting that we can ensure the feasibility of all points $\{L_{k}\}$, generated by the unconstrained minimization \cref{eq:L_k+1} with $J_k$, \textit{without explicitly imposing the nonconvex constraint $L\in\mathcal{K}$.}
This property is not trivial in nonsmooth optimization and is not guaranteed in standard subgradient methods \cite{watanabe2025policy}. 

In the following, we establish the total iteration complexity of the PBM (\Cref{alg:PBM_outerloop}) to find an $(\eta,\epsilon)$-stationary point. Using the double-loop structure, we separately analyze the iteration complexity for the inner loop (\Cref{alg:proxdescent}) and outer loop (\Cref{alg:PBM_outerloop}). The total number of iterations then follows as their product. To establish the complexity results, we prepare the following lemma, an adaptation of \cite[Lemma A.4]{liao2025proximal} for the $\mathcal{H}_\infty$ optimization. This shows that a small gap from the exact PPM leads to a small inexact subgradient guarantee.
{\begin{lemma}\label{lemma:inner-loop-2}
    Suppose \Cref{assumption:stablizability,assumption:K_nonempty,assumption:sublevel-set_weak-convexity,assumption:conditions_Jk} hold.
    Let $K\in\mathcal{K}_0$ and $\rho>\max\{\frac{2\ell}{\underline{d}}-m,0\}$.
    Denote $\hat{K} \! =\! \arg\min_{L} J(L) + \frac{m+\rho}{2}\|L-K\|_F^2$, and $\Delta_K \!=\! J(K) - \min_{L} \{J(L) + \frac{m+\rho}{2}\|L-K\|_F^2\}$. Then we have 
    \begin{align*}
    U=\rho (K-\hat K) \in  \partial_{\Delta_K} J(K).
    \end{align*}
    Moreover, the inexact subgradient $U$ is bounded as $\|U\|_F\leq \sqrt{2\rho \Delta_K}$.
\end{lemma}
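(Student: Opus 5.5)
The plan is to work with the proximal function $\phi(L):=J(L)+\frac{m+\rho}{2}\|L-K\|_F^2$. First I localize its minimizers inside a convex ball around $K$ on which $J$ is $m$-weakly convex. Then I use the resulting $\rho$-strong convexity of $\phi$ on that ball to produce the inexact subgradient inequality of \Cref{def:inexact-subdiff-redefined}.

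\textbf{Step 1 (localization and well-posedness).} By coercivity (\cref{lemma:basic-properties}), $\phi$ attains its minimum over $\mathcal K$. Let $L^\star$ be any minimizer. Since $K$ is feasible, $\phi(L^\star)\le J(K)$, so $J(L^\star)\le J(K)\le J(K_0)$ and both $L^\star$ and $K$ lie in $\mathcal K_0\subset\widetilde{\mathcal K}_0$. The $\ell$-Lipschitz property of $J$ on $\widetilde{\mathcal K}_0$ from \cref{assumption:sublevel-set_weak-convexity} then gives
\[
\tfrac{m+\rho}{2}\|L^\star-K\|_F^2\le J(K)-J(L^\star)\le \ell\|L^\star-K\|_F ,
\]
and hence $\|L^\star-K\|_F\le 2\ell/(m+\rho)<\underline d$, by the choice $\rho>\frac{2\ell}{\underline d}-m$. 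By definition \cref{eq:minimum-distance} of $\underline d$, the ball $\mathbb B_{\underline d}(K)$ lies in $\widetilde{\mathcal K}_0$. It is therefore contained in the largest open ball $\mathcal U_K$ around $K$ on $\operatorname{int}(\widetilde{\mathcal K}_0)$, which is convex. On $\mathcal U_K$, $J$ is $m$-weakly convex, so $\phi$ is $\rho$-strongly convex there. Every minimizer lies in this set, so $\hat K$ is unique and $0\in\partial\phi(\hat K)$.

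\textbf{Step 2 (inexact subgradient).} Strong convexity together with $0\in\partial\phi(\hat K)$ yields, for all $L\in\mathcal U_K$,
\[
\phi(L)\ge\phi(\hat K)+\tfrac{\rho}{2}\|L-\hat K\|_F^2=J(K)-\Delta_K+\tfrac{\rho}{2}\|L-\hat K\|_F^2 .
\]
Next I expand $\|L-\hat K\|_F^2=\|L-K\|_F^2+2\langle L-K,K-\hat K\rangle+\|K-\hat K\|_F^2$ and cancel $\frac{\rho}{2}\|L-K\|_F^2$ against the corresponding part of $\frac{m+\rho}{2}\|L-K\|_F^2$. Dropping the nonnegative term $\frac{\rho}{2}\|K-\hat K\|_F^2$ leaves
\[
J(L)\ge J(K)+\langle U,L-K\rangle-\tfrac m2\|L-K\|_F^2-\Delta_K,
\]
valid for all $L\in\mathcal U_K$. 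This is exactly $U\in\partial_{\Delta_K}J(K)$.

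\textbf{Step 3 (norm bound).} I apply the same strong-convexity inequality at $L=K\in\mathcal U_K$. This gives $J(K)=\phi(K)\ge J(K)-\Delta_K+\frac{\rho}{2}\|K-\hat K\|_F^2$, that is, $\rho^2\|K-\hat K\|_F^2\le 2\rho\Delta_K$. Hence $\|U\|_F\le\sqrt{2\rho\Delta_K}$.

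The main obstacle is Step 1. We must ensure that the global minimizer over the nonconvex set $\mathcal K$ lies in a convex region where weak convexity holds, so that strong convexity and the optimality condition are legitimate. The Lipschitz-based radius bound is exactly what the hypothesis on $\rho$ is designed to deliver. A second point to verify is that the Lipschitz constant $\ell$ applies to the pair $K,L^\star$ without a convex segment between them; this is covered because \cref{assumption:sublevel-set_weak-convexity} states the Lipschitz property on all of $\widetilde{\mathcal K}_0$.
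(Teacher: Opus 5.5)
Your proof is correct and follows essentially the same route as the paper: the same Lipschitz-based localization $\|\hat K-K\|_F\le 2\ell/(m+\rho)<\underline d$, the same expansion around $K$ that drops $\frac{\rho}{2}\|K-\hat K\|_F^2$, and the same strong-convexity bound $\Delta_K\ge\frac{\rho}{2}\|K-\hat K\|_F^2$ for the norm estimate. The only difference is cosmetic: in Step~2 you use the quadratic growth of the $\rho$-strongly convex $\phi$ at its minimizer, whereas the paper applies the weak-convexity subgradient inequality at $\hat K$ with $V=(m+\rho)(K-\hat K)\in\partial J(\hat K)$, and these two inequalities are algebraically identical.
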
}

The proof is presented in \cref{appendix:proof-inner-lemma-2}. Now, we give the following lemma for the inner-loop by extending \cite[Lemma 5.2]{diaz2023optimal}. Combining this lemma with \cref{lemma:inner-loop-2}, we can evaluate the iteration complexity of the inner loop in \Cref{alg:proxdescent}.

\begin{lemma}[Inner-loop complexity]
\label{lemma:innerloop}
 Suppose \Cref{assumption:stablizability,assumption:K_nonempty,assumption:sublevel-set_weak-convexity,assumption:conditions_Jk} hold.
Let $K_t \in \mathcal{K}_0$ and {$\rho >
\max\{ \frac{\ell}{\underline{d}}+m,\frac{\ell_\mathcal{M}}{\underline{d}} ,\frac{2\ell}{\underline{d}}-m\} $}. 
Let $\Delta_{t}$ be the proximal gap defined by
    $$\Delta_{t}
    = J(K_{t})-J_{1/(m+\rho)}(K_t), \quad \text{with} \quad  
    J_{1/(m+\rho)}(K_t):=\min_L
    \left\{
    J(L) + \frac{m+\rho}{2}
    \|L-K_{t}\|_F^2
    \right\}.$$
Then if $\Delta_t\neq0$,
\Cref{alg:proxdescent} 
terminates
in at most $T_{\rm inner}$ iterations with 
\begin{equation} \label{eq:inner-iterations}
    T_{\rm inner}  
    \leq 
    \frac{
    8(\ell+m\underline{d})^2
    }{
    (1-\beta)^2 \rho \Delta_{t}
    }+1,
\end{equation}
\end{lemma}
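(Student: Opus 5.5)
The plan is to follow the standard null-step analysis of proximal bundle methods in \cite[Lemma 5.2]{diaz2023optimal}, applied to the locally convexified function
\[
F(L):=J(L)+\tfrac{m}{2}\|L-K_t\|_F^2 .
\]
By \cref{assumption:sublevel-set_weak-convexity}, $F$ is convex on the ball $\mathbb B_{\underline d}(K_t)\subset\widetilde{\mathcal K}_0$. By \cref{lemma:feasibility_nullstep}, every inner iterate $L_k$ stays in this ball, so \cref{corollary:lower-bound} gives $J_k\le F$ on $\mathcal U_{K_t}$ for all $k$. The proximal subproblem of $F$ with parameter $\rho$ is exactly the $(m+\rho)$-proximal subproblem of $J$. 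Its optimal value is $F^\star:=J_{1/(m+\rho)}(K_t)$, and \cref{lemma:inner-loop-2} ensures its minimizer lies in the ball. I would track the model values
\[
\Phi_k:=\min_L\, J_k(L)+\tfrac\rho2\|L-K_t\|_F^2=J_k(L_{k+1})+\tfrac\rho2\|L_{k+1}-K_t\|_F^2 ,
\]
and the gap $h_k:=F^\star-\Phi_k$. The lower-approximation property gives $\Phi_k\le F^\star$, so $h_k\ge 0$.

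\textbf{Step 1 (progress per null step).} Suppose the test \cref{eq:approx} fails at iteration $k$. Set $\delta_k:=F(L_{k+1})-J_k(L_{k+1})$. Negating \cref{eq:approx} gives $\delta_k>(1-\beta)\bigl(J(K_t)-J_k(L_{k+1})\bigr)$. Since $J_k(L_{k+1})\le\Phi_k$ and $J(K_t)=F^\star+\Delta_t$, this yields
\[
\delta_k\ge(1-\beta)(\Delta_t+h_k).
\]
Next, $J_{k+1}$ contains both the aggregate cut $J_k(L_{k+1})+\langle S_{k+1},\cdot-L_{k+1}\rangle$ and the new cut of $F$ at $L_{k+1}$. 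Minimizing the one-dimensional dual of this two-cut QP (the computation behind \cref{eq:L_analytic}) shows
\[
\Phi_{k+1}-\Phi_k\ \ge\ \frac{\rho\,\delta_k^2}{2\|G_{k+1}-S_{k+1}\|_F^2}.
\]
The aggregate cut alone already has prox-minimum $\Phi_k$, so the new cut only increases the value. This holds for all three models, since each contains the two cuts and extra pieces only raise $\Phi$.

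\textbf{Step 2 (bounding the slopes).} Here I need $\|G_{k+1}-S_{k+1}\|_F\le 2(\ell+m\underline d)$. First, $G_{k+1}=g+m(L_{k+1}-K_t)$ with $g\in\partial J(L_{k+1})$. Then $\|g\|\le\ell$ from \cref{assumption:sublevel-set_weak-convexity} and $\|L_{k+1}-K_t\|<\underline d$ from \cref{lemma:feasibility_nullstep}, so $\|G_{k+1}\|\le\ell+m\underline d$. Second, $S_{k+1}=\rho(K_t-L_{k+1})\in\partial J_k(L_{k+1})$ is a convex combination of earlier cut slopes. An induction over the aggregation gives the same bound for it; for the $\mathcal M_k$ piece one uses $\ell_{\mathcal M}$, which I would absorb into the same constant. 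I expect this step, together with the precise constant, to be the main technical obstacle.

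\textbf{Step 3 (counting).} Set $c:=\rho(1-\beta)^2/(8(\ell+m\underline d)^2)$. Steps 1 and 2 give, for every null step,
\[
h_{k+1}\le h_k-c(\Delta_t+h_k)^2 .
\]
I would split the count into two phases.
\begin{itemize}
\item While $h_k>\Delta_t$, we have $h_{k+1}\le h_k-ch_k^2$, which implies $1/h_{k+1}\ge 1/h_k+c$. So this phase lasts at most about $1/(c\Delta_t)$ steps.
\item While $h_k\le\Delta_t$, each step decreases $h_k$ by at least $c\Delta_t^2$, and $h\ge 0$. So this phase also lasts at most about $1/(c\Delta_t)$ steps.
\end{itemize}
Using the sharper inequality $(\Delta_t+h)^2\ge 4\Delta_t h$ to merge the two phases should give the stated $8(\ell+m\underline d)^2/((1-\beta)^2\rho\Delta_t)$, plus one for the final serious step. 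Hence the test must pass within $T_{\rm inner}$ iterations whenever $\Delta_t\ne0$.
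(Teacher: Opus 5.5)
Your route is the paper's. The paper localizes the trial points (via \cref{lemma:feasibility_nullstep}) and the true proximal point (via $\rho>2\ell/\underline d-m$) to $\mathbb{B}_{\underline d}(K_t)$. It then simply invokes \cite[Lemma 5.2]{diaz2023optimal} with $G=\max_i\|G_i\|_F\le \ell+m\underline d$; you unpack that lemma.

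The unpacking contains a false step. The Step 1 bound $\Phi_{k+1}-\Phi_k\ge \rho\delta_k^2/(2\|G_{k+1}-S_{k+1}\|_F^2)$ holds only when the dual maximizer $\theta^\star=\rho\delta_k/\|G_{k+1}-S_{k+1}\|_F^2$ is at most $1$. For the two-cut model the increase is exactly $\max_{\theta\in[0,1]}\{\theta\delta_k-\frac{\theta^2}{2\rho}\|G_{k+1}-S_{k+1}\|_F^2\}$. When $\theta^\star>1$ the increase is $\delta_k-\|G_{k+1}-S_{k+1}\|_F^2/(2\rho)$, which is strictly below your claim. This is precisely the clipping $\min\{1,\cdot\}$ in \cref{eq:L_analytic}. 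A failed test does not exclude it, and it includes $G_{k+1}=S_{k+1}$, where your expression is undefined. In general you only get $\min\{\rho\delta_k^2/(2\|G_{k+1}-S_{k+1}\|_F^2),\,\delta_k/2\}$.

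The missing ingredient is an a priori bound on the model gap $\tilde\Delta_k:=J(K_t)-\Phi_k=\Delta_t+h_k$. Since $\Phi_k$ is nondecreasing for all three models and $\Phi_0=J(K_t)-\|G_0\|_F^2/(2\rho)$, you have $\tilde\Delta_k\le \ell^2/(2\rho)$. Hence $c\tilde\Delta_k\le(1-\beta)^2/16\le(1-\beta)/2$, so the clipped branch $\delta_k/2\ge\frac{1-\beta}{2}\tilde\Delta_k$ also dominates $c\tilde\Delta_k^2$. Only with this do you obtain $\tilde\Delta_{k+1}\le\tilde\Delta_k-c\tilde\Delta_k^2$ at every null step.

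Second, your counting does not yield the stated constant. Each of your two phases can last up to $1/(c\Delta_t)$ steps, which gives $16(\ell+m\underline d)^2/((1-\beta)^2\rho\Delta_t)$, and the ``merging'' via $(\Delta_t+h)^2\ge 4\Delta_t h$ is left undone. Work with $\tilde\Delta_k$ instead of $h_k$. From $\tilde\Delta_{k+1}\le\tilde\Delta_k(1-c\tilde\Delta_k)$ with $c\tilde\Delta_k<1$ you get $1/\tilde\Delta_{k+1}\ge 1/\tilde\Delta_k+c$. After $N$ consecutive null steps this gives $Nc<1/\tilde\Delta_N\le 1/\Delta_t$, which is exactly \cref{eq:inner-iterations} once the accepting iteration is added.

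Finally, in Step 2 your induction gives $\|S_{k+1}\|_F\le\ell+m\underline d$ for the full-memory and two-cut models. For model 3, however, the aggregate slope can inherit elements of $\partial\mathcal M_k(L_{k+1})$, so the honest constant is $\max\{\ell+m\underline d,\ell_{\mathcal M}\}$. You cannot absorb $\ell_{\mathcal M}$ unless $\ell_{\mathcal M}\le\ell+m\underline d$. The paper's proof makes the same implicit assumption, since it bounds only $\max_i\|G_i\|_F$.
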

\begin{proof}
            Recall that \cref{lemma:feasibility_nullstep}
    ensures $\|L_k-K_t\|_F < \underline{d}$,
    implying that $\{L_k\}$ is contained in the ball $\mathbb{B}_{ \underline{d}}(K_t)$.
    Further, by $\rho> \frac{2\ell}{\underline{d}}-m,$
    the true proximal point
    $\hat K_t\in\argmin_{L}J(L)+\frac{m+\rho}{2}\lVert L-K_t\rVert_F^2$
  satisfies
\begin{equation*}
\frac{m+\rho}{2}\|\hat{K}_t-K_t\|_F^2 \leq J(K_t)-J(\hat{K}_t) \leq \ell \|\hat{K}_t-K_t\|_F,
\end{equation*}
which yields
\begin{equation*}
    \|\hat{K}_t-K_t\|_F \leq \frac{2\ell}{m+\rho}<\underline{d}.
\end{equation*}
Thus the true proximal point and all trial points lie in the common convex ball $\mathbb{B}_{\underline{d}}(K_t)$ on $\widetilde{\mathcal{K}}_0$.
Then, all convexity and minorization inequalities used in the proof of  \cite[Lemma 5.2]{diaz2023optimal} remain valid for the unconstrained problem $\min_{K }\; J(K)+ \frac{m}{2}\|K - K_{t}\|_F^2$
    with the fixed center point $K_{t}$.
Hence the same null-step proof applies locally. 
    In particular, from \cite[Lemma 5.2]{diaz2023optimal}, we know that the number of null steps before \cref{eq:approx} holds  is bounded by $$ T_{\rm inner}\leq \frac{8 G^2}{(1 - \beta)^2 \rho \Delta_{t}},$$
    where $G=\max_{i\leq T}\|G_i\|_F$.
    Finally, \cref{eq:inner-iterations} follows from 
    the Lipschitz continuity of $J(\cdot)$
    and \cref{lemma:feasibility_nullstep},
    as 
    $G\leq \ell +m\sup_{i\leq T}\|L_i-K_t\|_F\leq \ell+m\underline{d}$.
    \end{proof}

Now, combining \cref{lemma:inner-loop-2,lemma:innerloop}, it is not difficult to see that {$\mathcal{O}(1/\Delta_t)=\mathcal{O}\left(
    \max\left\{
    \frac{1}{\eta^2},\frac{1}{\epsilon}
    \right\}
    \right)$ iterations} are required for each inner-loop (\Cref{alg:proxdescent}),
    unless $K_t$ is an
$\left(\eta, \epsilon
    \right)$-stationary point.
Indeed, \Cref{lemma:inner-loop-2,lemma:innerloop} ensure that:
\begin{enumerate}
    \item if 
$\Delta_{t} \leq \sigma:=
\min\left\{
{\eta^2}/{2\rho},\epsilon
\right\}$, then $K_{t}$
    is an
$\left(\eta, \epsilon
    \right)$-stationary point;
    \item if $K_t$ is not 
an
$\left(\eta, \epsilon
    \right)$-stationary point, we must have $\Delta_t > \sigma$, and thus the inner loop takes 
at most $\mathcal{O}(1/\Delta_t)=\mathcal{O}\left(
    \max\left\{
    \frac{1}{\eta^2},\frac{1}{\epsilon}
    \right\}
    \right)$ iterations.
\end{enumerate}

Next, we present the iteration complexity of the outer loop (\Cref{alg:PBM_outerloop}).
Thanks to the descent condition \cref{eq:approx},
we can largely perform a similar analysis strategy based on a telescoping sum to the PPM in \cref{theorem:PPM}.
Similar to the inner loop, we prove that the best gap $\Delta_t$ will eventually become small with $O(1/T)$, leading to
the complexity below.
Its proof is deferred to \cref{subsubsection:proof_outerloop}.
{\begin{lemma}[Outer-loop complexity]\label{lemma:outerloop}
Suppose \Cref{assumption:stablizability,assumption:K_nonempty,assumption:sublevel-set_weak-convexity,assumption:conditions_Jk} hold. Choose $\rho$ as in~\cref{lemma:innerloop}.
Then, 
\Cref{alg:PBM_outerloop} with $K_0 \in \mathcal{K}$ and $\beta\in(0,1)$ 
guarantees that $\{K_t\}\subset \mathcal{K}_0$ and
finds an $(\eta,\epsilon)$-stationary point
after
at most 
\begin{equation*}
    T_{\rm outer}
    =
    \left\lceil
\left(\frac{J(K_0)-J^\star}{\beta}\right)
\max\left\{\frac{2\rho}{\eta^2},\frac{1}{\epsilon}\right\}
\right\rceil
=
    \mathcal{O}
    \left(
    \max\left\{
    \frac{1}{\eta^2},
    \frac{1}{\epsilon}\right\}
    \right)
\end{equation*}
outer iterations.
\end{lemma}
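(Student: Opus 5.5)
We argue by induction on $t$ that $K_t\in\mathcal K_0$, and then telescope a per-step decrease that is proportional to the proximal gap $\Delta_t$. Everything is reduced to the single-step statements already established: \cref{lemma:feasibility_nullstep}, \cref{proposition:PBM-descent}, \cref{lemma:inner-loop-2} and \cref{lemma:innerloop}.

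\textbf{Feasibility.} Suppose $K_t\in\mathcal K_0$ and $K_t$ is not $(\eta,\epsilon)$-stationary. By \cref{lemma:inner-loop-2}, $\rho(K_t-\hat K_t)\in\partial_{\Delta_t}J(K_t)$ and $\|\rho(K_t-\hat K_t)\|_F\le\sqrt{2\rho\Delta_t}$. If $\Delta_t\le\sigma:=\min\{\eta^2/(2\rho),\epsilon\}$, this vector certifies that $K_t$ is $(\eta,\epsilon)$-stationary. Hence we must have $\Delta_t>\sigma>0$.

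By \cref{lemma:innerloop}, \texttt{ProxDescent} then terminates after finitely many steps. By \cref{lemma:feasibility_nullstep}, every trial point satisfies $L_k\in\mathbb B_{\underline d}(K_t)$, so \cref{proposition:PBM-descent} applies to the accepted point. It gives $J(K_{t+1})\le J(K_t)$, and hence $K_{t+1}\in\mathcal K_0$. This closes the induction, starting from $K_0\in\mathcal K_0$.

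\textbf{Quantified descent.} We want $J(K_t)-J(K_{t+1})\ge\beta\Delta_t$. Rearranging the acceptance test \cref{eq:approx} gives
\[
J(K_t)-J(K_{t+1})\;\ge\;J(K_t)-J(K_{t+1})-\tfrac m2\|K_{t+1}-K_t\|_F^2\;\ge\;\beta\bigl(J(K_t)-J_k(L_{k+1})\bigr).
\]
It remains to show $J_k(L_{k+1})\le J_{1/(m+\rho)}(K_t)$. Since $L_{k+1}$ minimizes $J_k+\frac\rho2\|\cdot-K_t\|_F^2$, we have
\[
J_k(L_{k+1})\le J_k(L_{k+1})+\tfrac\rho2\|L_{k+1}-K_t\|_F^2\le J_k(\hat K_t)+\tfrac\rho2\|\hat K_t-K_t\|_F^2 .
\]
The proof of \cref{lemma:innerloop} shows $\|\hat K_t-K_t\|_F\le 2\ell/(m+\rho)<\underline d$, so $\hat K_t\in\mathcal U_{K_t}$. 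The lower-approximation property \cref{corollary:lower-bound} then yields
\[
J_k(\hat K_t)+\tfrac\rho2\|\hat K_t-K_t\|_F^2\le J(\hat K_t)+\tfrac{m+\rho}2\|\hat K_t-K_t\|_F^2=J_{1/(m+\rho)}(K_t).
\]
Combining these bounds gives $J(K_t)-J(K_{t+1})\ge\beta\Delta_t$.

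\textbf{Counting.} Suppose none of $K_0,\dots,K_{T-1}$ is $(\eta,\epsilon)$-stationary. Summing the descent inequality and using $\Delta_t>\sigma$ gives
\[
T\beta\sigma<\sum_{t<T}\beta\Delta_t\le J(K_0)-J(K_T)\le J(K_0)-J^\star .
\]
Thus $T<(J(K_0)-J^\star)/(\beta\sigma)$, and $1/\sigma=\max\{2\rho/\eta^2,1/\epsilon\}$ yields the stated $T_{\rm outer}$.

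The main subtlety is the comparison $J_k(L_{k+1})\le J_{1/(m+\rho)}(K_t)$. The lower model is only valid on the ball $\mathcal U_{K_t}$, so the argument depends on first confirming that $\hat K_t$ lies in that ball. This is exactly where the condition $\rho>2\ell/\underline d-m$ is used.
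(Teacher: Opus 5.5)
Your proposal is correct and follows essentially the same route as the paper. You compare $J_k(L_{k+1})+\frac{\rho}{2}\|L_{k+1}-K_t\|_F^2$ with the true proximal value at $\hat K_t$ (which lies in the local ball), combine this with the acceptance test \cref{eq:approx} to get $J(K_t)-J(K_{t+1})\ge\beta\Delta_t$, telescope, and invoke \cref{lemma:inner-loop-2} with $\sigma=\min\{\eta^2/(2\rho),\epsilon\}$. Your explicit treatment of inner-loop termination (via $\Delta_t>\sigma>0$ and \cref{lemma:innerloop}) and of the invariant $\{K_t\}\subset\mathcal{K}_0$ is slightly more careful than the paper's write-up, but it is the same argument.
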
}

Combining these two lemmas, we obtain the following convergence guarantee.

\begin{theorem}[Non-asymptotic convergence]\label{theorem:main-result}
    Consider the $\mathcal{H}_\infty$ policy optimization \cref{eq:policy-optimization-main}. 
    Suppose \Cref{assumption:stablizability,assumption:K_nonempty,assumption:sublevel-set_weak-convexity,assumption:conditions_Jk} hold.
    Choose $\rho>0$ as in \Cref{lemma:innerloop}, and let $\eta>0,\epsilon > 0$. 
    For the proximal bundle method in \Cref{alg:PBM_outerloop} with the inner-loop \Cref{alg:proxdescent}, we have
    $\{K_t\}\subset\mathcal{K}_0$ in \Cref{alg:PBM_outerloop}
    and 
    $\{L_k\}\subset \operatorname{int}(\widetilde{\mathcal{K}}_0)$ in
    \Cref{alg:proxdescent},
    and we obtain an $(\eta,\epsilon)$-stationary point in at most 
    \begin{equation}\label{eq:order_main_theorem}
   { \bigO 
    \left(
    \max\left\{\frac{1}{\eta^4},\frac{1}{\epsilon^2}\right\}
    \right )}
    \end{equation}
    total iterations.
\end{theorem}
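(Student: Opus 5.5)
The proof assembles the earlier results in three steps: first feasibility, then the outer-loop count, then the inner-loop count, and finally multiplies the two counts. The choice of $\rho$ as in \cref{lemma:innerloop} satisfies the hypotheses of all the lemmas simultaneously. In particular, $\rho>\max\{\ell/\underline d+m,\ \ell_{\mathcal M}/\underline d\}$ (\cref{lemma:feasibility_nullstep}) and $\rho>\max\{2\ell/\underline d-m,0\}$ (\cref{lemma:inner-loop-2}).

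\textbf{Feasibility.} I argue by induction on $t$. Suppose $K_t\in\mathcal K_0$. Then \cref{lemma:feasibility_nullstep} gives $\|L_{k+1}-K_t\|_F<\underline d$ for every inner index $k$, so $L_{k+1}\in\operatorname{int}(\widetilde{\mathcal K}_0)$. This places the whole inner sequence in $\mathbb B_{\underline d}(K_t)$, which is exactly the hypothesis of \cref{corollary:lower-bound} and \cref{proposition:PBM-descent}. When the test \cref{eq:approx} is accepted, \cref{proposition:PBM-descent} yields $J(K_{t+1})\le J(K_t)-\frac m2\|K_{t+1}-K_t\|_F^2\le J(K_0)$, so $K_{t+1}\in\mathcal K_0$. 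This closes the induction and gives $\{K_t\}\subset\mathcal K_0$ and $\{L_k\}\subset\operatorname{int}(\widetilde{\mathcal K}_0)$.

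\textbf{Outer loop.} By \cref{lemma:outerloop}, after at most $T_{\rm outer}=\mathcal O(\max\{\eta^{-2},\epsilon^{-1}\})$ outer iterations some iterate $K_t$ is an $(\eta,\epsilon)$-stationary point. Set $\sigma:=\min\{\eta^2/(2\rho),\epsilon\}$. Before that iterate is reached, every $K_t$ is non-stationary in this sense. The contrapositive of \cref{lemma:inner-loop-2} then gives $\Delta_t>\sigma$: if $\Delta_t\le\sigma$, then $U=\rho(K_t-\hat K_t)\in\partial_{\Delta_t}J(K_t)\subseteq\partial_\epsilon J(K_t)$ and $\|U\|_F\le\sqrt{2\rho\Delta_t}\le\eta$. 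Here I use the monotonicity $\partial_{\Delta}\subseteq\partial_{\epsilon}$ for $\Delta\le\epsilon$, which is immediate from \Cref{def:inexact-subdiff-redefined}.

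\textbf{Inner loop and total count.} Since $\Delta_t>\sigma>0$, \cref{lemma:innerloop} bounds the number of inner iterations at outer step $t$ by
\[
\frac{8(\ell+m\underline d)^2}{(1-\beta)^2\rho\,\sigma}+1=\mathcal O(\max\{\eta^{-2},\epsilon^{-1}\}).
\]
Summing this bound over the at most $T_{\rm outer}$ outer steps gives $\mathcal O(\max\{\eta^{-2},\epsilon^{-1}\}^2)=\mathcal O(\max\{\eta^{-4},\epsilon^{-2}\})$ total iterations, which is \cref{eq:order_main_theorem}. The constants depend only on $\rho,\beta,\ell,m,\underline d$ and $J(K_0)-J^\star$.

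The main point needing care is the bookkeeping at the terminal iterate. The first stationary $K_t$ may have $\Delta_t$ arbitrarily small, possibly $\Delta_t=0$, where \cref{lemma:innerloop} gives no bound. I handle this by counting only the inner loops run at non-stationary centers, all of which have $\Delta_t>\sigma$. The stationary point is already produced as an outer iterate before its own inner loop starts, so the iteration count stops there.
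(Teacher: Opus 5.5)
Your proposal is correct and follows the same route as the paper. You get feasibility from \cref{lemma:feasibility_nullstep} plus the descent property, the outer count from \cref{lemma:outerloop}, and the per-call inner bound from \cref{lemma:inner-loop-2,lemma:innerloop} via $\Delta_t>\sigma$ at non-stationary centers, then multiply the two $\mathcal{O}(\max\{\eta^{-2},\epsilon^{-1}\})$ counts; your explicit treatment of the terminal center, where $\Delta_t$ may vanish, simply makes precise the paper's phrase ``before reaching an $(\eta,\epsilon)$-stationary point.''
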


\begin{proof}
    As argued above, before reaching an $(\eta,\epsilon)$-stationary point, each call of \Cref{alg:proxdescent} takes at most {$\mathcal{O}(1/\Delta_t)=\mathcal{O}\left(
    \max\left\{
    \frac{1}{\eta^2},\frac{1}{\epsilon}
    \right\}
    \right)$ iterations}. From \cref{lemma:outerloop}, the outer-loop takes at most $\mathcal{O}\left(
    \max\left\{
    \frac{1}{\eta^2},\frac{1}{\epsilon}
    \right\}
    \right)$ iterations. Multiplying them together leads to the desired complexity \cref{eq:order_main_theorem} for the total number of iterations. 
\end{proof}

 To our best knowledge, this is the first deterministic, non-asymptotic convergence rate for $\Hinf$ policy optimization with an anytime feasibility guarantee.
By setting $\eta=\Theta(\epsilon)$,
\cref{theorem:main-result} implies $\mathcal{O}(\epsilon^{-4})$ iterations,
which matches the standard deterministic results for unconstrained weakly convex functions \cite{davis2019stochastic,liang2023proximal,liao2025proximal,kong2024cost,li2026optimal}.
\cref{theorem:main-result} can be viewed as an extension of the unconstrained and globally weakly convex counterpart \cite{liao2025proximal} to $\mathcal{H}_\infty$ policy optimization. 
 Our key technical contribution is a more careful treatment of the feasibility of every inner iterate, which is due to the local weak convexity of $J(\cdot)$ and nonconvexity of $\mathcal{K}$.
 Note that if $C=I_{n_x}$, 
 the weak PL condition  (\cref{theorem:weak-PL}) yields a smaller complexity order, particularly for the outer loop; see \Cref{remark:weakPL-convergence}.

\begin{remark}[Comparison with subgradient methods]\label{remark:comparison-subgradient}
For a globally weakly convex and Lipschitz function, the standard subgradient method ensures the convergence rate of $\bigO(T^{-1/4})$ for the gradient of the Moreau envelope using the stepsize $\alpha=\Theta(T^{-1/2})$ \cite{davis2019stochastic}, corresponding to an $\bigO(\epsilon^{-4})$ complexity. This guarantee does not require descent of the
original cost. Indeed, even in the deterministic setting, an explicit
subgradient update may increase the function value. Consequently, when the
objective is defined only on the nonconvex stabilizing set $\mathcal K$, the
standard analysis for subgradient methods needs to assume that every iterate remains feasible
and does not by itself provide an anytime feasibility guarantee \cite{watanabe2025policy}. In contrast,
the PBM accepts an outer update only after the acceptance  test~\eqref{eq:approx},
while \cref{lemma:feasibility_nullstep} keeps all inner iterates in
$\operatorname{int}(\widetilde{\mathcal K}_0)$. Intuitively, increasing $\rho$ plays the same role as decreasing the stepsize, as seen in \eqref{eq:L_analytic}.
Similar to the gradient descent for LQR \cite{fazel2018global,hu2023toward,watanabe2025revisiting},
appropriate control of this parameter enables us to maintain feasibility.
\hfill$\square$
\end{remark}

\begin{remark}[Convergence in the state-feedback case]\label{remark:weakPL-convergence}
When $C=I_{n_x}$, we have shown a weak PL condition in \cref{theorem:weak-PL}.
This inequality not only translates stationarity into global optimality   but also improves the complexity order to $\mathcal{O}(\epsilon^{-3})$. In particular, the outer loop achieves $J(K_T)-J^\star\leq \varepsilon$ after $T=\mathcal{O}(\varepsilon^{-1})$ iterations, while each inner-loop still requires $\mathcal{O}(\varepsilon^{-2})$.
The proof is not very difficult:
for the gap $\Delta_t=J(K_t)-\min_{L}\left\{J(L)+\frac{\rho+m}{2}\lVert L-K_t\rVert_F^2\right\}$ with $K_t\in\mathcal{K}_0$,
one can show 
\begin{equation*}
    \Delta_t \geq c
       \left(J(K_t)-J^\star\right)^2,\qquad
       c=\frac{\rho}{2
      \left(
      \ell + (m+\rho)\mu^{-1}
      \right)^2},
 \end{equation*}
 where $\mu>0$ is a constant associated with the weak PL condition on $\widetilde{\mathcal{K}}_0$.
 Combining this with the proof of \cref{lemma:outerloop} yields
 the recursion
 \begin{equation*}
     \left(J(K_{t+1})- J^\star\right)
     -\left(J(K_t)- J^\star\right)
     \leq -\beta c \left(J(K_t)-J^\star\right)^2,
 \end{equation*}
 which implies
 $  J(K_T)-J^\star \leq 
     \frac{J(K_0)-J^\star}{1+\beta c(J(K_0)-J^\star) T}=\mathcal{O}(T^{-1}).$
 Furthermore, 
 it follows from a similar argument to \cite[Proposition 10.44, Exercise 10.45]{rockafellar2009variational} that
 an $(\eta,\epsilon)$-stationary of $K\in\widetilde{\mathcal{K}}_0$ with $\eta=\Theta(\varepsilon)$
 and $\epsilon=\Theta(\varepsilon^2)$ yields $J(K)-J^\star\leq\varepsilon$,
 which implies that each inner loop
 takes $\mathcal{O}(\varepsilon^{-2})$.
 Consequently,
 under a weak PL condition on $\widetilde{\mathcal{K}}_0$
 we can achieve 
  $J(K)-J^\star\leq\varepsilon$
  after $\mathcal{O}(\varepsilon^{-3})$ iterations.
\hfill$\square$
\end{remark}

\section{Numerical experiments}\label{section:simulations}

This section presents numerical experiments for the PBM.
We first present a simple instance 
for which we can explicitly examine our theoretical results and parameters $m$, $\ell$, and $\rho$. 
We then provide further comparison with two other methods from \cite{watanabe2025policy,guo2023complexity},
showing the effectiveness of the PBM.

\subsection{A simple academic example} \label{subsection:example-1}
   Consider a single-input, single-output system with problem data:
\begin{equation*}
    A=\frac12c^\tr
c=\frac14\begin{bmatrix}1&1\\1&1\end{bmatrix},\qquad
B=c^\tr,\quad B_w=I_2,\quad C=c,
\end{equation*}
where $c=\frac{1}{\sqrt 2}\begin{bmatrix}1&1\end{bmatrix}$, 
and choose $Q=10^{-3}I_2$ and $R=10^{-2}$.  We consider the
static output-feedback control $u_t=ky_t$. 
We can write the $\mathcal{H}_\infty$ cost explicitly as
\begin{equation*}
J(k)=\frac{\sqrt{10^{-3}+10^{-2}k^2}}{1-|1/2+k|}, \quad k \in \mathcal K=(-3/2,1/2).
\end{equation*}
Note that the minimizer is uniquely $k^\star=-1/5$ with
$J^\star=\sqrt{14}/70$.

We initialize the method at $k_0=0$, for which
$J(k_0)=1/(5\sqrt{10})$. 
Then, we can directly compute 
$\mathcal K_0=
\left[1-\frac{\sqrt{21}}{3},\,0\right]$
and
$\widetilde{\mathcal K}_0=
\left[-4+\frac{\sqrt{366}}{6},\,
\frac43-\frac{\sqrt{46}}6\right]$.
It then follows that
$$
\underline{d}=\operatorname{dist}(\mathcal K_0,\partial\widetilde{\mathcal K}_0)
=\frac{4}{3}-\frac{\sqrt{46}}6>\frac{1}{5}.
$$ 
Direct differentiation on differentiable parts gives
$
J''(k)\ge-\frac1{200},\, |J'(k)|\le\frac23,
\, k\in\widetilde{\mathcal K}_0.
$ 
Further, we can verify that $\lim_{t\downarrow-1/2}J^\prime (t)- \lim_{t\uparrow-1/2}J^\prime (t) >0$.

Thus, we take
$m=0.005$ and $\ell=\frac23$.
We run the PBM with $\beta=0.9$, $\rho=8$, and
$m=1/200,\,1/20\,,1/2$.  All three choices are certified
as $\frac{\ell}{\underline{d}}+m<\frac{2\ell}{\underline{d}}-m\leq \frac{4}{3/5}-1/2<\rho=8.$
\cref{fig:sim-1_cost-decay,fig:sim-1-grad} plot the
objective gap and the exact stationarity measure against the total number
of iterations, including all inner-loop evaluations,
together with the standard subgradient method (SM)
\begin{equation}\label{subGM}
    K_{t+1} = K_t -\alpha G_t,\qquad G_t\in\partial J(K_t)
    \tag{SM}
\end{equation}
for $\alpha=0.1$.
Here, the flat regions on the plot represent the null steps. From this result, we see the convergence toward
stationarity, as well as the global optimality.
In particular, the best performance is observed for the PBM with the least conservative $m=0.005$.

\begin{figure*}
\begin{subfigure}{.48\textwidth}
  \centering
  \includegraphics[width=\columnwidth]{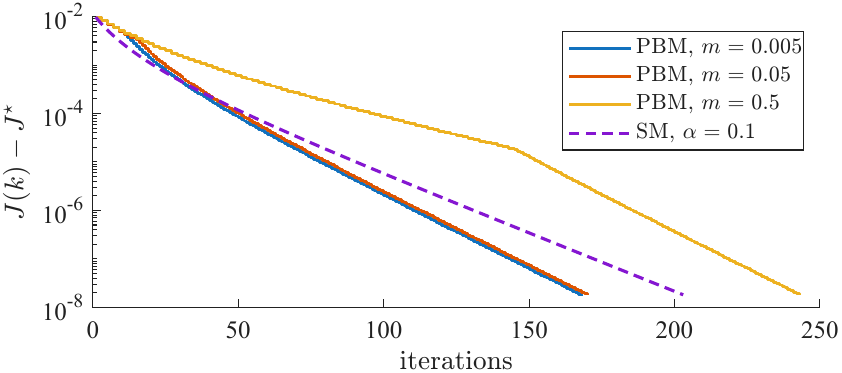}
\caption{Plot of $J(k)-J^\star$
}
    \label{fig:sim-1_cost-decay}
\end{subfigure}
\begin{subfigure}{.48\textwidth}
  \centering
\includegraphics[width=\columnwidth]{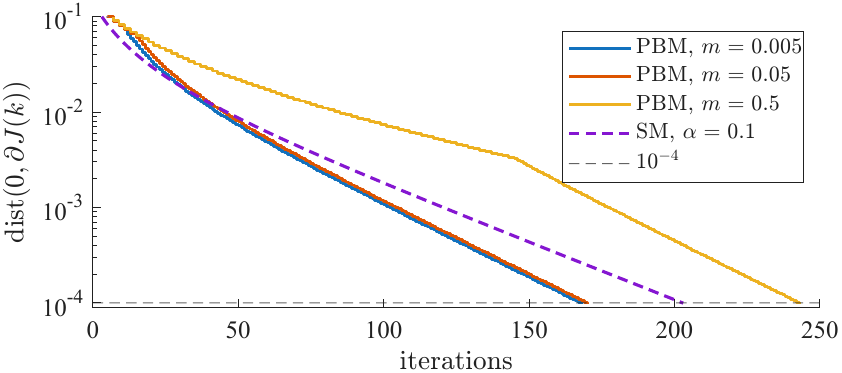}
\caption{Plot of $\mathrm{dist}(0,\partial J(k))$}
    \label{fig:sim-1-grad}
\end{subfigure}%
\caption{
The simulation results in \cref{subsection:example-1}. The iteration count includes the null steps.
}
\label{fig:hinf_plot}

\end{figure*}

\subsection{Additional experiments}
We test the empirical performance of the PBM for two different examples with partial observations, an open-loop unstable system \cite{garcia2001stabilization} (\cref{example:unstable}) and an aircraft model \cite{valadbeigi2019h} (\cref{example:aircraft}).
For comparison with the PBM, we employ the standard subgradient method in \cref{subGM},
which does not have a feasibility guarantee \cite{watanabe2025policy},
and the zeroth-order gradient method via the randomized smoothing method (RSM) in \cite[Algorithm 1]{guo2023complexity}.
We also compare these methods in the state feedback case by setting $C=I_{n_x}$, 
where the optimal value $J^\star$ can be computed by the Riccati-based MATLAB solver \texttt{hinfsyn}.

The algorithmic parameters are given as follows. For the PBM, we set $m=2$, $\rho=20$, and $\beta=0.5$, and
use the two-cut model in \cref{assumption:conditions_Jk} for constructing the lower approximation $J_k$, which admits the analytic solution \cref{eq:L_analytic} and allows for efficient computation.
For the SM, set the step size as $\alpha=0.002$. At each iteration, a subgradient is computed by using \cref{proposition:subgradient}. Finally, for the RSM \cite[Algorithm 1]{guo2023complexity}, we define the stepsize $\eta$ and the smoothing parameter $\delta$ in the same way as \cite[Appendix C]{guo2023complexity}, i.e., as $(\eta,\delta)=(0.001,0.0001)$. 

\begin{system}[Open-loop unstable example]\label{example:unstable}
As an open-loop unstable system
consider the following problem data from \cite{garcia2001stabilization}:
\begin{equation*}
    A=
    \begin{bmatrix}
0.5 & 0 & 0.2 & 1 \\
0 & -0.3 & 0 & 0.1 \\
0.01 & 0.1 & -0.5 & 0 \\
0.1 & 0 & -0.1 & -1
\end{bmatrix},\quad B=
\begin{bmatrix}
1 & 0 \\
0 & 1 \\
0 & 0 \\
-1 & 0
\end{bmatrix}, \quad C=
\begin{bmatrix}
    1 &0& 0& 1\\
    1 & 0 & 1 & 1
\end{bmatrix}.
\end{equation*}
Let $B_w=Q=I_4$ and $R= 0.1I_2$.
As the initial point, set
$$K_0 = -
\begin{bmatrix}
0.717 & 0.283\\
1.588 & -1.488
\end{bmatrix},$$
which is stabilizing \cite{garcia2001stabilization}.
We plot the simulation results in \cref{fig:JK_SOF_unstable,fig:JK_SF_unstable},
where we use $K_0C$ as the initial point in the state feedback case.
We see that the PBM exhibits the most favorable performance.
Particularly, it returns a near-optimal solution in the state feedback case, with faster convergence than the other methods.
\hfill$\square$
\end{system}

\begin{figure}
\begin{subfigure}{0.48\columnwidth}
  \centering
\includegraphics[width=0.7\columnwidth]{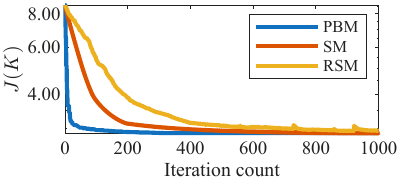}
\caption{\cref{example:unstable} $(C\neq I_{4})$
}
\label{fig:JK_SOF_unstable}
\end{subfigure}%
\begin{subfigure}{0.48\columnwidth}
  \centering
\includegraphics[width=0.7\columnwidth]{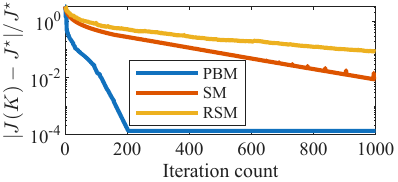}
\caption{\cref{example:unstable} ($C=I_4$)}
\label{fig:JK_SF_unstable}
\end{subfigure}
 \\
\begin{subfigure}{0.48\columnwidth}
  \centering
\includegraphics[width=0.7\columnwidth]{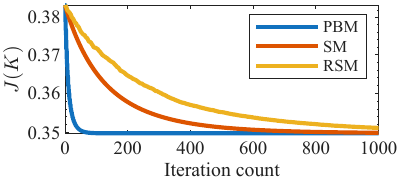}
\caption{\cref{example:aircraft} $(C\neq I_{3})$}
\label{fig:JK_SOF_ac}
\end{subfigure}%
\begin{subfigure}{0.48\columnwidth}
  \centering
\includegraphics[width=0.7\columnwidth]{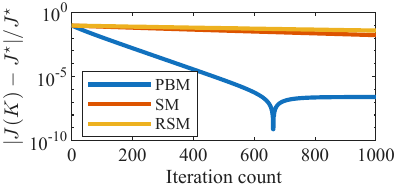}
\caption{\cref{example:aircraft} ($C=I_3$)}
\label{fig:JK_SF_ac}
\end{subfigure}
\vspace{2mm}
\caption{
Simulation results in \cref{example:aircraft,example:unstable}.
The PBM, SM, RSM stand for the proximal bundle method (\Cref{alg:PBM_outerloop}), subgradient method \cite{watanabe2025policy}, and randomized smoothing method \cite{guo2023complexity}, respectively.
Note that the iteration count includes the null steps, which correspond to the flat parts in the plots.
}
\label{fig:unstable}
\end{figure}

\begin{system}[Aircraft example]\label{example:aircraft}
Finally, consider an F-16 aircraft system given in \cite{valadbeigi2019h}:
\begin{equation*}
    \begin{aligned}
A  &=\begin{bmatrix}
0.906488 & 0.0816012 & -0.0005 \\
0.0741349 & 0.90121 & -0.000708383 \\
0 & 0 & 0.132655
\end{bmatrix},\quad 
B  =
\begin{bmatrix}
-0.00150808 \\
-0.0096 \\
0.867345
\end{bmatrix}, \\
C &=\left[\begin{array}{ccc}
0.5 & 1.6 & 0.4 \\
-1.26 & -0.9788 & 0.4852
\end{array}\right],\quad
B_w=\begin{bmatrix}
0.00951892 \\
0.00038373 \\
0 
\end{bmatrix}
\end{aligned}
\end{equation*}
with $Q=I_3$ and $R=0.1$.
In this system, \cref{assumption:stablizability} is not satisfied for $B_w$ as it is not of full row rank.
As the system is open-loop stable, we use $K_0=0$ as the initial point.
We plot the simulation results in \cref{fig:JK_SOF_ac,fig:JK_SF_ac}, where the latter shows the state feedback case $C=I_3$.
From these figures, we see that the PBM achieves the best performance not only in the convergence speed but also in accuracy.
In particular,
we obtain a near optimal solution in the state feedback case, as shown in \cref{fig:JK_SF_ac}.
Further, the PBM does not increase the function value,
while other methods, particularly the RSM, do not always generate a descent direction.
\hfill$\square$
\end{system}

\vspace{-3mm}
\section{Conclusion}\label{section:conclusion}

This paper studied policy optimization for discrete-time robust $\mathcal{H}_\infty$ control with static output-feedback. We established the weak convexity of the $\mathcal{H}_\infty$ cost on convex subsets of sublevel sets and derived an explicit characterization of its Fr\'echet subdifferential. In the state-feedback case, we further proved a weak PL inequality, which guarantees the global optimality of stationary points. Building on these properties, we developed a proximal bundle method with deterministic non-asymptotic convergence guarantees. In particular, the proposed method preserves stability for every inner- and outer-loop iterate and finds an $(\eta,\epsilon)$-stationary point within $\mathcal{O}(\max\{\eta^{-4},\epsilon^{-2}\})$ iterations. Numerical experiments illustrated the effectiveness of the proposed method. Future work includes extensions of the analysis and algorithm to broader classes of robust control problems.

\crefalias{section}{appendix}
\crefalias{subsection}{appendix}
\crefalias{subsubsection}{appendix}

\addcontentsline{toc}{section}{References}
{
\bibliographystyle{ieeetr}
\bibliography{ref.bib}

@inproceedings{tang2023global,
  title={On the Global Optimality of Direct Policy Search for Nonsmooth $\mathcal{H}_\infty$ Output-Feedback Control},
  author={Tang, Yujie and Zheng, Yang},
  booktitle={2023 62nd IEEE Conference on Decision and Control (CDC)},
  pages={6148--6153},
  year={2023},
  organization={IEEE}
}

@article{davis2019stochastic,
  title={Stochastic model-based minimization of weakly convex functions},
  author={Davis, Damek and Drusvyatskiy, Dmitriy},
  journal={SIAM Journal on Optimization},
  volume={29},
  number={1},
  pages={207--239},
  year={2019},
  publisher={SIAM}
}

@book{zhou1996robust,
  title={Robust and Optimal Control},
  author={Zhou, Kemin and Doyle, John C. and Glover, Keith},
  year={1996},
  publisher={Prentice Hall}
}

@inproceedings{fazel2018global,
  title={Global convergence of policy gradient methods for the linear quadratic regulator},
  author={Fazel, Maryam and Ge, Rong and Kakade, Sham and Mesbahi, Mehran},
  booktitle={International Conference on Machine Learning},
  pages={1467--1476},
  year={2018},
  organization={PMLR}
}

@article{burke2005robust,
  title={A robust gradient sampling algorithm for nonsmooth, nonconvex optimization},
  author={Burke, James V and Lewis, Adrian S and Overton, Michael L},
  journal={SIAM Journal on Optimization},
  volume={15},
  number={3},
  pages={751--779},
  year={2005},
  publisher={SIAM}
}

@ARTICLE{zheng2020equivalence,
  author={Y. {Zheng} and L. {Furieri} and A. {Papachristodoulou} and N. {Li} and M. {Kamgarpour}},
  journal={IEEE Transactions on Automatic Control}, 
  title={On the Equivalence of {Youla}, System-level and Input-output Parameterizations}, 
   year={2021},
  volume={66},
  number={1},
  pages={413-420},
  doi={10.1109/TAC.2020.2979785}}

@article{furieri2019input,
  title={An Input–Output Parametrization of Stabilizing Controllers: amidst {Youla} and System Level Synthesis},
  author={Furieri, Luca and Zheng, Yang and Papachristodoulou, Antonis and Kamgarpour, Maryam},
  journal={IEEE Control Systems Letters},
  volume={3},
  number={4},
  pages={1014--1019},
  year={2019},
  publisher={IEEE}
}

@InProceedings{zheng2025ECL,
  title = 	 {Extended Convex Lifting for Policy Optimization of Optimal and Robust Control},
  author =       {Zheng, Yang and Pai, Chih-Fan and Tang, Yujie},
  booktitle = 	 {Proceedings of the 7th Annual Learning for Dynamics $\&$ Control Conference},
  pages = 	 {392--404},
  year = 	 {2025},
  editor = 	 {Ozay, Necmiye and Balzano, Laura and Panagou, Dimitra and Abate, Alessandro},
  volume = 	 {283},
  series = 	 {Proceedings of Machine Learning Research},
  month = 	 {04--06 Jun},
  publisher =    {PMLR},
  url = 	 {https://proceedings.mlr.press/v283/zheng25a.html}
}

@article{zheng2022system,
  title={System-level, input-output and new parameterizations of stabilizing controllers, and their numerical computation},
  author={Zheng, Yang and Furieri, Luca and Kamgarpour, Maryam and Li, Na},
  journal={Automatica},
  volume={140},
  pages={110211},
  year={2022},
  publisher={Elsevier}
}

@article{youla1976modern,
  title={{Modern Wiener-Hopf design of optimal controllers--Part II: The multivariable case}},
  author={Youla, Dante and Jabr, Hamid and Bongiorno, Jr},
  journal={IEEE Transactions on Automatic Control},
  volume={21},
  number={3},
  pages={319--338},
  year={1976},
  publisher={IEEE}
}

@article{wang2019system,
  title={A system-level approach to controller synthesis},
  author={Wang, Yuh-Shyang and Matni, Nikolai and Doyle, John C.},
  journal={IEEE Transactions on Automatic Control},
  volume={64},
  number={10},
  pages={4079--4093},
  year={2019},
  publisher={IEEE}
}

@article{rantzer1996kalman,
  title={On the {Kalman--Yakubovich--Popov} lemma},
  author={Rantzer, Anders},
  journal={Systems \& Control Letters},
  volume={28},
  number={1},
  pages={7--10},
  year={1996},
  publisher={Elsevier}
}

@inproceedings{liao2024error,
  title={{Error bounds, PL condition, and quadratic growth for weakly convex functions, and linear convergences of proximal point methods}},
  author={Liao, Feng-Yi and Ding, Lijun and Zheng, Yang},
  booktitle={6th Annual Learning for Dynamics \& Control Conference},
  pages={993--1005},
  year={2024},
  organization={PMLR}
}

@book{boyd1994linear,
  title={Linear Matrix Inequalities in System and Control Theory},
  author={Boyd, Stephen P. and El Ghaoui,  Laurent and Feron,Eric and Balakrishnan, Venkataramanan},
  year={1994},
  publisher={SIAM}
}

@article{umenberger2022globally,
  title={Globally convergent policy search for output estimation},
  author={Umenberger, Jack and Simchowitz, Max and Perdomo, Juan and Zhang, Kaiqing and Tedrake, Russ},
  journal={Advances in Neural Information Processing Systems},
  volume={35},
  pages={22778--22790},
  year={2022}
}

@article{hu2023toward,
  title={Toward a theoretical foundation of policy optimization for learning control policies},
  author={Hu, Bin and Zhang, Kaiqing and Li, Na and Mesbahi, Mehran and Fazel, Maryam and Ba{\c{s}}ar, Tamer},
  journal={Annual Review of Control, Robotics, and Autonomous Systems},
  volume={6},
  number={1},
  pages={123--158},
  year={2023},
  publisher={Annual Reviews}
}

@inproceedings{mohammadi2019global,
  title={Global exponential convergence of gradient methods over the nonconvex landscape of the linear quadratic regulator},
  author={Mohammadi, Hesameddin and Zare, Armin and Soltanolkotabi, Mahdi and Jovanovi{\'c}, Mihailo R},
  booktitle={2019 IEEE 58th Conference on Decision and Control (CDC)},
  pages={7474--7479},
  year={2019},
  organization={IEEE}
}

@article{fatkhullin2021optimizing,
  title={Optimizing static linear feedback: Gradient method},
  author={Fatkhullin, Ilyas and Polyak, Boris},
  journal={SIAM Journal on Control and Optimization},
  volume={59},
  number={5},
  pages={3887--3911},
  year={2021},
  publisher={SIAM}
}

@article{zheng2024benign,
   author={Zheng, Yang and Pai, Chih-Fan Rich and Tang, Yujie},
  journal={IEEE Transactions on Automatic Control}, 
  title={Benign Nonconvex Landscapes in Optimal and Robust Control, Part II: Extended Convex Lifting}, 
  year={2026},
  volume={71},
  number={8},
  pages={5350-5365},
  doi={10.1109/TAC.2026.3675545}
}

@article{tang2023analysis,
  title={Analysis of the optimization landscape of Linear Quadratic {Gaussian} {(LQG)} control},
  author={Tang, Yujie and Zheng, Yang and Li, Na},
  journal={Mathematical Programming},
  volume={202},
  number={1},
  pages={399--444},
  year={2023},
  publisher={Springer}
}

@article{zheng2023benign,
  author={Zheng, Yang and Pai, Chih-Fan Rich and Tang, Yujie},
  journal={IEEE Transactions on Automatic Control}, 
  title={Benign Nonconvex Landscapes in Optimal and Robust Control, Part I: Global Optimality}, 
  year={2026},
  volume={71},
  number={8},
  pages={5334-5349},
  doi={10.1109/TAC.2026.3675544}
}

@article{talebi2024policy,
  title={Policy Optimization in Control: Geometry and Algorithmic Implications},
  author={Talebi, Shahriar and Zheng, Yang and Kraisler, Spencer and Li, Na and Mesbahi, Mehran},
  journal={arXiv preprint arXiv:2406.04243},
  year={2024}
}

@article{zhang2019policy,
  title={Policy optimization provably converges to {Nash} equilibria in zero-sum linear quadratic games},
  author={Zhang, Kaiqing and Yang, Zhuoran and Ba{\c{s}}ar, Tamer},
  journal={Advances in Neural Information Processing Systems},
  volume={32},
  year={2019}
}

@article{bu2019lqr,
  title={{LQR} through the lens of first order methods: Discrete-time case},
  author={Bu, Jingjing and Mesbahi, Afshin and Fazel, Maryam and Mesbahi, Mehran},
  journal={arXiv preprint arXiv:1907.08921},
  year={2019}
}

@article{levine1970determination,
  title={On the determination of the optimal constant output feedback gains for linear multivariable systems},
  author={Levine, William and Athans, Michael},
  journal={IEEE Transactions on Automatic Control},
  volume={15},
  number={1},
  pages={44--48},
  year={1970},
  publisher={IEEE}
}

@article{gahinet1994linear,
  title={A linear matrix inequality approach to {$H_\infty$} control},
  author={Gahinet, Pascal and Apkarian, Pierre},
  journal={International Journal of Robust and Nonlinear Control},
  volume={4},
  number={4},
  pages={421--448},
  year={1994},
  publisher={Wiley Online Library}
}

@article{zhang2020policy,
  title={Policy optimization for {$H_2$} linear control with {$H_\infty$} robustness guarantee: Implicit regularization and global convergence},
  author={Zhang, Kaiqing and Hu, Bin and Ba{\c{s}}ar, Tamer},
  journal={SIAM Journal on Control and Optimization},
  volume={59},
  number={6},
  pages={4081--4109},
  year={2021},
  publisher={SIAM}
}

@article{guo2022global,
  title={Global Convergence of Direct Policy Search for State-Feedback $\mathcal{H}_\infty$ Robust Control: A Revisit of Nonsmooth Synthesis with {Goldstein} Subdifferential},
  author={Guo, Xingang and Hu, Bin},
  journal={Advances in Neural Information Processing Systems},
  volume={35},
  pages={32801--32815},
  year={2022}
}

@article{watanabe2025revisiting,
  title={Revisiting strong duality, hidden convexity, and gradient dominance in the linear quadratic regulator},
  author={Watanabe, Yuto and Zheng, Yang},
  journal={SIAM Journal on Control and Optimization},
  volume={64},
  number={4},
  pages={2539--2568},
  year={2026},
  publisher={SIAM}
}

@article{scherer1997multiobjective,
  title={Multiobjective output-feedback control via {LMI} optimization},
  author={Scherer, Carsten and Gahinet, Pascal and Chilali, Mahmoud},
  journal={IEEE Transactions on Automatic Control},
  volume={42},
  number={7},
  pages={896--911},
  year={1997},
  publisher={IEEE}
}

@incollection{noll2013bundle,
  title={Bundle method for non-convex minimization with inexact subgradients and function values},
  author={Noll, Dominikus},
  booktitle={Computational and Analytical Mathematics: In Honor of Jonathan Borwein's 60th Birthday},
  pages={555--592},
  year={2013},
  publisher={Springer}
}

@article{apkarian2006nonsmooth,
  title={Nonsmooth {$H_\infty$} synthesis},
  author={Apkarian, Pierre and Noll, Dominikus},
  journal={IEEE Transactions on Automatic Control},
  volume={51},
  number={1},
  pages={71--86},
  year={2006},
  publisher={IEEE}
}

@article{guo2023complexity,
  title={Complexity of Derivative-Free Policy Optimization for Structured $\mathcal{H}_\infty$ Control},
  author={Guo, Xingang and Keivan, Darioush and Dullerud, Geir and Seiler, Peter and Hu, Bin},
  journal={Advances in Neural Information Processing Systems},
  volume={36},
  pages={5050--5078},
  year={2023}
}

@book{rockafellar2009variational,
  title={Variational analysis},
  author={Rockafellar, R Tyrrell and Wets, Roger J-B},
  volume={317},
  year={2009},
  publisher={Springer Science \& Business Media}
}

@article{diaz2023optimal,
  title={Optimal convergence rates for the proximal bundle method},
  author={D{\'\i}az, Mateo and Grimmer, Benjamin},
  journal={SIAM Journal on Optimization},
  volume={33},
  number={2},
  pages={424--454},
  year={2023},
  publisher={SIAM}
}

@article{liang2023proximal,
  title={Proximal bundle methods for hybrid weakly convex composite optimization problems},
  author={Liang, Jiaming and Monteiro, Renato DC and Zhang, Honghao},
  journal={arXiv preprint arXiv:2303.14896},
  year={2023}
}

@book{hiriart2013convex,
  title={Convex Analysis and Minimization Algorithms II: Advanced Theory and Bundle Methods},
  author={Hiriart-Urruty, J.-B. and Lemar{\'e}chal, C.},
  isbn={9783662064092},
  series={Grundlehren der mathematischen Wissenschaften},
  url={https://books.google.com/books?id=QinsCAAAQBAJ},
  year={2013},
  publisher={Springer Berlin Heidelberg}
}

@article{masubuchi1998lmi,
  title={{LMI}-based controller synthesis: a unified formulation and solution},
  author={Masubuchi, Izumi and Ohara, Atsumi and Suda, Nobuhide},
  journal={International Journal of Robust and Nonlinear Control: IFAC-Affiliated Journal},
  volume={8},
  number={8},
  pages={669--686},
  year={1998},
  publisher={Wiley Online Library}
}

@article{apkarian2009proximity,
  title={A proximity control algorithm to minimize nonsmooth and nonconvex semi-infinite maximum eigenvalue functions},
  author={Apkarian, Pierre and Noll, Dominikus and Prot, Olivier},
  journal={Journal of Convex Analysis},
  volume={16},
  number={3-4},
  pages={641--666},
  year={2009}
}

@article{saeki2006fixed,
  title={Fixed structure {PID} controller design for standard {$H_\infty$} control problem},
  author={Saeki, Masami},
  journal={Automatica},
  volume={42},
  number={1},
  pages={93--100},
  year={2006},
  publisher={Elsevier}
}

@article{burke2006hifoo,
  title={{HIFOO}-a {MATLAB} package for fixed-order controller design and {$\mathcal{H}_\infty$} optimization},
  author={Burke, James V and Henrion, Didier and Lewis, Adrian S and Overton, Micheal L},
  journal={IFAC Proceedings Volumes},
  volume={39},
  number={9},
  pages={339--344},
  year={2006},
  publisher={Elsevier}
}

@article{apkarian2017h,
  title={The {$H_\infty$} control problem is solved},
  author={Apkarian, Pierre and Noll, Dominikus},
  journal={Aerospace Lab},
  number={13},
  pages={1–11},
  year={2017}
}

@article{liao2025proximal,
  title={A Proximal Descent Method for Minimizing Weakly Convex Optimization},
  author={Liao, Feng-Yi and Zheng, Yang},
  journal={arXiv preprint arXiv:2509.02804},
  year={2025}
}

@inproceedings{doyle1988state,
  title={State-space solutions to standard {$H_2$} and {$H_\infty$} control problems},
  author={Doyle, John C. and Glover, Keith and Khargonekar, Pramod and Francis, Bruce},
  booktitle={1988 American Control Conference},
  pages={1691--1696},
  year={1988},
  organization={IEEE}
}

@inproceedings{watanabe2025policy,
  title={Policy optimization in robust control: Weak convexity and subgradient methods},
  author={Watanabe, Yuto and Liao, Feng-Yi and Zheng, Yang},
  booktitle={2026 American Control Conference (ACC)},
  pages={1329--1335},
  year={2026},
  organization={IEEE}
}

@article{watanabe2026gradient,
  title={Gradient Dominance in the Linear Quadratic Regulator: A Unified Analysis for Continuous-Time and Discrete-Time Systems},
  author={Watanabe, Yuto and Zheng, Yang},
  journal={arXiv preprint arXiv:2602.22577},
  year={2026}
}

@article{valadbeigi2019h,
 author={Valadbeigi, Amir Parviz and Sedigh, Ali Khaki and Lewis, F. L.},
  journal={IEEE Transactions on Neural Networks and Learning Systems}, 
  title={ {$H_{\infty}$ } Static Output-Feedback Control Design for Discrete-Time Systems Using Reinforcement Learning}, 
  year={2020},
  volume={31},
  number={2},
  pages={396-406},
  doi={10.1109/TNNLS.2019.2901889}
}

@article{garcia2001stabilization,
  title={Stabilization of discrete time linear systems by static output feedback},
  author={Garcia, Germain and Pradin, Bernard and Zeng, Fanyou},
  journal={IEEE Transactions on Automatic Control},
  volume={46},
  number={12},
  pages={1954--1958},
  year={2001},
  publisher={IEEE}
}

@book{clarke1998nonsmooth,
  title={Nonsmooth analysis and control theory},
  author={Clarke, Frank H and Ledyaev, Yu S and Stern, Ronald J and Wolenski, RR},
  year={1998},
  publisher={Springer}
}

@article{blondel1997np,
  title={{NP}-hardness of some linear control design problems},
  author={Blondel, Vincent and Tsitsiklis, John N},
  journal={SIAM Journal on Control and Optimization},
  volume={35},
  number={6},
  pages={2118--2127},
  year={1997},
  publisher={SIAM}
}

@article{lewis2007nonsmooth,
  title={Nonsmooth optimization and robust control},
  author={Lewis, Adrian S},
  journal={Annual Reviews in Control},
  volume={31},
  number={2},
  pages={167--177},
  year={2007},
  publisher={Elsevier}
}

@incollection{lemarechal1981bundle,
  title={On a bundle algorithm for nonsmooth optimization},
  author={Lemar{\'e}chal, Claude and Strodiot, Jean-Jacques and Bihain, Andr{\'e}},
  booktitle={Nonlinear programming 4},
  pages={245--282},
  year={1981},
  publisher={Elsevier}
}

@article{kiwiel1990proximity,
  title={Proximity control in bundle methods for convex nondifferentiable minimization},
  author={Kiwiel, Krzysztof C},
  journal={Mathematical programming},
  volume={46},
  number={1},
  pages={105--122},
  year={1990},
  publisher={Springer}
}

@article{hare2010redistributed,
  title={A redistributed proximal bundle method for nonconvex optimization},
  author={Hare, Warren and Sagastiz{\'a}bal, Claudia},
  journal={SIAM Journal on Optimization},
  volume={20},
  number={5},
  pages={2442--2473},
  year={2010},
  publisher={SIAM}
}

@article{kiwiel2000efficiency,
  title={Efficiency of proximal bundle methods},
  author={Kiwiel, Krzysztof C},
  journal={Journal of Optimization Theory and Applications},
  volume={104},
  number={3},
  pages={589--603},
  year={2000},
  publisher={Springer}
}

@inproceedings{furieri2020learning,
  title={Learning the globally optimal distributed {LQ} regulator},
  author={Furieri, Luca and Zheng, Yang and Kamgarpour, Maryam},
  booktitle={Learning for Dynamics and Control},
  pages={287--297},
  year={2020},
  organization={PMLR}
}

@article{kong2024cost,
  title={The cost of nonconvexity in deterministic nonsmooth optimization},
  author={Kong, Siyu and Lewis, Adrian S},
  journal={Mathematics of Operations Research},
  volume={49},
  number={4},
  pages={2385--2401},
  year={2024},
  publisher={INFORMS}
}

@article{pai2026policy,
  title={Policy Optimization of Mixed {$\mathcal{H}_2/\mathcal{H}_\infty$} Control: Benign Nonconvexity and Global Optimality},
  author={Pai, Chih-Fan and Watanabe, Yuto and Tang, Yujie and Zheng, Yang},
  journal={arXiv preprint arXiv:2603.04843},
  year={2026}
}

@article{kiwiel1995approximations,
  title={Approximations in proximal bundle methods and decomposition of convex programs},
  author={Kiwiel, Krzysztof C},
  journal={Journal of Optimization Theory and applications},
  volume={84},
  number={3},
  pages={529--548},
  year={1995},
  publisher={Springer}
}

@article{hare2009computing,
  title={Computing proximal points of nonconvex functions},
  author={Hare, Warren and Sagastiz{\'a}bal, Claudia},
  journal={Mathematical Programming},
  volume={116},
  number={1},
  pages={221--258},
  year={2009},
  publisher={Springer}
}

@article{wang2026zeroth,
  title={Zeroth-Order Nonsmooth Nonconvex Optimization with Convex Liftings and Its Application to State-Feedback {$ \mathcal{H}_\infty$} Policy Optimization},
  author={Wang, Xuhao and Tang, Yujie},
  journal={arXiv preprint arXiv:2608.23178},
  year={2026}
}

@article{li2026optimal,
  title={Optimal Deterministic Oracle Complexity for Weakly Convex Optimization},
  author={Li, Jiajin and Pan, Siyu},
  journal={arXiv preprint arXiv:2608.03246},
  year={2026}
}
}

\newpage
\appendix

\section{Technical proofs in \cref{section:analysis-2}}

\subsection{Proof of \cref{proposition:subgradient}}\label{subsection:proof-subgradient}

Now, for $t>0$
and a fixed $\theta\in\Theta$,
adding a small displacement $tdK$ yields
\begin{align*}
&
\varphi(\theta,K+tdK)
=\mathrm{Re}
\left[v^\her 
\begin{bmatrix}
Q^{1/2}\\
R^{1/2}(K+tdK)C
\end{bmatrix}(e^{j\omega }I-(A+BKC+tBdKC))^{-1}B_wu
\right] \\
=&
\mathrm{Re}
\left[v^\her 
\left(\begin{bmatrix}
Q^{1/2}\\
R^{1/2}KC
\end{bmatrix}
+
t\begin{bmatrix}
0\\
R^{1/2}dKC
\end{bmatrix}
\right)
(e^{j\omega }I-(A+BKC)-tBdKC)^{-1}B_wu
\right]\\
=&
\mathrm{Re}
\left[v^\her 
\left(\begin{bmatrix}
Q^{1/2}\\
R^{1/2}KC
\end{bmatrix}
+
t\begin{bmatrix}
0\\
R^{1/2}dKC
\end{bmatrix}
\right)
\left(I- 
t(e^{j\omega }I-(A+BKC))^{-1}BdKC
\right)^{-1}
(e^{j\omega }I-(A+BKC))^{-1}
B_wu
\right]\\
=&
\mathrm{Re}
\left[v^\her 
\left(\begin{bmatrix}
Q^{1/2}\\
R^{1/2}KC
\end{bmatrix}
+
t\begin{bmatrix}
0\\
R^{1/2}dKC
\end{bmatrix}
\right)
\left(I + t(e^{j\omega }I-(A+BKC))^{-1}BdKC
\right)
(e^{j\omega }I-(A+BKC))^{-1}
B_wu
\right]\\
&+ O(t^2) \\
=&
\varphi(\theta,K)
+ 
t\mathrm{Re}
\left[v^\her 
\begin{bmatrix}
0\\
R^{1/2}dKC
\end{bmatrix}
(e^{j\omega }I-(A+BKC))^{-1}
B_wu
\right]\\
&
+t\mathrm{Re}
\left[v^\her 
\begin{bmatrix}
Q^{1/2}\\
R^{1/2}KC
\end{bmatrix}
(e^{j\omega }I-(A+BKC))^{-1}
BdKC(e^{j\omega }I-(A+BKC))^{-1}
B_wu
\right]
+O(t^2)\\
=&
\varphi(\theta,K)
+ t
\mathrm{tr}
\left(
\mathrm{Re}\left[C(e^{j\omega }I-(A+BKC))^{-1}B_wuv^\her
\begin{bmatrix}
    0\\
    R^{1/2}
\end{bmatrix}\right]dK
\right)\\
&+t 
\mathrm{tr}
\left(
\mathrm{Re}\left[
C(e^{j\omega }I-(A+BKC))^{-1}
B_wu
v^\her 
\begin{bmatrix}
Q^{1/2}\\
R^{1/2}KC
\end{bmatrix}
(e^{j\omega }I-(A+BKC))^{-1}
B\right]dK
\right)
+ O(t^2).
\end{align*}
Therefore,
using $\Gamma_{(K,\omega)} = (e^{j\omega }I-(A+BKC))^{-1}$,
we obtain
\begin{align}\label{eq:one_subgradient}
\nabla_K \varphi(\theta,K)
= 
\mathrm{Re}&\left[
C\Gamma_{(K,\omega)} B_wuv^\her
\left(
\begin{bmatrix}
    0\\
    R^{1/2}
\end{bmatrix} 
+
\begin{bmatrix}
Q^{1/2}\\
R^{1/2}KC
\end{bmatrix}
\Gamma_{(K,\omega)} B
\right)\right]^\tr.
\end{align}
Therefore, by \cite[Theorem 10.31]{rockafellar2009variational},
\begin{align*}
\partial J(K)
=\operatorname{conv} \left\{
\mathrm{Re}\left[
C\Gamma_{(K,\omega)} B_wuv^\her
\left(
\begin{bmatrix}
    0\\
    R^{1/2}
\end{bmatrix} 
+
\begin{bmatrix}
Q^{1/2}\\
R^{1/2}KC
\end{bmatrix}
\Gamma_{(K,\omega)} B
\right)\right]^\tr
\middle|
(u,v,\omega)\in\Theta_0(K)
\right\}
\end{align*}
where
\begin{equation*}
  \Theta_0(K)
  = \{\theta\in\Theta\mid
  \varphi(\theta,K)=J(K)
  \}.
\end{equation*}

\begin{remark}[Subgradients in continuous time]
For the continuous-time systems, the $\Hinf$ cost is given by
\begin{equation*}
   J_\mathrm{c}(K)= \|\mathbf{T}_{zw}^\mathrm{c}(K)\|_{\Hinf}
    = \max_{\omega\in [0,\infty]}
    \sigma_{\max} (\mathbf{T}_{zw}^\mathrm{c}(K,\omega)),
\end{equation*}
where $\mathbf{T}_{zw}^\mathrm{c}(K,\omega)
    = 
    [(Q^{1/2})^\tr, (R^{1/2}KC)^{\tr}]^\tr
    \left(
    j\omega I- (A+BKC)
    \right)^{-1}B_w$.
This also admits an explicit subgradient thanks to the lower-$C^2$ property \cite{noll2013bundle}.
By replacing $e^{j\omega}$ by $j\omega$ in \cref{proposition:subgradient}, it is not difficult to see that 
\begin{align*}
&\partial J_\mathrm{c}(K)
=\operatorname{conv} \left\{
\mathrm{Re}\left[
C\Gamma_{(K,\omega)}^\mathrm{c} B_wuv^\her
\left(
\begin{bmatrix}
    0\\
    R^{1/2}
\end{bmatrix} 
+
\begin{bmatrix}
Q^{1/2}\\
R^{1/2}KC
\end{bmatrix}
\Gamma_{(K,\omega)}^\mathrm{c} B
\right)\right]^\tr
\middle|
(u,v,\omega)\in\Theta_0^\mathrm{c}(K)
\right\}
\end{align*}
with
$\Gamma_{(K,\omega)}^\mathrm{c} = ({j\omega}I -(A+BKC))^{-1}$ and
\begin{align}
\label{eq:Theta_K}
  &\Theta_0^\mathrm{c}(K)
  = \{\theta=(\omega,u,v)\in\Theta^\mathrm{c}\mid
  \varphi^\mathrm{c}(\theta,K)=J_\mathrm{c}(K)
  \},\\
\label{eq:f_theta_K}
  &\varphi^\mathrm{c}(\theta,K)
:= \mathrm{Re}
\left[v^\her 
\begin{bmatrix}
Q^{1/2}\\
R^{1/2}KC
\end{bmatrix}({j\omega }I-(A+BKC))^{-1}B_wu
\right],
\end{align}
where
$\Theta^\mathrm{c}=
[0,\infty]\times
\{u \in \mathbb{C}^{n_w} : \|u\|=1\} \times \{v \in \mathbb{C}^{n_x+n_u} : \|v\|=1\}$.
\hfill$\square$
\end{remark}

\subsection{Proof of \cref{lemma:weak-PL-lft}}\label{subsection:proof-weakPL-lft}

When $K$ is optimal, the inequality \cref{eq:weak-PL-lft-lemma} is obvious,
and thus we assume that $K\in\mathcal{K}$ is not optimal in the following.
Let 
$(\gamma,P)$ satisfy
$(K,\gamma,P)\in \mathcal{L}_\mathrm{lft}$.
By the convexity of $J_\mathrm{cvx}$ and
$J_\mathrm{lft}=J_\mathrm{cvx}\circ\Upsilon$, we have
\begin{align}
\label{eq:convexity_J_cvx}
\begin{aligned}
    J_\mathrm{lft}(K,\gamma,P)-J^\star=
    (J_\mathrm{cvx}\circ \Upsilon)
    (K,\gamma,P)- J^\star
    \leq& 
    \langle W,
    \Upsilon(K,\gamma,P)  
    -\Upsilon(K^\star,J^\star,P^\star)
    \rangle\\
\leq&
\|W\|_F
    \times \|\Upsilon(K,\gamma,P)    -\Upsilon(K^\star,J^\star,P^\star)\|_F,
    \end{aligned}
\end{align}
where 
$W$ is any subgradient of $J_\mathrm{cvx}$ at $\Upsilon(K,\gamma,P)$, i.e.,
$W\in \partial J_\mathrm{cvx}
    \left(
    \Upsilon(K,\gamma,P)
    \right)$, and
$(K^\star,J^\star,P^\star)$ represents an optimal triplet.

For notational simplicity, let us denote  $\Pi=\Upsilon^{-1}$, and
 $\Pi(\gamma,Y,X) = (YX^{-1},\gamma,\gamma X^{-1})$.
Now, by the chain rule 
\cite[Theorem 10.6]{rockafellar2009variational}\footnote{Note that $J_\mathrm{cvx}$ is lower semicontinuous at any $(\gamma,Y,X)\in\mathcal{F}_\mathrm{cvx}$.}
for $J_\mathrm{cvx} = J_\mathrm{lft}\circ \Pi$
with vectorization,
it holds for any $G=(G_K,G_\gamma,G_P)\in \partial J_\mathrm{lft}(K,\gamma,P)$
that
there exists a subgradient $W=(W_\gamma,W_Y,W_X)\in\partial J_\mathrm{cvx}(\Upsilon(K,\gamma,P))$ such that
\begin{equation*}
    \begin{bmatrix}
        \operatorname{vec}(W_\gamma)\\
        \operatorname{vec}(W_Y)\\
        \operatorname{vec}(W_X)
    \end{bmatrix}
    = \Gamma(\Upsilon(K,\gamma,P))^\tr 
     \begin{bmatrix}
        \operatorname{vec}(G_K)\\
        \operatorname{vec}(G_\gamma)\\
        \operatorname{vec}(G_P),
    \end{bmatrix}
\end{equation*}
where $ \Gamma(\gamma,Y,X)$ for $(\gamma,Y,X)\in\mathcal{F}_\mathrm{cvx}$ is the following matrix corresponding to the Jacobian of $\Pi$ (with suitable vectorization):
\begin{equation*}
    \Gamma(\gamma,Y,X)=
    \begin{bmatrix}
        0 & X^{-1}\otimes I_{n_u} & -(X^{-1}\otimes YX^{-1}) \\
        1 & 0 & 0\\
        \operatorname{vec}(X^{-1}) & 0 & -\gamma(X^{-1}\otimes X^{-1}).
    \end{bmatrix}
\end{equation*}
One can compute the chain rule with the Jacobian in the same manner as our prior work \cite[Lem. 6]{watanabe2026gradient}.
Note that this matrix $\Gamma$ is invertible for $(\gamma,Y,X)\in\mathcal{F}_\mathrm{cvx}$
as $\Upsilon$ is a diffeomorphism.

Consequently, invoking \cref{eq:convexity_J_cvx} 
we can set
\begin{align*}
\beta_{(K,\gamma,P)}
=
\sigma_\mathrm{max}
\left( \Gamma
\left(
\Upsilon(K,\gamma,P)
    \right)\right)
    \times
        \left\|\Upsilon(K,\gamma,P)
    -\Upsilon(K^\star,J^\star,P^\star)\right\|_F>0
\end{align*}
and obtain
\begin{align}\label{eq:weak_PL-lft}
 J_\mathrm{lft}(K,\gamma,P)-J^\star
 \leq
\|G\|_F
 \times
 \beta_{(K,\gamma,P)},
\end{align}
where
$G$ is any subgradient in 
$\partial
J_\mathrm{lft}(K,\gamma,P)$.

Note that $\beta_{(K,\gamma,P)}$ is positive for $(K,\gamma,P)\in \mathcal{L}_\mathrm{lft}$,
since $\Gamma
\left(
\Upsilon(K,\gamma,P)
\right)$ is invertible and $K\neq K^\star$.
By the arbitrariness of $G$, we obtain \cref{eq:weak-PL-lft-lemma}.

\subsection{Proof of \Cref{lemma:partial-minimization}} \label{appendix:partial-minimization}

It suffices to confirm
  that we can apply the classical result \cite[Theorem 10.13]{rockafellar2009variational}.
  Recall that $J_\mathrm{lft}(K,\gamma,P) = \gamma+\delta_{\mathcal{L}_\mathrm{lft}}(K,\gamma,P)$.
  Notice that
  $ (\gamma_K,P_K)\in \arg\min_{\gamma, P} J_\mathrm{lft}(K, \gamma, P)$ is guaranteed to exist
  from
  the non-strict bounded real lemma (\cref{proposition:BRL}).
  The constraint with the inequality \cref{eq:BRL} is closed relative to the open domain $\gamma>0$ and $P\succ0$, and 
  $J_\mathrm{lft}$ is lower semicontinuous at $(K,\gamma_K,P_K)$.
  Note that
  there is no feasible sequence approaching $P\nsucc0$ and $\gamma=0$, since \cref{eq:BRL} implies $P\succ0$ and $\gamma^2\geq\lambda_{\max}(B_w^\tr QB_w)>0$

  We next verify the level-boundedness required for \cite[Theorem 10.13]{rockafellar2009variational}.
  In particular, we show
  for any $\nu\in\mathbb{R}$, the set
\begin{equation*}
\mathcal{L}_{\mathrm{lft},\nu}
=
\{(K,\gamma,P)\mid J_{\rm lft}(K,\gamma,P)\leq \nu\}
\end{equation*}
is bounded.
While this follows from \cref{lemma:lifted_set-compactness} (\cite[Proposition 3]{wang2026zeroth}),
we prove the boundedness for completeness.
Taking the principal submatrix associated with the
first, third, fourth, and fifth block rows of \cref{eq:BLR-five-block} and applying the Schur
complement yields
\begin{equation*}
P-A_K^\tr P A_K-Q-K^\tr RK\succeq0.
\end{equation*}
Consequently,
\begin{equation*}
P\succeq Q+K^\tr RK\succeq Q\succ0.
\end{equation*}
Similarly, the principal submatrix associated with the second and
third block rows gives
\begin{equation*}
\begin{bmatrix}
\gamma^2I & B_w^\tr P\\
PB_w & P
\end{bmatrix}\succeq0,
\end{equation*}
and hence $B_w^\tr P B_w\preceq\gamma^2I.$
Since $B_w$ has full row rank and $B_wB_w^\tr \succ0$, let $B_w^\dagger:=B_w^\tr (B_wB_w^\tr )^{-1}$.
We then obtain
\begin{equation*}
P
=(B_w^\dagger)^\tr B_w^\tr P B_wB_w^\dagger
\preceq
\gamma^2(B_wB_w^\tr )^{-1}
\preceq
\nu^2(B_wB_w^\tr )^{-1}.
\end{equation*}
Thus, $P$ is uniformly bounded on $\mathcal{L}_{\mathrm{lft},\nu}$. Moreover,
using $K^\tr RK\preceq P$, we obtain
\[
\lambda_{\min}(R)\|K\|_F^2
\leq
\operatorname{tr}(P)
\leq
\nu^2\operatorname{tr}\bigl((B_wB_w^\tr )^{-1}\bigr),
\]
which uniformly bounds $K$. Since $\gamma\leq\nu$, the level set
$\mathcal{L}_{\mathrm{lft},\nu}$ is always bounded.
Consequently,
\cite[Theorem 10.13]{rockafellar2009variational} 
yields
\begin{equation}\label{eq:proof-weakPL-rockafellar}
    \partial J(K)
     \subseteq 
     \!\!\!
     \bigcup_{(\gamma_K,P_K)\in  {\arg\min}_{\gamma,P} 
     J_{\mathrm{lft}}(K,\gamma,P)
     }
     \mathcal{M}(\gamma_K,P_K),
\end{equation}
where $\mathcal{M}(\gamma,P)
    =\left\{
    G\middle|
    (G,0,0)\in
    \partial J_\mathrm{lft}(K,\gamma,P)
    \right\}.$
Therefore, we obtain the desired result.

\begin{remark}
While
\cite[Theorem 10.13]{rockafellar2009variational} 
in general ensures \cref{eq:proof-weakPL-rockafellar}
for the so-called \textit{(general) subdifferential}, not for the Fr\'echet subdifferential,
the inclusion \cref{eq:proof-weakPL-rockafellar} is valid
in
our setup for the following reason:
In the classical reference \cite[Definition 8.3]{rockafellar2009variational},
the Fr\'echet subdifferential 
is referred to as the \textit{regular subdifferential} (denoted by $\widehat{\partial}f(x)$),
and the (general) subdifferential (denoted by $\partial f(x)$) is defined as $\partial f(x) = \{v\mid \text{there are sequences $x_k\to x$ and $v_k\in \widehat{\partial}f(x_k)$ with $v_k\to v$} \}$.
While they are different and satisfy
$\widehat{\partial}f(x)\subset \partial f(x)$ in general \cite[Theorem 8.6]{rockafellar2009variational}, 
we know they coincide (i.e., $\widehat{\partial}f(x)= \partial f(x)$) when $f$ is subdifferentially regular (see \cite[Corollary 8.11]{rockafellar2009variational}).
Recalling \cref{fact:lower_C2}, the cost $J$ is subdifferentially regular, which validates \cref{eq:proof-weakPL-rockafellar}.
\hfill$\square$
\end{remark}
\section{Technical proofs in \cref{section:algorithm}}

\subsection{Proof of \cref{lemma:feasibility_nullstep}}\label{appendix:proof-feasibility-nullstep}

We prove the claim by induction.
When $k=0$, 
we have 
\begin{equation*}
    L_1=\argmin_L
    J(K_t) + \langle G_0,L-K_t\rangle
    +\frac{\rho}{2}\|L-K_t\|_F^2
    =K_t - \frac{1}{\rho}G_0,
\end{equation*}
where $G_0\in\partial J(K_t)$.
Thus, by 
$\|L_1-K_t\|_F
\leq \frac{\ell}{\rho}
<\underline{d}
$, 
where
$K_t\in \mathcal{K}_0$,
and $\underline{d}:= \min_{K\in \mathcal{K}_0}\mathrm{dist}(K,\partial \widetilde{\mathcal{K}}_0)$,
we obtain $ L_1\in\operatorname{int}(\widetilde{\mathcal{K}}_0)$.

Assuming that
$\|L_i-K_t\|_F< \underline{d}$ for $i=0,\ldots k$
for induction,
we shall show 
$\|L_{k+1}-K_t\|_F< \underline{d}$
as follows.
It follows from the optimality condition in \cref{eq:L_k+1} that
\begin{equation*}
    0\in \rho( L_{k+1}-K_t)
    + \partial J_{k}( L_{k+1}).
\end{equation*}
By \cref{assumption:conditions_Jk}, we know that
\begin{align*}
    &\partial J_{k}(L_{k+1})
    \subseteq
    \operatorname{conv}
    \{G_{0},\ldots,G_k\},\quad
    \partial J_{k}(L_{k+1})
    \subseteq
    \operatorname{conv}
    \{G_{k},S_k\},\quad\\
    \text{or}\quad&
    \partial J_{k}(L_{k+1})
    \subseteq
    \operatorname{conv}
    \{
    \{G_k\}\cup\{S_k\}\cup\partial \mathcal{M}_k(L_{k+1})
    \}
\end{align*}
for the full-memory model, two-cut model, and the more general model, respectively.
Notice that
 $\|G_i\|_F< \ell+m\underline{d} $
by $G_i-m(L_i-K_t)\in\partial J(L_i)$, $\|S_k\|_F = \rho\|K_t-L_{k}\|_F< \rho \underline{d}$ (see \cref{assumption:conditions_Jk}), 
and $\mathcal{M}_k(\cdot)$ is
$\ell_\mathcal{M}$-Lipschitz continuous.
Thus,
we obtain $ \| L_{k+1}-K_t\|_F <\underline{d}$ for all the models by
$-(L_{k+1}-K_t)\in \frac{1}{\rho}\partial J_k(L_{k+1})$ and
\begin{align*}
   & \frac{1}{\rho}\|G_i\|_F
    <
    \frac{1}{\rho}\times
     (\ell +m\underline{d})
    <\underline{d},\quad \forall i=0,\ldots,k.\\
      &\frac{1}{\rho} \|S_k\|_F 
    <
    \frac{1}{\rho}\times
    \rho \underline{d}
    \leq \underline{d},\\
   &
    \frac{1}{\rho}\|M\|_F
    \leq
    \frac{1}{\rho}
     \ell_\mathcal{M}
    <\underline{d},\quad \forall M\in \partial \mathcal{M}_k(L_{k+1}),
\end{align*} 
where we have used $\rho > \max\{m+\frac{\ell}{\underline{d}},\frac{\ell_\mathcal{M}}{\underline{d}}\}$.
By $K_t\in\mathcal{K}_0$, this yields
$ L_{k+1} \in \operatorname{int}(\widetilde{\mathcal{K}}_0)$, which completes the proof.

\subsection{Proof of \cref{lemma:inner-loop-2}}\label{appendix:proof-inner-lemma-2}

Let $d_K=K-\hat K$, $V=(m+\rho)d_K$, $U=\rho d_K$, and
\begin{equation*}
\Delta_K=J(K)-J(\hat K)-\frac{m+\rho}{2}\lVert d_K\rVert_F^2.
\end{equation*}
We have $\hat K\in\mathcal{K}_0$ and
$\|d_K\|_F=\|K-\hat K\|_F<\underline{d}$, since
\begin{equation*}
    \|d_K\|_F^2
    \leq \frac{2}{m+\rho}(J(K)-J(\hat K))
    \leq \frac{2\ell}{m+\rho}\|d_K\|
    <\underline{d}\|d_K\|_F,
\end{equation*}
where the existence of $\hat K$ is guaranteed by the coercivity (\cref{lemma:basic-properties}).
For $L$ in the largest ball centered around $K$ on which weak convexity holds,
\begin{equation*}
J(L)\ge J(\hat K)+\langle V,L-\hat K\rangle-
\frac{m}{2}\lVert L-\hat K\rVert_F^2.
\end{equation*}
Expanding around $K$ gives 
\begin{equation*}
J(L)+\frac{m}{2}\lVert L-K\rVert_F^2
\ge J(K)+\langle U,L-K\rangle-\Delta_K+
\frac{\rho}{2}\lVert d_K\rVert_F^2.
\end{equation*}
Dropping the last nonnegative term proves $U\in\partial_{\Delta_K}J(K)$ relative to this ball. 

Now, the function
\begin{equation*}
H(L)=J(L)+\frac{m+\rho}{2}\lVert L-K\rVert_F^2
\end{equation*}
is $\rho$-strongly convex on the ball and $0\in\partial H(\hat K)$. Hence
\begin{equation*}
\Delta_K=H(K)-H(\hat K)\ge\frac{\rho}{2}\lVert K-\hat K\rVert_F^2,
\end{equation*}
and therefore $\lVert U\rVert_F\le\sqrt{2\rho\Delta_K}.$
    
\subsection{Proof of 
\cref{lemma:outerloop}}\label{subsubsection:proof_outerloop}
For an outer center $K_t$, define
$h_t(L)=J(L)+\frac{m}{2}\lVert L-K_t\rVert_F^2.$
Then,
\begin{equation*}
\Delta_t=J(K_t)-\min_{L}\left\{h_t(L)+\frac{\rho}{2}\lVert L-K_t\rVert_F^2\right\}.
\end{equation*}
Let $L_{k+1}$ be the accepted inner candidate and let $J_k$ be its bundle model. Since the true proximal point lies in the local ball and $J_k\le h_t$, we have
\begin{align*}
J_k(L_{k+1})+\frac{\rho}{2}\lVert L_{k+1}-K_t\rVert_F^2
&\le J_k(\hat K_t)+\frac{\rho}{2}\lVert\hat K_t-K_t\rVert_F^2\\
&\le J(\hat K_t)+\frac{m+\rho}{2}\lVert\hat K_t-K_t\rVert_F^2\\
&=J(K_t)-\Delta_t.
\end{align*}
Therefore
\begin{equation*}
J(K_t)-J_k(L_{k+1})\ge\Delta_t.
\end{equation*}
The acceptance test gives
\begin{equation*}
J(K_t)-\left(J(L_{k+1})+\frac{m}{2}\lVert L_{k+1}-K_t\rVert_F^2\right)
\ge\beta\bigl(J(K_t)-J_k(L_{k+1})\bigr)\ge\beta\Delta_t.
\end{equation*}
With $K_{t+1}=L_{k+1}$,
\begin{equation}\label{eq:J-Delta-proof}
J(K_t)-J(K_{t+1})\ge\beta\Delta_t.
\end{equation}
Consequently, the telescoping sum for this inequality gives
\begin{equation*}
\sum_{t=0}^{T-1}\Delta_t\le\frac{J(K_0)-J^\star}{\beta}.
\end{equation*}
Set
\begin{equation*}
\sigma=\min\left\{\frac{\eta^2}{2\rho},\epsilon\right\}.
\end{equation*}
By \cref{lemma:inner-loop-2}, $\min_{t\in\{0,\ldots T-1\}}\Delta_t\le\sigma$ implies that $K_{t^*}$ 
for some
$ t^*\in\{0,\ldots T-1\}$
is an $(\eta,\epsilon)$-stationary point. If $\min_{t\in\{0,\ldots T-1\}}\Delta_t>\sigma$, we obtain
\begin{equation*}
T\le\left\lceil\frac{J(K_0)-J^\star}{\beta\sigma}\right\rceil
= 
\left\lceil
\left(\frac{J(K_0)-J^\star}{\beta}\right)
\max\left\{\frac{2\rho}{\eta^2},\frac{1}{\epsilon}\right\}
\right\rceil.
\end{equation*}

\end{document}